\newif\iflongversion\longversionfalse 
\newif\ifoldversion\oldversionfalse 

\documentclass[11pt]{article}

\usepackage{amsmath}
\usepackage{amssymb}

\usepackage[shortlabels]{enumitem}%
\setlist[enumerate,1]{label={\rm (\arabic*)}}
\setlist[enumerate]{itemsep=0pt}
\usepackage[pdftex,usenames,x11names]{xcolor}%
\usepackage[pdftex,bookmarksnumbered=true,backref]{hyperref}
\usepackage[pdftex]{graphicx,color}
\usepackage[a4paper]{geometry}

\title{Defining integer valued functions in rings of continuous definable
functions over a topological field}

\author{Luck Darni\`ere%
  \and
  Marcus Tressl
}

%
\reversemarginpar

\newcounter{numtodo}
\setcounter{numtodo}{1}
\newcommand{\TODO}[3]{%
  {\bigskip\bgroup\parindent=0pt\setlength{\parskip}{\smallskipamount}%
    \colorbox{#3}{\color{white}\bf #1~\thenumtodo}\enskip%
    \color{#3} #2}\stepcounter{numtodo}\egroup\bigskip}


\usepackage{ntheorem}
\theoremnumbering{Alph}
\newtheorem{thm-Alph}{Theorem}
\newtheorem*{prop-nonum}{Proposition}

\newtheorem{lemma}[subsection]{Lemma}
\newtheorem{proposition}[subsection]{Proposition}
\newtheorem{property}[subsection]{Property}
\newtheorem{theorem}[subsection]{Theorem}
\newtheorem{corollary}[subsection]{Corollary}
\newenvironment{remark}%
   {\refstepcounter{subsection}%
        \medbreak\noindent{\bf Remark \thesubsection\space}}%
   {\par\medbreak}%
   {\refstepcounter{subsection}%
        \medbreak\noindent{\bf Observation \thesubsection\space}}%
   {\par\medbreak}%
   {\refstepcounter{subsection}%
        \medbreak\noindent{\bf Example \thesubsection\space}}%
   {\par\medbreak}%
\newenvironment{examples}
   {\refstepcounter{subsection}%
        \medbreak\noindent{\bf Examples \thesubsection\space}}%
   {\par\medbreak}%

%

\newenvironment{definition}%
   {\refstepcounter{subsection}%
        \medbreak\noindent{\bf Definition \thesubsection\space}}%
   {\par\medbreak}%

%

\newenvironment{proof}%
   {\medbreak\noindent{\it Proof:\space}}%
   {\QED}


\sloppy
%
%
\newcommand{\df}{\bf}
\renewcommand{\cal}{\mathcal}
\renewcommand{\leq}{\leqslant}
\renewcommand{\geq}{\geqslant}
\let\sauvegardetiret=\-
\renewcommand{\-}[1]{\ifx#1-\penalty10000\hbox{-\relax}\penalty10000\else\sauvegardetiret#1\fi}
\newcommand{\tq}{\colon}
\newcommand{\st}{\ \vert \ }
\newcommand{\mal}{\!\cdot\!}
\newcommand{\eps}{\varepsilon}
\def\phi{\varphi}
\newcommand{\Spec}{\Sp}
\newcommand\qed{\nolinebreak\hspace*{\fill}\text{$\square$}}
\newcommand\QED{\qed}

\newcommand{\Dm}{{\mathfrak{m}}}

\newcommand{\NN}{{\mathbb N}}
\newcommand{\ZZ}{{\mathbb Z}}
\newcommand{\QQ}{{\mathbb Q}}
\newcommand{\RR}{{\mathbb R}}
\newcommand{\CC}{{\mathbb C}}
\newcommand{\C}{{\CC}}
%
%
\newcommand{\cB}{{\cal B}}

\newcommand{\cK}{{\cal K}}
\newcommand{\cL}{{\cal L}}

\newcommand{\cO}{{\cal O}}

\newcommand{\cZ}{{\cal Z}}
\newcommand{\lra}{\longrightarrow}
\newcommand{\0}{\emptyset}
\newcommand\OldStart{\color{gray}[BEGIN OLD VERSION]}
\newcommand\OldEnd{[END OLD VERSION]\color{black}}
\newcommand\LongStart{\color{teal}[BEGIN LONG VERSION]}
\newcommand\LongEnd{[END LONG VERSION]\color{black}}
\newcommand{\Lring}{{\cal L}_{\rm ring}}
\newcommand{\Loag}{{\cal L}_{\rm og}}
\newcommand{\CF}{{\cal C}}
\newcommand{\ZS}{{\cal V}}

\newcommand{\Sp}{\mathop{\rm Spec}}

\newcommand{\Jac}{\mathop{\rm Jac}}

\newcommand{\FORM}[1]{\mbox{\rm #1}}
\newcommand{\aInt}{\FORM{Int}^+_\preccurlyeq}
\newcommand{\mInt}{\FORM{Int}^\times}
\newcommand{\alimch}{\FORM{LimCh}^+_\preccurlyeq}
\newcommand{\mlimch}{\FORM{LimCh}^\times}
\newcommand{\alimBch}{\FORM{LimBCh}^+_\preccurlyeq}
\newcommand{\mlimBch}{\FORM{LimBCh}^\times}
\newcommand{\limit}{\FORM{Limit}}
\newcommand{\inter}{\FORM{Inter}}
\newcommand{\isol}{\FORM{Isol}}
\newcommand{\point}{\FORM{Point}}

\newenvironment{listD}%
  {\begin{list}{\bf(Dim\theenumi)}{\usecounter{enumi}}}%
  {\end{list}}


\begin{document}

\maketitle

\begin{abstract}
\noindent
  Let $\cK$ be an expansion of either an ordered field $(K,\leq)$, or a
  valued field $(K,v)$. Given a definable set $X\subseteq K^m$ let $\CF(X)$ be
  the ring of continuous definable functions from $X$ to $K$.
  Under very mild assumptions on the geometry of $X$
  and on the structure $\cK$, in particular when $\cK$ is
  $o$\--minimal or $P$\--minimal, or an expansion of a local field,
  we prove that the ring of integers
  $\ZZ$ is interpretable in $\CF(X)$.
  If $\cK$ is $o$-minimal and $X$ is definably connected
  of pure dimension $\geq2$, then $\CF(X)$ defines the subring $\ZZ$.
  If $\cK$ is $P$-minimal and $X$ has no isolated points, then
    there is a discrete ring
  ${\cal Z}$ contained in $K$ and naturally isomorphic to $\ZZ$, such that the ring of functions
  $f\in\CF(X)$ which take values in ${\cal Z}$ is definable in $\CF(X)$.
\end{abstract}

\tableofcontents

\section{Introduction}

We give a first order definition of the ring of integers
within rings of continuous functions that are first order definable in expansions of ordered and valued fields.
Before describing a more technical outline of the contents, we explain the context of the article and the results
in a colloquial way.

Rings of continuous functions on topological spaces are central objects in functional analysis, topology and geometry. To name an example: They are  rings of sections for the sheaf of continuous (say, real valued)  functions on a topological space and as such play the algebraic role in the study of topological (Hausdorff) spaces.

By a  ring of \textit{definable} continuous functions we mean the following. Let $\cK$ be an expansion  of an ordered field $(K,\leq)$ (e.g. the real field) or a valued field $(K,v)$ (e.g. the $p$\--adics). In both cases $K$ carries a topology that turns $K$ into a topological field.
Let $X\subseteq K^n$ be definable (with parameters) in $\cK$ and let $C(X)$ be the set of all functions $X\lra K$ that are continuous and definable in $\cK$. If $\cK$ is the real or the $p$\--adic field, definable is the same as semi-algebraic.
Then $C(X)$ is a ring and similar to the classical case mentioned above,
$C(X)$ carries the algebraic information of the \textit{definable} homeomorphism type of $X$. This is amply illustrated in the case when $\cK$ is a real closed field:
Let $\mathtt{S}$ be the category of semi-algebraic subsets $X\subseteq \cK^n$, $n\in\NN$, with
continuous semi-algebraic maps as morphisms. Let $\mathtt{C}$ be the category of all the rings $C(X)$ of continuous $\cK$-definable functions and $\cK$-algebra homomorphisms as morphisms. Then the functor $C:\mathtt{S}\lra \mathtt{C}$ that sends $X$ to $C(X)$
and a morphism $f:X\lra Y$ to the $\cK$-algebra homomorphism $C(Y)\lra C(X);\ g\mapsto g\circ f$, is an anti-equivalence
of categories. This can also be seen in analogy with algebraic geometry, where varieties defined over a field $K$ are anti-equivalent to affine $K$-algebras, leading to the modern language of schemes. For semi-algebraic sets the machinery is developed in this vain in terms of so called semi-algebraic spaces, see \cite{schw-1987}.
The same connection exists between definable sets $X$ in the $p$\--adic context and the rings $C(X)$ for that context.
In each case, one may thus think of rings of continuous definable functions as
``coordinate rings" when studying topological properties of definable sets.

Model theoretic studies of rings of continuous (definable) functions
may be found in \cite{pre-schw-2002}, \cite{tressl-2007} for the case of real closed fields and in
\cite{belair-1991},\cite{belair-1995}, \cite{guzy-tressl-2008} in the $p$\--adic case. Model theory of rings of continuous functions on topological spaces have been studied in \cite{cherlin-1980}, which serves as a main source for inspiration for us.

In this article we study model theoretic properties of rings of continuous definable functions under fairly mild assumption on the base structure.
The principal goal is to show that,
in most cases, these rings \textit{interpret} the ring of integers in a uniform way; in particular these rings are not decidable.
This is established in Theorem~\ref{th:def-chunk}.
Now undecidability  was already known in some cases. For example, one can interpret the lattice of closed subsets of $\RR^n$ in the ring $C(\RR^n)$ of continuous semi-algebraic functions $\RR^n\lra \RR$.
When $n\geq 2$, this lattice itself is undecidable by \cite{grzegorczyk-1951} and indeed interprets the ring of integers, see \cite{tressl-2017}.
However, in this lattice, one cannot interpret $C(\RR^n)$ in a uniform
way as we will see in Remark~\ref{re:CofXNotInLattice}.

In the $p$\--adic case, the first author has shown in \cite{darniere-2006}
that the lattice of closed subsets of $\QQ_p^n$ is even decidable and it was unknown whether $C(\QQ_p^n)$ has a decidable theory at all.

In section \ref{se:loc-dim} we show that
in many cases the rings $C(X)$ actually \textit{define} the subset of
constant functions with values in a natural isomorphic copy of $\ZZ$,
see Theorem~\ref{th:def-fns-Dk-Z} for the precise formulation.
When $\cK$ is an $o$\--minimal expansion of a field and $X$ is definably connected of local dimension $\geq 2$ everywhere, then indeed $C(X)$ defines the ring of constant functions with values in $\ZZ$.
As a consequence, when $\cK$ expands the real field
we obtain that the real field, seen as constant functions $\RR^n\lra \RR$
is definable in $C(X)$. This implies that the projective hierarchy is definable in
these rings.

\medskip\noindent
\textbf{Detailed description of the main results and set up.}

\medskip\noindent
We consider an expansion $\cK$ of a topological field $(K,\cO)$ where
$\cO$ is either the unit interval $[-1,1]$ of a total order $\leq$ on
$K$, or the ring of a non-trivial valuation $v$ on $K$. We endow $K^m$
with the corresponding topology, for every integer $m\geq0$. We will make
almost everywhere the following assumption on $\cK$.
\begin{description}
\hypertarget{BFin}{}
  \item[(BFin)]
    \quad Every definable subset of $K$ that is closed, bounded and discrete, is finite.
\end{description}
As is well known, every discrete definable subset of $K$ is finite if
$\cK$ is $o$\--minimal \cite{drie-1998}, $P$\--minimal
\cite{hask-macp-1997} or $C$\--minimal \cite{hask-macp-1994}, and more
generally if it is a dp-minimal ordered or valued field \cite{simo-2011},
\cite{jahn-simo-wals-2017}. The same holds true if $\cK$ is any
Henselian (non-trivially) valued field of characteristic $0$, or any
algebraically bounded expansion of such \cite{drie-1989}. But property
\hyperlink{BFin}{(BFin)} is also obviously satisfied if $\cO$ is compact, hence if
$(K,\cO)$ is {\em any} expansion of $\RR$, the field of real numbers, or
any valued local field (that is a finite extension of the field $\QQ_p$
of $p$\--adic numbers, or the field of Laurent series $F((t))$ over
a finite field $F$).

Given any two definable sets $X\subseteq K^m$ and $Y\subseteq K^n$ we let $\CF(X,Y)$
denote the set of continuous functions  $X\to Y$ that are definable with parameters
in $\cK$. If $Y$ is a subring of $K$, e.g. when $Y=\ZZ$,
then  $\CF(X,Y)$ is considered as a ring, where addition and multiplication is given point-wise.
When $Y=K$, we just write $\CF(X)$.
Let $\tau^\ZZ=\big\{\tau^k\tq k\in\ZZ\big\}$ for
some non-zero $\tau\in\cO$ with $1/\tau\notin\cO$.
We also furnish $\tau^\ZZ$ with a ring structure so that the bijection
$\ZZ\lra \tau^\ZZ,\ k\mapsto \tau ^k$ is an isomorphism.
Again, pointwise addition and multiplication turns $\CF(X,\tau^\ZZ)$ into a ring.

Our main result is
that, under very general conditions on $\cK$ and $X$, the ring
$\CF(X,\ZZ)$ (resp. $\CF(X,\tau^\ZZ)$) is definable (resp. interpretable) in
the ring structure of $\CF(X)$ expanded by the set
\begin{displaymath}
  \cB =\{s\in \CF(X)\st \forall x\in X: s(x)\in\cO\}.
\end{displaymath}
\noindent
In many cases, in particular when $\cK$ is $o$\--minimal case or an expansion of a $p$\--adically closed field, we will see that $\cB$ is already definable in $\CF(X)$, see Remark~\ref{re:def-O3}.
On the other hand $\cO$ will {\em not} be definable in the ring structure when $K$ is algebraically closed.

\smallskip
A crucial input and starting point of the paper is the definability of the Nullstellensatz in a weak,
but surprisingly general form in Theorem~\ref{th:PosetZeroIsJac}. This says that
for almost \textit{any} ring $A$ of functions from some given set $S$ to a given field $K$,
the $n+1$-ary relation
$\{f_1=0\}\cap \ldots\cap \{f_n=0\}\subseteq \{g=0\}$ of $A$
is the Jacobson radical relation of $A$. For example it suffices to ask that
$K$ is not algebraically closed,
$A$ contains the constant functions with value in $K$ and that all functions in $A$ without zero
are invertible. This is explained in section \ref{se:SectionDefZeroPoset}.
In the case of $o$\--minimal or $P$\--minimal structures it implies that the lattice of closed definable subsets
of $X$ is interpretable in $\CF(X)$.

\medskip
The technical heart of this paper is Section~\ref{se:lim-chunk} where
we prove our first main result (Theorem~\ref{th:def-chunk} and
Corollary~\ref{co:def-Z-p0}).

\begin{thm-Alph}
  Assume that $\cK$ satisfies \hyperlink{BFin}{(BFin)}. Let $\tau$ be a non-zero and
  non-invertible element of $\cO$. Let $X\subseteq K^m$ be a definable set
  which has arbitrarily many germs\footnote{Roughly speaking, $X$ has
    arbitrarily many germs at $p_0$ if there exists arbitrarily many
    disjoint definable subsets of $S$ of $X\setminus\{p_0\}$ such that
    $p_0\in\overline{S}$. See Section~\ref{se:notation} for a precise
  definition.}
  at some point $p_0$.
  \begin{itemize}[itemsep=0pt]
    \item
      If $\cO=[-1,1]$ then the ring of functions $f\in\CF(X)$ such that
      $f(p_0)\in\ZZ$ is definable in $(\CF(X),\cB)$.
    \item
      In any case the set of functions $f\in\CF(X)$ such that
      $f(p_0)\in\tau^\ZZ$ is definable in $(\CF(X),\cB)$, and its natural
      ring structure is interpretable in $(\CF(X),\cB)$.
  \end{itemize}
  As a consequence the ring of integers $\ZZ$ is interpretable in
  $(\CF(X),\cB)$.
\end{thm-Alph}

If $\cK'=(K',\dots)$ is an elementary extension of $\cK$, and $X'$ is the
subset of $K'^m$ defined by the same formula as $X$, there is a
natural embedding of $(\CF(X),\cB)$ into $(\CF(X'),\cB')$. It follows
from the above result that, surprisingly enough, this is not an
elementary embedding in general (Corollary~\ref{co:plong-non-elem}).

Note that the above theorem is fairly general: it only assumes that $\cK$
satisfies \hyperlink{BFin}{(BFin)} and $X$ has arbitrarily many germs at $p_0$.
In Section~\ref{se:loc-dim} we improve it by assuming, in addition to \hyperlink{BFin}{(BFin)}, that $\cK$
has a good dimension theory for definable sets (see the axioms list
\hyperlink{Dim}{(Dim)} in Section~\ref{se:notation}). This holds true if $\cK$ is
$o$\--minimal, $P$\--minimal or $C$\--minimal, and more generally if
it is dp-minimal (see Remark~\ref{re:dim-dp}). We can then prove our
second main result.

\begin{thm-Alph}
  Assume that $\cK$ satisfies \hyperlink{Dim}{(Dim)} and \hyperlink{BFin}{(BFin)}. Let $\tau$ be a non-zero
  and non-invertible element of $\cO$. Let $X\subseteq K^m$ be a definable
  set of pure dimension $d\geq2$.
  \begin{enumerate}
    \item
      If $\cO=[-1,1]$ then $\CF(X,\ZZ)$ is definable in
      $(\CF(X),\cB)$.
    \item
      In any case $\CF(X,\tau^\ZZ)$ is definable in $(\CF(X),\cB)$.
  \end{enumerate}
\end{thm-Alph}

Theorem~\ref{th:def-fns-Dk-Z} actually gives a more precise and more
general statement. Let us also mention that in the $P$\--minimal case
(among others, see condition \hyperlink{ConditionZ}{(Z)} in Section~\ref{se:loc-dim}) the
condition on the pure dimension of $X$ can be relaxed: the result
holds true whenever $X$ has no isolated point.

\section{Model theoretic and topological set up}
\label{se:notation}

Let $K$ be a field and let $\cO$ be either the unit interval
$[-1,1]$ of a total order $\leq$ of $K$, or the ring of a non-trivial
valuation $v$ of $K$.

In both cases we let $\cO^\times$ denote the set of non-zero $a\in \cO$ with $a^{-1}\in\cO$.
This is a multiplicative subgroup of $K^\times=K\setminus\{0\}$. We
let $|\ |:K^\times\to K^\times/\cO^\times$ be the residue map and extend it
by $|0|=0$. In the ordered case this is just the usual
absolute value, in the valued case $|x|$ is a multiplicative notation
for the valuation. The set $|K|=\{|x|\tq x\in K\}$ is totally ordered by
the relation $|y|\leq |x|$ if and only if $y\in x\mal \cO$. Note that in the
valued case, $|y|\leq|x|$ if and only if $v(y)\geq v(x)$. The
multiplication defined on $|K|$ by $|x|\cdot|y|=|xy|$ extends the
multiplication of $|K|\setminus\{0\}=K^\times/\cO^\times$. The latter is a totally
ordered abelian (multiplicative) group. We denote it by $|K^\times|$, or
$v(K^\times)$ when additive notation is more appropriate.

For every $x\in K^m$ we set $\|x\|=\max(|x_1|,\dots,|x_m|)$, and
for all $X\subseteq K^m$ we write $\|X\|=\{\|x\|\tq x\in X\}$; if $m=1$ we simply
write $|X|$ (or $v(X)$ in additive notation). Open and closed {\df
balls} in $K^m$ with center $c\in K^m$ and radius $r\in K^\times$ are defined
as usually; both are clopen in the valued case. We endow $K^m$
with the topology defined by the open balls, and $|K|$ with the image
of this topology (which induces the discrete topology on $|K^\times|$ in
the valued case). For any set $S$, we write $\overline{S}$ for the
{\df topological closure} of $S$ and $\partial S=\overline{S}\setminus S$ for the {\df frontier} of $S$.

In this paper, {\df definable} means ``first-order definable
with parameters''. Let $\Loag=\{e,*,\leq\}$ be the language of (additive or
multiplicative) ordered groups, let $\Lring=\{0,1,+,-,\times\}$ be the language
of rings and let $\cL$ be an extension of $\Lring$ containing a unary
predicate symbol $\cO$. We will be working with expansions $\cK$ of an ordered or valued field $K$
to $\cL$, where the symbol $\cO$ is interpreted as explained above.
Definability refers to $\cL$ for subsets of $K^m$ and to $\Loag$ for
subsets of $|K^\times|$.

Now let $\CF(X)$ be the ring of continuous definable functions as explained in the introduction.
For all $f,g\in\CF(X)$
the sets $\{f=g\}$, $\{f\neq g\}$, $\{|f|\leq|g|\}$ and so on, are defined as the
subsets of $X$ on which the corresponding relation holds true. For
example $\{f=0\}=\{x\in X\tq f(x)=0\}$ is the zero-set of $f$.
On $\CF(X)$ we work with the relation
\[
  g\sqsubseteq f \iff \forall x\in X\ \big(g(x)=0\Rightarrow f(x)=0\big).
\]
We prove in Theorem~\ref{th:PosetZeroIsJac} that this relation is
definable in the ring $\CF(X)$.

\begin{definition}\label{de:vanish-on-sep-germs}
Let $X\subseteq K^m$ be a definable set, $p_0\in X$ and let $U$ be a definable
neighborhood of $p_0$ in $X$. We say that a function $s\in\CF(U)$
{\df vanishes on a germ} at $p_0$ if $s$ has a non-isolated zero at $p_0$.
We say that $s_1,\dots,s_k\in\CF(U)$  {\df vanish on separated germs}
at $p_0$ if there are
$\delta_1,\dots,\delta_k\in\CF(U\setminus\{p_0\})$ with the following properties.
\begin{description}
  \item[(S1)]
    The sets $S_i=\{s_i=0\}\setminus\{p_0\}$ are pairwise disjoint.
  \item[(S2)]
    Each $s_i$  vanishes on a germ at $p_0$.
  \item[(S3)]
    For $i\neq j$, the function $\delta_i$ is constantly $1$ on $S_i$ and constantly $0$ on $S_j$.
  \item[(S4)]
    Each $\delta_i$ is bounded on $U\setminus\{p_0\}$.
\end{description}
We call functions $\delta_1,\dots,\delta_k$ with these properties {\df separating functions}
for $s_1,\dots,s_k$. Finally we say that $X$ {\df has arbitrarily many
germs} at $p_0$ if for every positive integer $k$ there is a definable
neighborhood $U$ of $p_0$ in $X$ and $k$ functions in $\CF(U)$
that vanish on separated germs at $p_0$.
\end{definition}

\begin{examples}
  Intuitively this means that $p_0$ can be approached in $X$ through
  arbitrarily many disjoint ways.
  \begin{enumerate}
    \item
      For every $r\geq2$, $X=K^r$ has arbitrarily many germs at the
      origin (see Proposition~\ref{pr:mdir-open}).
    \item
      If $\cK$ is $o$\--minimal then $X=K$ does not have arbitrarily many
      germs at $0$, because any definable set $S\subseteq K\setminus\{0\}$ whose closure
      contains $0$ will necessarily meet one of the two intervals
      $(-\infty,0)$ or $(0,+\infty)$.
    \item
      If $\cK$ is any expansion of the real field $\RR$ including the $\sin$
      function then $X=K$ has arbitrarily many germs at $0$: take
      $s_i(x)=x\sin((1-ix)/(kx))$ for $1\leq i\leq k$.
    \item
      If $\cK$ is any expansion of a valued field whose value group is
      a $\ZZ$\--group then $K$ has arbitrarily many germs at $0$: take
      $s_i(x)=x\chi_i(x)$ where $\chi_i$ is the characteristic function of the
      (clopen) set of elements of $\cO\setminus\{0\}$ whose valuation is
      congruent to $i$ modulo $k$.
  \end{enumerate}
\end{examples}

\noindent
A {\df coordinate projection} $\pi:K^m\lra K^r$ is a
map of the form $\pi(x)=(x_i)_{i\in I}$ for some
$I\subseteq\{1,\dots,m\}$ of size $r\geq 0$; we write $\pi_I$ when necessary.
The {\df dimension} of a non-empty set $X$ is the maximal $r\leq m$
such that $\pi(X)$ has non-empty interior for some
coordinate projection $\pi:K^m\to K^r$.
This is extended to $\dim(\emptyset)=-\infty$.
The {\df local dimension} of $X$
at a point $x\in K^m$ is
\begin{displaymath}
  \dim(X,x)=\min\big\{\dim B\cap X\tq B\mbox{ is an open ball centered at
  }x\big\}.
\end{displaymath}
Note that $\dim(X,x)=-\infty$ if and only if $x\notin\overline{X}$, and
that $\dim(X,x)=0$ if and only if $x$ is an isolated point of $X$.

\begin{definition}\label{de:defnDim}
For every integer $d\geq 0$ we write
\begin{displaymath}
  \Delta_d(X)=\big\{x\in X\tq \dim(X,x)=d\big\}.
\end{displaymath}
Further we write $W_d(X)$ for the set of all $x\in X$ for which
there is an open ball $B$ centered at $x$ and a coordinate
projection $\pi:K^m\to K^d$ that induces by restriction a homeomorphism
between $B\cap X$ and an open subset of $K^d$.

Since the open balls are uniformly definable in $\Lring\cup\{\cO\}$ the sets
$\Delta_d(X)$ and $W_d(X)$ are definable in $\cL$.

We say that \textbf{$\cK$ satisfies \hypertarget{Dim}{(Dim)}} if for every
definable set $X\subseteq
K^m$ and every definable map $f:X\to K^r$ the following properties hold
true.

\begin{listD}
  \item\label{it:dim-image}
    $\dim (f(X))\leq\dim (X)$.
  \item\label{it:dim-frontier}
    $\dim (X) =\dim(\overline{X})$ and if $X\neq \0$, then $\dim (\partial X)<\dim (X)$.
  \item\label{it:dim-Sk}
    If $\dim (X)=d\geq 0$ then $\dim (X\setminus W_d(X))<d$.\footnote{
In the classical cases of $o$\--minimal, $C$\--minimal and $P$\--minimal structures, the set $\Delta_d(X)$
is usually considered instead of
    $W_d(X)$ in (Dim\ref{it:dim-Sk}). However it is this slightly
    stronger statement with $W_d(X)$ which we need in
    Section~\ref{se:loc-dim}. It appears in Proposition~4.6 of
    \cite{simo-wals-2018}.}
\end{listD}
\end{definition}

\begin{remark}\label{re:dim-dp}
  These properties hold true in every dp-minimal expansion of a field which is not strongly
minimal (\cite{simo-wals-2018}). This implies and generalises
  known results  on $o$\--minimal, $C$\--minimal and $P$\--minimal fields (see
  \cite{drie-1998}, \cite{hask-macp-1994}, \cite{hask-macp-1997} and
  \cite{darn-cubi-leen-2017}).
\end{remark}

\noindent
Note that the sets $\Delta_d(X)$ are pairwise disjoint and that $\bigcup_{l\geq
d}\Delta_l(X)$ is closed in $X$ for each $d$, while $W_d(X)$ is open in
$X$.

\begin{property}\label{py:Sk-Dk}
  Property (Dim\ref{it:dim-image}) implies that the dimension is preserved by definable
  bijections and that
  $W_d(X)\subseteq\Delta_d(X)$ for every $d\geq0$. In particular the sets $W_d(X)$ are
  pairwise disjoint.
\end{property}

\begin{proof}
  The first assertion is obvious, we prove the second one. If $x\in
  W_d(X)$, then there is a definable neighborhood $U$ of $x$ in $X$, a
  coordinate projection $\pi:K^m\to K^d$ and an open subset $V$ of $K^d$
  such that $\pi|_U$ is a homeomorphism onto
  $V$. In particular $\dim U=d$ by the first assertion, hence $\dim
  W_d(X)\geq d$. For every sufficiently small open ball $B$ centered at $x$ we have
  $B\cap X\subseteq U$, hence $\pi|_{B\cap X}$ is a homeomorphism
  onto a non-empty open subset of $K^d$ and so $\dim (B\cap X)=d$. This proves
 $\dim(X,x)=d$ hence $W_d(X)\subseteq\Delta_d(X)$. Since the sets $\Delta_d(X)$
  are pairwise disjoint, so are the sets $W_d(X)$.
\end{proof}

\section{Definability of the poset of zero sets}
\label{se:SectionDefZeroPoset}

\noindent
Let $X$ be a set and let $A$ be a ring of functions $X\lra K$ for some field $K$.
We show in Theorem~\ref{th:PosetZeroIsJac} that for a huge class of examples, the $(n+1)$-ary relation $\{f_1=0\}\cap \ldots\cap \{f_n=0\}\subseteq \{g=0\}$ of $A$
is equivalent to $g$ being in the Jacobson radical of the ideal $(f_1,\ldots,f_n)$.
In particular, theses relations are 0-definable in the ring $A$.
The crucial ingredients are contained in Proposition~\ref{pr:Kuv} and Proposition~\ref{pr:Auv}.
\smallskip

\noindent
Let $I$ be an ideal of a ring $A$.
The \textbf{Jacobson radical} $\Jac(I)$ of $I$ is defined as the intersection of the maximal ideals of $A$
containing $I$ ({\it cf.} \cite[p. 3]{Matsum1989}).  The Jacobson radical of \textit{the ring} $A$ is defined as $\Jac(0)$.

\begin{remark}\label{re:RemindJac}
  One checks easily that
$\Jac(0)=\{a\in A\st\forall x\in A: 1+ax\in A^\times\}$, where $A^\times$ denotes the units of $A$.
Translating this description for $A/I$ back to $A$ shows that
\(
\Jac(I)=\{a\in A\st\forall x\in A\ \exists y\in A,z\in I: (1+ax)y=1+z\}.
\)
\end{remark}

\begin{proposition}\label{pr:Kuv}
Let $K$ be a field.
\begin{enumerate}

\item\label{it:KuvNacl}
If $K$ is not algebraically closed, then there are polynomials $u(x,y),v(x,y)$ in two variables over
$K$ such that the unique zero of $xu(x,y)+yv(x,y)$ in $K^2$ is $(0,0)$.

\item\label{it:KuvInvolution}
Assume that $R$ is a real closed field and $K=R[i]$ is its algebraic closure.
We write $a^*$ for the complex conjugation of $a\in K$ with respect to $R$.
Then for the functions $u(x,y)=x^*$ and $v(x,y)=y^*$ defined on $K^2$,
the unique zero of $xu(x,y)+yv(x,y)$ in $K^2$ is $(0,0)$.

\item\label{it:KuvACVF}
Assume that $K$ is a topological
    field (see \cite{warn-1993}) of characteristic $\neq 2$, where a basis of neighborhoods of $0\in K$
    is given by the non-zero ideals of a ring $\cO$ with fraction
    field $K$ and non-zero Jacobson radical\footnote{Our principal example here is a proper valuation ring $\cO$ of $K$.}.

Then there are $\tau$-continuous functions $u,v:K^2\lra K$ that are definable in the expansion  $(K,\cO)$
of $K$ by the set $\cO$,
such that the unique zero of $xu(x,y)+yv(x,y)$ in $K^2$ is $(0,0)$.
\end{enumerate}
\end{proposition}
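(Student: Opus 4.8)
The plan is to treat the three statements separately; (1) and (2) are short and (3) is the heart. For (1): since $K$ is not algebraically closed it carries a monic irreducible polynomial $f(t)=t^{n}+a_{n-1}t^{n-1}+\dots+a_{0}$ with $n\geq 2$. Homogenising, the binary form $F(x,y)=y^{n}f(x/y)=x^{n}+a_{n-1}x^{n-1}y+\dots+a_{0}y^{n}$ has $(0,0)$ as its only zero in $K^{2}$: if $F(x,y)=0$ then either $y=0$ (forcing $x^{n}=0$) or $f(x/y)=0$ (impossible). Since $F(x,y)=x\cdot x^{n-1}+y\cdot(a_{n-1}x^{n-1}+a_{n-2}x^{n-2}y+\dots+a_{0}y^{n-1})$, the functions $u=x^{n-1}$ and $v=a_{n-1}x^{n-1}+\dots+a_{0}y^{n-1}$ work. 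For (2): writing $x=a+bi$, $y=c+di$ with $a,b,c,d\in R$, one has $xu+yv=xx^{*}+yy^{*}=a^{2}+b^{2}+c^{2}+d^{2}$, and in the real closed field $R$ a sum of squares is zero only if every summand is; so the only zero in $K^{2}$ is $(0,0)$.

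For (3) a polynomial cannot serve, since over an algebraically closed $K$ (the case of a proper valuation ring) every binary form has a nontrivial zero; the topology must be used. Put $J=\Jac(\cO)$, nonzero by hypothesis, and recall $1+J\subseteq\cO^{\times}$. I would first record three facts: $J$ is clopen in $K$ (a nonzero ideal is a neighbourhood of $0$, hence an open additive subgroup, hence also closed); $K$ is Hausdorff (an element $c\neq 0$ lying in every nonzero ideal of $\cO$ lies in $c^{2}\cO$, so is a unit of the domain $\cO$, contradicting its membership in $\Jac(\cO)\neq 0$; so the intersection of all nonzero ideals is $\{0\}$); and $z_{\alpha}y_{\alpha}\to 0$ whenever $z_{\alpha}\in\cO$ and $y_{\alpha}\to 0$ (eventually $y_{\alpha}$ lies in any prescribed nonzero ideal $I$, and then so does $z_{\alpha}y_{\alpha}$). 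Now set $D=\{(x,y)\in K^{2}:y\neq 0 \text{ and } x/y\in J\}$, which is definable in $(K,\cO)$, and let $u(x,y)=x$, $v(x,y)=y$ for $(x,y)\in D$ and $v(x,y)=0$ elsewhere.

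The verification then runs as follows. $D$ is open in $K^{2}$, being the preimage of the open set $J$ under the map $(x,y)\mapsto x/y$, continuous on the open set $\{y\neq 0\}$; and $D$ is closed in $K^{2}\setminus\{0\}$: if a net in $D$ converges to $(x_{0},y_{0})\neq(0,0)$ then either $y_{0}\neq 0$, so $x_{0}/y_{0}=\lim x_{\alpha}/y_{\alpha}\in\overline{J}=J$ and $(x_{0},y_{0})\in D$, or $y_{0}=0$, so $x_{\alpha}=(x_{\alpha}/y_{\alpha})y_{\alpha}\to 0$ by the third fact, forcing $x_{0}=0$ by Hausdorffness, a contradiction. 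Hence $D$ is clopen in $K^{2}\setminus\{0\}$, so $v$ is continuous there (it equals $y$ on $D$ and $0$ on its complement), and $v$ is continuous at the origin as well since $v(0,0)=0$ and $v(x,y)\in\{0,y\}$, whence $v$ maps $K\times W$ into $W$ for every neighbourhood $W$ of $0$; and $u=x$ is obviously continuous. Finally $xu+yv$ equals $x^{2}$ off $D$ and $x^{2}+y^{2}=y^{2}(1+(x/y)^{2})$ on $D$; on $D$ we have $(x/y)^{2}\in J$, so $1+(x/y)^{2}\in 1+J\subseteq\cO^{\times}$ while $y\neq 0$, hence $xu+yv\neq 0$; off $D$ and off the origin $x\neq 0$ (if $x=0$ and $y\neq 0$ then $x/y=0\in J$, so $(x,y)\in D$), hence $xu+yv=x^{2}\neq 0$. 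Thus the unique zero of $xu+yv$ in $K^{2}$ is $(0,0)$.

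The main obstacle is (3), and inside it the only non-formal point is that $D$ is closed in $K^{2}\setminus\{0\}$, i.e. that $D$ does not accumulate on the punctured $x$-axis: this is precisely where the shape of the topology is used (a bounded factor times a null net is null), together with Hausdorffness, which the hypotheses provide. Once that is settled, continuity of $v$ at the origin is free — the factor $y$ absorbs the otherwise discontinuous indicator of $D$ — and the non-vanishing of $xu+yv$ reduces to the elementary fact that $1+J\subseteq\cO^{\times}$.
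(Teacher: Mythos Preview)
Your arguments for (1) and (2) are essentially the paper's. For (3) your construction is correct but genuinely different from the paper's. The paper partitions $K^2$ according to whether $|x-y|<|x+y|$ or not, sets $u,v$ equal to $x\pm y$ on the pieces, and obtains $xu+yv=(x\pm y)^2$; the key technical point there is a claim that for any continuous $f$ the map $|f|$ is locally constant off its zero set, which is used to show the pieces are open, and the assumption $\mathrm{char}\,K\neq 2$ is needed so that the degenerate piece $\{x-y=x+y=0\}$ collapses to the origin. You instead take $D=\{y\neq 0,\ x/y\in J\}$, put $u=x$ and $v=y\cdot 1_D$, and get $xu+yv\in\{x^2,\ y^2(1+(x/y)^2)\}$; your key technical point is that $D$ is clopen in $K^2\setminus\{0\}$, which you extract from Hausdorffness together with the ``bounded times null is null'' observation. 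Your route is arguably a little leaner (only one piecewise function, and its discontinuity is absorbed by the factor $y$ at the origin), and as a bonus it never invokes $\mathrm{char}\,K\neq 2$: on $D$ one has $1+(x/y)^2\in 1+J\subseteq\cO^\times$ in any characteristic, and off $D$ one has $x\neq 0$ regardless. The paper's route, on the other hand, keeps $u$ and $v$ on equal footing and yields values of $u,v$ that are $K$-linear in $(x,y)$ on each piece, which is aesthetically closer to the polynomial case (1).
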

\begin{proof}
\ref{it:KuvNacl} Since $K$ is not algebraically closed there is a polynomial
$p(x)=x^d+a_{d-1}x^{d-1}+\ldots+a_0\in K[t]$ without zeroes in $K$ such that $d\geq 1$.
Then its homogenization
\[
q(x,y)=x^d+a_{d-1}x^{d-1}y+\ldots+a_0y^{d}
\]
has a unique zero in $K^2$, namely $(0,0)$: If $b=0$, then $q(a,b)=0$ just if $a=0$.
If $b\neq 0$, then $q(a,b)=b^d\cdot p(\frac{a}{b})\neq 0$.

We choose $u(x,y)=x^{d-1}$ and $v(x,y)=a_{d-1}x^{d-1}+\ldots+a_0y^{d-1}$ and see that
$xu(x,y)+yv(x,y)=q(x,y)$ has the required properties.

\smallskip\noindent
\ref{it:KuvInvolution} is clear.

\smallskip\noindent
\ref{it:KuvACVF} We write $|K|=\{a\cO\st a\in K\}$ for the set of principal fractional ideals of $\cO$
and $|a|=a\cO$ for $a\in K$. Further we write $|a|\leq |b|$ instead of $|a|\subseteq |b|$.

\smallskip\noindent
\textit{Claim.}
For every topological space $X$, each continuous function
$f:X\lra K$ and all $x_0\in X$ with $f(x_0)\neq 0$ there is an open neighborhood
$U$ of $x_0$ in $X$ such that the restriction of $|f|:X\lra |K|$ to $U$ is constant.

\noindent
\textit{Proof.}
Replacing $f$ by $\frac{1}{f(x_0)}\mal f$ if necessary,  we may assume that $f(x_0)=1$.
By the assumption in (iii) there is a non-zero
element $\eps$ in the Jacobson radical of $\cO$
such that $1+\eps\cO$ is an open neighborhood of $1$ in $K$.
By continuity of $f$ at $x_0$ there is a neighborhood $U$
of $x_0$ in $X$ with $f(U)\subseteq 1+\eps\cO$.
Since $\eps\cO\subseteq \Jac(0)$, no element in $1+\eps\cO$ can be in any maximal ideal of $\cO$.
Hence $f(U)\subseteq 1+\eps\cO\subseteq A^\times$, which shows that
$|f(x)|=|1|$ for all $x\in U$.
\hfill$\diamond$

\smallskip\noindent
Returning to the proof of \ref{it:KuvACVF}, we define
\begin{displaymath}
    U=\big\{|x-y|<|x+y|\big\},\quad
    V=\big\{0\neq |x-y|\nless|x+y|\big\},
\end{displaymath}
\begin{displaymath}
    Z=\big\{x-y=x+y=0\big\}.
\end{displaymath}
This is a partition of $K^2$ in $(K,\cO)$-definable sets.
Note
that $x+y\neq 0$ in $U$ and  $x-y\neq 0$ in $V$.
Using the claim with $X=K^2$ it is easy to see that $U$ and $V$ are open in $K^2$.
\iflongversion\LongStart
This is clear for $V$ and also for $U$ when the base point is not on the diagonal.
Proof for $U$, the function $f(x,y)=x+y$
and a point $(x_0,x_0)\in U$. Then $2x_0\neq 0$ and after division by $2x_0$ we may assume that
$x_0=\frac{1}{2}$. Choose $\eps\in \Jac(0)$. Then $(\frac{1}{2}+\eps\cO)\times (\frac{1}{2}+\eps\cO)\subseteq U$.
\LongEnd\else\fi
Further, the
functions $u,v:K^2\lra K$ defined by
\[
u(x,y)=
  \begin{cases}
    x+y & \text{if }(x,y)\in U, \cr
    x-y & \text{if }(x,y)\in K^2\setminus U
  \end{cases}
\quad
\text{and}
\quad
v(x,y)=
  \begin{cases}
    x+y & \text{if }(x,y)\in U, \cr
    y-x & \text{if }(x,y)\in K^2\setminus U,
  \end{cases}
\]
are continuous on $U$ and on $V$.
Moreover, since $x-y=x+y=0$ on $Z$,
both $u$ and $v$ tend to $0$ at every point of $Z$.
Thus $u$ and $v$ are continuous on $X$. By
construction they are also definable in $(K,\cO)$. On $U$,
$xu(x,y)+yv(x,y)=(x+y)^2$ has no zero. On $V$,
$xu(x,y)+yv(x,y)=(x-y)^2$ has no zeroes.

Since $K$ has characteristic $\neq 2$, the set $Z$ is $\{(0,0)\}$, which  establishes the assertion.
\iflongversion\LongStart
In characteristic 2, the function vanishes on the diagonal $\subseteq K^2$.
\LongEnd\else\fi
\end{proof}

\begin{proposition}\label{pr:Auv}
Let $K$ be a field and let
$u,v:K^2\lra K$ be functions such that the unique zero of $xu(x,y)+yv(x,y)$ in $K^2$ is $(0,0)$.

Let $X$ be a set and let $A$ be a ring of functions $X\lra K$
such that $A$ is closed under composition
with $u$ and $v$, i.e., $u\circ(f,g),v\circ (f,g)\in A$ for all $f,g\in A$.

\begin{enumerate}
\item\label{it:AuvPrincipal}
For all $f_1,\ldots,f_n\in A$ there are $g_1,\ldots,g_n\in A$ with
\[
\{f_1=0\}\cap \ldots\cap\{f_n=0\}=\{g_1f_1+\ldots+g_nf_n=0\}.
\]

\item\label{it:AuvMain}
If
every $f\in A$ without zeroes in $X$ is a unit in $A$ then
for all $f_1,\dots,f_n,g\in A$ we have
\[
\{f_1=0\}\cap \ldots\cap \{f_n=0\}\subseteq \{g=0\}\ \Longrightarrow\ g\in\Jac(f_1,\ldots,f_n).
\]
\iflongversion\LongStart
Notice that (the contrapositive of) this implication is equivalent to saying that
$\Spec (A)^{\max}$ is included in the constructible closure of
the image $\hat X$ of the map $X\lra \Spec(A),\ x\mapsto \{f\in A\st f(x)=0\}$.
\LongEnd\else\fi

\item\label{it:AuvMinor}
If
$A$ contains all constant functions $X\lra K$, then
for all $f_1,\dots,f_n,g\in A$ we have
\[
g\in\Jac(f_1,\ldots,f_n)\ \Longrightarrow\ \{f_1=0\}\cap \ldots\cap \{f_n=0\}\subseteq \{g=0\}.
\]
\iflongversion\LongStart
Notice that (the contrapositive of) this implication is equivalent to saying that
$\hat X$ is included in the constructible closure of $\Spec (A)^{\max}$.
\LongEnd\else\fi

\end{enumerate}
\end{proposition}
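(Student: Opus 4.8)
The three parts build on each other, so I would prove them in order. For part~\ref{it:AuvPrincipal}, the plan is to use the hypothesis on $u,v$ to collapse a pair of zero sets into a single one, then iterate. Given $f,g\in A$, set $f'=u\circ(f,g)$ and $g'=v\circ(f,g)$; these lie in $A$ by the closure hypothesis, and for every $x\in X$ we have $f(x)f'(x)+g(x)g'(x)=0$ if and only if $(f(x),g(x))=(0,0)$, i.e. $\{ff'+gg'=0\}=\{f=0\}\cap\{g=0\}$. Thus the intersection of two principal zero sets is again a principal zero set coming from an element of $A$. A straightforward induction on $n$ (pairing $f_1f_1'+\dots$ with $f_{n+1}$ at each step, noting that the combining coefficients can be absorbed into new $g_i$'s) gives $g_1,\dots,g_n\in A$ with $\{f_1=0\}\cap\dots\cap\{f_n=0\}=\{g_1f_1+\dots+g_nf_n=0\}$.

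For part~\ref{it:AuvMain}, by part~\ref{it:AuvPrincipal} write $h=g_1f_1+\dots+g_nf_n$, so $\{h=0\}=\{f_1=0\}\cap\dots\cap\{f_n=0\}\subseteq\{g=0\}$ and $h\in(f_1,\dots,f_n)$. It suffices to show $g\in\Jac(h)=\Jac\big((f_1,\dots,f_n)\big)$ (the inclusion $(h)\subseteq(f_1,\dots,f_n)$ makes $\Jac(h)\subseteq\Jac(f_1,\dots,f_n)$). Using the description of the Jacobson radical of an ideal recalled in Remark~\ref{re:RemindJac}, I must show: for every $x\in A$ there are $y\in A$ and $z\in(h)$ with $(1+gx)y=1+z$. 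The natural candidate is to take $z=-hw$ for a suitable $w$ and $y=(1+gx)^{-1}(1-hw)$, which forces the problem down to showing that $1+gx-h\cdot(\text{something})$ is invertible in $A$ — and by the hypothesis ``every zero-free element of $A$ is a unit'', invertibility reduces to showing this function has no zero on $X$. Concretely: where $h(x_0)\neq 0$ there is nothing to arrange since we only need the relation modulo $(h)$; where $h(x_0)=0$ we have $x_0\in\{f_1=0\}\cap\dots\cap\{f_n=0\}\subseteq\{g=0\}$, hence $g(x_0)=0$ and $(1+gx)(x_0)=1\neq 0$. So $1+gx$ is nonzero at every zero of $h$; a small manipulation (multiplying by an element congruent to $1$ modulo $(h)$, or passing to $A/(h)$ directly) upgrades this to the required identity, and the zero-free$\Rightarrow$unit hypothesis closes it. The cleanest route is in fact to argue in $A/(h)$: there $g$ vanishes on the image of $X$, every element not vanishing on that image is a unit (lift and use the hypothesis together with $\{h=0\}\subseteq$ its zero set), so $g$ lies in every maximal ideal of $A/(h)$, i.e. $g\in\Jac(0)$ of $A/(h)$, which is exactly $g\in\Jac(h)$ in $A$.

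For part~\ref{it:AuvMinor}, assume $g\in\Jac(f_1,\dots,f_n)$ and fix $x_0\in\{f_1=0\}\cap\dots\cap\{f_n=0\}$; I want $g(x_0)=0$. Evaluation at $x_0$ is a ring homomorphism $\mathrm{ev}_{x_0}\colon A\to K$ which is surjective because $A$ contains all constants, so its kernel $\Dm_{x_0}$ is a maximal ideal of $A$, and it contains $f_1,\dots,f_n$. Hence $\Dm_{x_0}\supseteq(f_1,\dots,f_n)$, so $g\in\Jac(f_1,\dots,f_n)\subseteq\Dm_{x_0}$, i.e. $g(x_0)=0$, as wanted.

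I expect the main obstacle to be part~\ref{it:AuvMain}: part~\ref{it:AuvPrincipal} is a formal manipulation and part~\ref{it:AuvMinor} is immediate from the evaluation homomorphism, but part~\ref{it:AuvMain} requires genuinely using the hypothesis that zero-free functions are units, and the care lies in correctly passing to the quotient $A/(h)$ (or equivalently, in producing the witnesses $y\in A$, $z\in(h)$ of Remark~\ref{re:RemindJac} by hand) so that the argument really shows membership in the Jacobson radical of the ideal and not merely in its ordinary radical.
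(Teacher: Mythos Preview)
Your arguments for parts~\ref{it:AuvPrincipal} and~\ref{it:AuvMinor} are correct and coincide with the paper's. The issue is part~\ref{it:AuvMain}.

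You correctly reduce to showing $g\in\Jac((h))$ and observe that for every $a\in A$ the function $1+ga$ has no zeros on $\{h=0\}$. But the passage from ``$1+ga$ and $h$ have no common zeros'' to ``$1+ga$ is a unit in $A/(h)$'' is not a ``small manipulation'': it is precisely the nontrivial step, and your sketch does not supply it. Passing to $A/(h)$ does not help directly, because $A/(h)$ is in general \emph{not} the ring of restrictions $A|_{\{h=0\}}$ (the kernel of restriction may strictly contain $(h)$), so knowing that $g$ vanishes pointwise on $\{h=0\}$ does not place $\bar g$ in every maximal ideal of $A/(h)$. Likewise, the lift of a ``zero-free on $\{h=0\}$'' element of $A/(h)$ may well have zeros in $X$, so the hypothesis on $A$ does not apply to it.

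What is missing is a \emph{second} use of part~\ref{it:AuvPrincipal}. The paper takes an arbitrary maximal ideal $\Dm\supseteq(f_1,\ldots,f_n)$, assumes $g\notin\Dm$, writes $ag+m=1$ with $m\in\Dm$, deduces $\{h=0\}\cap\{m=0\}=\0$, and then applies part~\ref{it:AuvPrincipal} to $h$ and $m$ to obtain $b_1,b_2\in A$ with $b_1h+b_2m$ zero-free, hence a unit, contradicting $h,m\in\Dm$. Your route can be repaired the same way: since $1+ga$ and $h$ have no common zeros, part~\ref{it:AuvPrincipal} yields $b_1,b_2\in A$ with $c:=b_1(1+ga)+b_2h$ zero-free, hence $c\in A^\times$; then $(1+ga)(b_1c^{-1})=1-(b_2c^{-1})h$ exhibits $1+ga$ as a unit modulo $(h)$. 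Either way, the key point you did not name is that part~\ref{it:AuvPrincipal} must be invoked twice.
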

\begin{proof}
\ref{it:AuvPrincipal}. By induction on $n$, where $n=1$ is trivial.
The induction step readily reduces to the claim in the case $n=2$.
By assumption on $A$ we know that $h:=f_1\cdot (u\circ(f_1,f_2))+f_2\cdot (v\circ (f_1,f_2))\in A$.
By assumption on $u,v$ we see that the zero set of $h$ in $X$
is $\{f_1=0\}\cap \{f_2=0\}$. Hence we may take $g_1=u\circ(f_1,f_2)$
and $g_2=v\circ(f_1,f_2)$.

\smallskip\noindent
\ref{it:AuvMain}.
Assume $\bigcap_{i=1}^n\{f_i=0\}\subseteq \{g=0\}$
and let $\Dm$ be a maximal ideal of $A$ containing $f_1,\ldots,f_n$.
We need to show that $g\in \Dm$. By \ref{it:AuvPrincipal}, there is some $h$ in the ideal
$(f_1,\ldots,f_n)$
generated by $f_1,\ldots,f_n$ in $A$
with zero set $\bigcap_{i=1}^n\{f_i=0\}$.
In particular $h\in\Dm$ and $\{h=0\}\subseteq \{g=0\}$.

Suppose $g\notin\Dm$. Then there is some $a\in A$ and some $m\in\Dm$ with $ag+m=1$,
in particular $\{g=0\}\cap \{m=0\}=\0$. Since $\{h=0\}\subseteq \{g=0\}$
we get $\{h=0\}\cap \{m=0\}=\0$.
By \ref{it:AuvPrincipal} again, there are $b_1,b_2\in A$ such that $b_1h+b_2m$ has no zeroes.
But then by assumption on $A$, $b_1h+b_2m$ is a unit of $A$, contradicting
$h,m\in\Dm$.

\smallskip\noindent
\ref{it:AuvMinor}.
Let $x\in \bigcap_{i=1}^n\{f_i=0\}$ and let $e:A\lra K$ be the evaluation map at $x$.
Since $A$ contains the constant functions we know that $e$ is surjective, hence $\Dm=\ker(e)$ is a maximal ideal
of $A$. Then $f_1,\ldots,f_n\in \Dm$ and so by assumption $g\in\Dm$, i.e. $g(x)=0$.
\end{proof}

\begin{theorem}\label{th:PosetZeroIsJac}
Let $X$ be a set and let $A$ be a ring of functions $X\lra K$
containing the constant functions
such that every $f\in A$ without zeroes in $X$ is a unit in $A$.
Suppose one of the following conditions hold:
\begin{enumerate}[(a)]
\item $K$ is not algebraically closed, or,
\item
$K$ is the algebraic closure of a real closed field $R$ and $A$ is closed under
conjugation of $K=R[i]$, or,
\item $K$ is a topological
    field  of characteristic $\neq 2$, where a basis of neighborhoods of $0\in K$
    is given by the non-zero ideals of a ring $\cO$ with fraction
    field $K$ and non-zero Jacobson radical.
    Further assume that $w\circ (f,g)\in A$ for every
    $(K,\cO)$-definable continuous function $w:K^2\lra K$.
\end{enumerate}
The following conditions are equivalent for all $f_1,\dots,f_n,g\in A$.
\begin{enumerate}
\item\label{it:PosetZeroIsJacZ}
$\{f_1=0\}\cap \ldots\cap \{f_n=0\}\subseteq \{g=0\}\iff g\in\Jac(f_1,\ldots,f_n).$
\item\label{it:PosetZeroIsJacJ}
$g\in\Jac(f_1,\ldots,f_n)$.
\item\label{it:PosetZeroIsJacAx}
For all $h\in A$, the element $1+hg$ is a unit modulo the ideal $(f_1,\ldots,f_n)$.
\end{enumerate}

\noindent
Consequently, the $(n+1)$-ary relation $\{f_1=0\}\cap \ldots\cap \{f_n=0\}\subseteq \{g=0\}$ of $A$
is definable in the ring $A$ by the $\Lring$-formula
\[
\forall x\, \exists y_1,\ldots,y_n,z\ (1+x\mal g)\mal z=1+y_1f_1+\ldots+y_nf_n.
\]
Of particular interest for us is the case $n=1$. Hence the binary relation
$f\sqsubseteq g$ defined as $\{f=0\}\subseteq \{g=0\}$, is 0-definable in $A$.
\end{theorem}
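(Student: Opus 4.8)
The plan is to deduce the theorem almost entirely from the two preceding propositions together with Remark~\ref{re:RemindJac}, since those already carry the mathematical content. First I would check that in each of the three cases (a), (b), (c) the hypotheses of Proposition~\ref{pr:Auv} are met, the delicate one being that $A$ is closed under composition with a pair of functions $u,v\colon K^2\to K$ for which $xu(x,y)+yv(x,y)$ has $(0,0)$ as its only zero in $K^2$. In case (a), Proposition~\ref{pr:Kuv}\ref{it:KuvNacl} provides such $u,v$ as polynomials over $K$, and any ring of functions $X\to K$ containing the constant functions is automatically closed under composition with polynomials over $K$, so no topology or continuity enters here. In case (b) one takes $u(x,y)=x^*$ and $v(x,y)=y^*$ from Proposition~\ref{pr:Kuv}\ref{it:KuvInvolution}, so that $u\circ(f,g)=f^*$ and $v\circ(f,g)=g^*$, and these lie in $A$ exactly because $A$ is assumed stable under conjugation of $K=R[i]$. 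In case (c), Proposition~\ref{pr:Kuv}\ref{it:KuvACVF} supplies $(K,\cO)$-definable continuous functions $u,v$, and the extra assumption of clause (c) — that $w\circ(f,g)\in A$ for every $(K,\cO)$-definable continuous $w\colon K^2\to K$ — is precisely what is needed; note that the $u,v$ produced there are genuinely piecewise-defined rather than polynomial, which is why this stronger closure hypothesis cannot be dropped in case (c).

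Once this is done, the remaining hypotheses of Proposition~\ref{pr:Auv} — that $A$ contains the constant functions and that every zero-free element of $A$ is a unit — are literally among the hypotheses of the theorem, so both parts \ref{it:AuvMain} and \ref{it:AuvMinor} of Proposition~\ref{pr:Auv} apply. Part \ref{it:AuvMain} yields the implication from the geometric inclusion $\{f_1=0\}\cap\cdots\cap\{f_n=0\}\subseteq\{g=0\}$ to $g\in\Jac(f_1,\dots,f_n)$, and part \ref{it:AuvMinor} gives the converse; together they establish the equivalence of conditions~\ref{it:PosetZeroIsJacZ} and~\ref{it:PosetZeroIsJacJ}.

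Finally I would obtain the equivalence of~\ref{it:PosetZeroIsJacJ} and~\ref{it:PosetZeroIsJacAx} by unwinding the description of $\Jac(I)$ recalled in Remark~\ref{re:RemindJac}: $g\in\Jac(f_1,\dots,f_n)$ says that for every $h\in A$ there exist $z\in A$ and $z_0\in(f_1,\dots,f_n)$ with $(1+hg)z=1+z_0$, i.e.\ that $1+hg$ becomes a unit in $A/(f_1,\dots,f_n)$, which is condition~\ref{it:PosetZeroIsJacAx}; rewriting $z_0$ as $y_1f_1+\cdots+y_nf_n$ with $y_i\in A$ turns this statement verbatim into the displayed $\Lring$-formula, so that formula defines the relation $\{f_1=0\}\cap\cdots\cap\{f_n=0\}\subseteq\{g=0\}$, and in particular $f\sqsubseteq g$ is $0$-definable on taking $n=1$. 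I do not anticipate a real obstacle in stitching these pieces together; the only step requiring genuine care is the case analysis in the first paragraph, and specifically the observation that case (c) is not reducible to polynomial $u,v$ and hence needs closure of $A$ under composition with all definable continuous binary functions — everything after that is bookkeeping.
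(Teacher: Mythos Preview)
Your proposal is correct and follows essentially the same route as the paper's proof: verify that Proposition~\ref{pr:Kuv} supplies suitable $u,v$ in each of the cases (a), (b), (c) so that Proposition~\ref{pr:Auv} applies, then combine parts \ref{it:AuvMain} and \ref{it:AuvMinor} for the equivalence of \ref{it:PosetZeroIsJacZ} and \ref{it:PosetZeroIsJacJ}, and invoke Remark~\ref{re:RemindJac} for the equivalence with \ref{it:PosetZeroIsJacAx}. Your write-up is simply more explicit than the paper's about why $A$ is closed under composition with the relevant $u,v$ in each case.
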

\begin{proof}
The assumptions in Proposition~\ref{pr:Auv} hold by Proposition~\ref{pr:Kuv}. Hence
Proposition~\ref{pr:Auv}\ref{it:AuvMain},\ref{it:AuvMinor} imply the equivalence of \ref{it:PosetZeroIsJacZ} and \ref{it:PosetZeroIsJacJ}.
The equivalence of \ref{it:PosetZeroIsJacJ} and
\ref{it:PosetZeroIsJacAx} holds by Remark~\ref{re:RemindJac}.
\end{proof}

\begin{examples}
Let $X$ be a topological space. Then Theorem~\ref{th:PosetZeroIsJac} applies to the following rings $A$ of functions $X\lra K$.

\begin{enumerate}[(a)]
\item $K$ is an ordered field or a $p$\--valued field and
\begin{itemize}
\item $A$ is the ring of continuous functions $X\lra K$, or,
\item $X\subseteq K^n$ is definable in $K$ and $A$ is the ring of definable continuous functions $X\lra K$, or,
\item $X\subseteq K^n$ is open (definable) and $A$ is the ring of (definable) $k$-times differentiable functions $X\lra K$, or,
\item $X\subseteq K^n$ is a variety and $A$ is the ring of rational functions $X\lra K$ without
zeroes on $X$ (sometimes referred to as \textit{regular functions} in the literature).
\end{itemize}
In each case, condition (a) of Theorem~\ref{th:PosetZeroIsJac} applies.

\item $A$ is the ring of continuous functions $X\lra \C$, or, $X\subseteq \C^n$ and
$A$ is the ring of continuous semi-algebraic functions $X\lra \C$.
In both cases, condition (b) of Theorem~\ref{th:PosetZeroIsJac} applies.

\item $K$ is a valued field of characteristic $\neq 2$, furnished
with the valuation topology and
$A$ is the ring of continuous functions $X\lra K$, or,
$X\subseteq K^n$ is definable and
$A$ is the ring of definable continuous functions $X\lra K$.
In both cases, condition (c) of Theorem~\ref{th:PosetZeroIsJac} applies, where $\cO$ is the valuation ring of $K$.
\end{enumerate}
\end{examples}

\section{Basic properties of zero sets}
\label{se:prelim}

We collect a few basic facts, which will be used in the rest
of the paper. We continue to work with the set up of the introduction and section
\ref{se:notation}.

\begin{lemma}\label{le:dirac}
\begin{enumerate}
\item\label{it:def-norm}
There is $\nu_m\in\CF(K^m)$ such that
  $|\nu_m(x)|=\|x\|$ for every $x\in K^m$.
\end{enumerate}

\smallskip\noindent
Now let $X\subseteq K^m$ be a definable set, $a\in X$ and
 let $B$ be a closed ball (resp. $B_0$ an open ball) with radius $r\in K^\times$
(resp. $r_0\in K^\times$) and center $a$.
\begin{enumerate}[resume]
  \item\label{it:dirac-pt}\label{it:dirac-ball}\label{it:dirac-co-ball}
  Each of the sets $\{a\}$, $B$ and $B_0^c=K^m\setminus B_0$ are zero sets of functions from $C(X)$. We pick such functions and denote them by $\delta_a$, $\delta_B$   and $\delta_{B_0^c}$ respectively.
\ifoldversion\OldStart
    \item
      There is a function $\delta_a\in\CF(X)$ with zero set $\{x\}$.
    \item
      There is a function $\delta_B\in\CF(X)$ with zero set $B$.
    \item
      There is a function $\delta_{B_0^c}\in\CF(X)$ such that
      $\delta_{B_0^c}(x)=0\iff x\notin B_0$.\OldEnd\else\fi

    \item\label{it:dirac-B-B0}
      If $B\subseteq B_0$, then there is a function $\delta_{B,B_0}\in\CF(X)$ with values
      in $\cO$ such that $\delta_{B,B_0}$ vanishes on $B_0^c$ and
      $\{\delta_{B,B_0}=1\}=B$.
\ifoldversion\OldStart
$\delta_{B,B_0}(x)=1$ on $B$ and $\delta_{B,B_0}(x)=0\iff x\in B$.
\OldEnd\else\fi
\end{enumerate}
\end{lemma}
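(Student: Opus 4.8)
The statement to prove is Lemma~\ref{le:dirac}, which has four items. The plan is to construct each of the claimed functions explicitly, leaning on part~\ref{it:def-norm} (the existence of a definable norm-like function $\nu_m$) for all the later parts.

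For item~\ref{it:def-norm}, I would first recall that $\|x\|=\max(|x_1|,\dots,|x_m|)$. In the valued case $|\ |$ is (a multiplicative avatar of) the valuation, and there should already be a definable function realizing it; in the ordered case $|\ |$ is the ordinary absolute value, which is visibly continuous and definable. So I would set, say, $\nu_m(x)$ to be a coordinate $x_i$ of maximal absolute value — more precisely, one can take $\nu_m(x) = x_{i(x)}$ where $i(x)$ is the least index achieving the max; this is definable, and one checks it is continuous (the only issue is at points where the max is achieved by several coordinates, but there $|\nu_m|=\|x\|$ still holds and continuity of $|\nu_m|=\|\cdot\|$ is clear, while continuity of $\nu_m$ itself needs a small argument — actually it is cleaner to just assert the existence of a continuous definable $\nu_m$ with $|\nu_m(x)|=\|x\|$, e.g.\ $\nu_1(x)=x$ works for $m=1$, and for general $m$ one can use $\nu_m(x)=\nu_1(x_1)\cdots$ no — better: in the ordered case take $\nu_m(x)=\sqrt{x_1^2+\cdots+x_m^2}$ up to the equivalence $\|x\|\asymp$ this, but we need equality of $|\nu_m|$ and $\|x\|$, so I would instead piece together $\nu_m$ on the definable pieces $\{|x_i| \text{ is the max}\}$, gluing along their closed overlaps where the values agree in absolute value). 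I expect this to be the only part requiring genuine (if routine) care.

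**Items 2–4.** Given $\nu_m$, the rest is bookkeeping. For~\ref{it:dirac-pt}: $\{a\}$ is the zero set of $\delta_a(x)=\nu_m(x-a)$ (restricted to $X$), since $|\nu_m(x-a)|=\|x-a\|=0$ iff $x=a$. For the closed ball $B=B(a,r)$: its complement $\{x : \|x-a\|>r\}$ needs to be cut out, so I would take $\delta_B$ to be a function that vanishes exactly on $B$; in the ordered case $\delta_B(x)=\max(0,|\nu_m(x-a)|-r)$ does it — but that uses the order, which is fine in the ordered case; in the valued case one wants $\delta_B$ with $\{\delta_B=0\}=B$, and here balls are clopen, so $\delta_B$ can be taken to be the characteristic-type function $\nu_1$ composed with a definable map sending $B$ to $0$ and its complement off $0$ — e.g.\ $\delta_B(x)=\nu_m(x-a)\cdot\chi_{B^c}(x)$ hmm, cleaner: since $B$ is clopen and definable, and $X\setminus B$ is clopen in $X$, take $\delta_B$ to be $0$ on $B$ and constantly $1$ on $X\setminus B$; this is continuous because $B$ is open-and-closed. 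Similarly $\delta_{B_0^c}$ for the open complement $B_0^c$: in the valued case again use clopenness; in the ordered case use $\delta_{B_0^c}(x)=\max(0, r_0-|\nu_m(x-a)|)$ which vanishes exactly where $\|x-a\|\ge r_0$, i.e.\ on $B_0^c$. For~\ref{it:dirac-B-B0}, with $B\subseteq B_0$: I want $\delta_{B,B_0}$ valued in $\cO$, vanishing on $B_0^c$, with $\{\delta_{B,B_0}=1\}=B$. In the valued case, as $B$ and $B_0$ are clopen, simply let $\delta_{B,B_0}$ be $1$ on $B_0$ (hence on $B$) and $0$ on $B_0^c$ — wait, that has value set $\{0,1\}$ and $\{=1\}=B_0\supsetneq B$ possibly; so instead make it $1$ on $B$, $0$ off $B_0$, and interpolate through values in $\cO$ on the annulus $B_0\setminus B$ using a suitable definable continuous function of $\|x-a\|$ (possible since $\cO\ni 1$ and the order/valuation on $|K|$ lets us scale). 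In the ordered case one writes an explicit piecewise-linear expression in $|\nu_m(x-a)|$ clipped to $[0,1]$: something like $\delta_{B,B_0}(x)=\min\big(1,\max(0,(r_0-|\nu_m(x-a)|)/(r_0-r))\big)$ when $r<r_0$, which is $1$ on $B$, $0$ on $B_0^c$, and takes values in $[0,1]\subseteq\cO$.

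**Main obstacle.** The only real subtlety is constructing $\nu_m$ so that the identity $|\nu_m(x)| = \|x\|$ holds exactly (not just up to a bounded factor) while keeping $\nu_m$ itself continuous and definable; once that is in hand, every other item is a one-line explicit formula, with the ordered and valued cases handled separately (using clopenness of balls in the valued case and clipped piecewise-linear expressions in the ordered case). I would organize the write-up as: prove~\ref{it:def-norm} first, then dispatch items 2–4 uniformly where possible and by a short case split otherwise.
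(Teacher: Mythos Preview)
Your proposal is correct and follows essentially the same route as the paper: construct $\nu_m$ by selecting a coordinate of maximal absolute value, take $\delta_a(x)=\nu_m(x-a)$, and then handle $\delta_B$, $\delta_{B_0^c}$, $\delta_{B,B_0}$ by a case split---characteristic functions of clopen balls in the valued case, clipped piecewise-linear expressions in $\|x-a\|$ in the ordered case. Your caution about the continuity of $\nu_m$ is in fact better placed than the paper's ``easily seen to be continuous'': the first-index-achieving-the-max rule is discontinuous in the ordered case (e.g.\ at $(-1,1)\in K^2$), and the clean fix there is simply $\nu_m(x)=\max_i|x_i|\in K_{\geq 0}$, which is continuous, definable, and satisfies $|\nu_m(x)|=\|x\|$ on the nose; in the valued case the first-index rule is fine because the regions $\{|x_i|\text{ maximal}\}$ are clopen. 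You are also right to flag that taking $\chi_{B_0}$ for $\delta_{B,B_0}$ in the valued case gives $\{=1\}=B_0$ rather than $B$; the paper makes exactly that choice (which suffices for its later uses), but your annulus adjustment is what the stated lemma literally requires.
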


\begin{proof}
\ref{it:def-norm}.
  For every $x=(x_1,\dots,x_m)$ let $\nu_m(x)=x_1$ if $\|x\|=|x_1|$,
  $\nu_m(x)=x_2$ if $\|x\|=|x_2|>|x_1|$, and so on. It is obviously
  definable, and easily seen to be continuous on $K^m$.

\smallskip\noindent
\ref{it:dirac-pt} and \ref{it:dirac-B-B0}.
We may take $\delta_a(x)=\nu_m(x-a)$ restricted to $X$, using \ref{it:def-norm}. For the other functions, in the valued case we may take the indicator
  function of $X\setminus B$ for $\delta_B$, and the indicator function of $B_0$
  for $\delta_{B_0^c}$ as well as for $\delta_{B,B_0}$ (they are continuous because $B$  and $B_0$ are clopen).   In the ordered case we have $0<r_0<r$ and $|K|$
  identifies with the set of non-negative elements of $K$ so we may take
  $\delta_B(x)=\max\{0,\,\|x-a\|-|r|\}$, $\delta_{B_0^c}(x)=\max\{0,r-\|x-a\|\}$ and
  $\delta_{B,B_0}(x)=\max\{0,\min\{1,u(x)\}\}$ where $u(x)=(r-\|x-a\|)/(r-r_0)$.
\ifoldversion\OldStart   The first point follows from \ref{it:def-norm} with
  $\delta_a(x)=\nu_m(x-a)$ restricted to $X$.
  For the last points, in the valued case we can take the indicator
  function of $X\setminus B$ for $\delta_B$, and the indicator function of $B_0$
  for $\delta_{B_0^c}$ and $\delta_{B,B_0}$ (they are continuous because $B$
  and $B_0$ are clopen). In the ordered case $0<r_0<r$ and $|K|$
  identifies to the set of non-negative elements of $K$ so we can take
  $\delta_B(x)=\max\{0,\,\|x-a\|-|r|\}$, $\delta_{B_0^c}(x)=\max\{0,r-\|x-a\|\}$ and
  $\delta_{B,B_0}(x)=\max\{0,\min\{1,u(x)\}\}$ where $u(x)=(r-\|x-a\|)/(r-r_0)$.
\OldEnd\else\fi
\end{proof}

\begin{lemma}\label{le:PtIntIsol}
  There are formulas $\point(f)$ and $\inter(f,g,h)$ in $\Lring$,
  and a formula $\isol(s,p)$ in $\Lring\cup\{\cB\}$ such that for every
  definable set $X\subseteq K^m$:
  \begin{enumerate}
    \item\label{it:PtIntIsolPt}
      $\CF(X)\models\point(f)\iff f$ has a single zero in $X$.
    \item\label{it:PtIntIsolInt}
      $\CF(X)\models\inter(f,g,h)\iff\{f=0\}\cap\{g=0\}=\{h=0\}$.
    \item\label{it:PtIntIsolIsol}
      $(\CF(X),\cB)\models\isol(s,p)\iff\{p=0\}$ is an isolated point of
      $\{s=0\}$.
  \end{enumerate}
\end{lemma}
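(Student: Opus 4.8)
The first two items are immediate from what we have already set up. For item~\ref{it:PtIntIsolPt}, recall from Theorem~\ref{th:PosetZeroIsJac} that the relation $f\sqsubseteq g$ (meaning $\{f=0\}\subseteq\{g=0\}$) is $0$\--definable in the ring $\CF(X)$, since $K$ satisfies one of the conditions (a)--(c) there (it is an ordered or valued field, hence in particular not algebraically closed, or else a valued field of characteristic $\neq2$ whose topology is given by ideals of $\cO$, and $\CF(X)$ is closed under composition with definable continuous functions). Now ``$f$ has a single zero'' should be expressed as: $f$ has at least one zero, and the zero set of $f$ is minimal among non-empty zero sets. In ring terms: $f$ is not a unit (equivalently $\neg(\ZERO\sqsubseteq f)$ fails, i.e. $\UN\not\sqsubseteq f$), and for every $g$ with $g\sqsubseteq f$, either $g$ is a unit ($\UN\sqsubseteq g$, meaning $\{g=0\}=\0$) or $f\sqsubseteq g$ (meaning the zero sets coincide). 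The point is that $\{f=0\}$ is a singleton iff it is a non-empty closed set with no non-empty proper closed subset that is itself a zero set --- and by Lemma~\ref{le:dirac}\ref{it:dirac-pt} every singleton $\{a\}\subseteq X$ \emph{is} a zero set, so a non-empty proper ``sub-zero-set'' exists exactly when $\{f=0\}$ has more than one point. This gives the formula $\point(f)$.

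For item~\ref{it:PtIntIsolInt}, the formula $\inter(f,g,h)$ asserts $\{f=0\}\cap\{g=0\}=\{h=0\}$, i.e. $h\sqsubseteq f$, $h\sqsubseteq g$, and $\{h=0\}$ is the \emph{largest} zero set contained in both $\{f=0\}$ and $\{g=0\}$; but here we can be fully explicit using Proposition~\ref{pr:Auv}\ref{it:AuvPrincipal} (whose hypotheses hold by Proposition~\ref{pr:Kuv}): the intersection $\{f=0\}\cap\{g=0\}$ is itself the zero set of some element of $\CF(X)$, namely $u\circ(f,g)\cdot f+v\circ(f,g)\cdot g$. So $\inter(f,g,h)$ can simply say ``$h\sqsubseteq f$ and $h\sqsubseteq g$, and for all $k$, if $k\sqsubseteq f$ and $k\sqsubseteq g$ then $k\sqsubseteq h$'' --- this characterises $\{h=0\}$ as $\{f=0\}\cap\{g=0\}$ precisely because that intersection is attained as a zero set and hence is itself a legitimate witness for $k$. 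All of this is an $\Lring$\--formula since $\sqsubseteq$ is.

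The real work is item~\ref{it:PtIntIsolIsol}, and this is where the predicate $\cB$ (and hence $\cO$) genuinely enters. We want to say that $\{p=0\}$ is a single point $a$ which is isolated in $\{s=0\}$, i.e. there is an open ball $B$ around $a$ with $B\cap\{s=0\}=\{a\}$. First impose $\point(p)$, so $\{p=0\}=\{a\}$. To detect isolation we need to produce, definably, a ``bump'' function separating $a$ from the rest of $\{s=0\}$: concretely, $a$ is isolated in $\{s=0\}$ iff there exists $t\in\CF(X)$ bounded (i.e. $t\in\cB$) with $t(a)\neq0$ --- say $\{q=0\}\not\sqsubseteq t$ is the wrong phrasing; rather $t(a)$ is a unit value and $t$ vanishes on $\{s=0\}\setminus\{a\}$ while being continuous at $a$. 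The clean formulation: $a$ is isolated in $\{s=0\}$ iff there is $t\in\CF(X)$ with $\{s=0\}\cap\{t\neq0\}\supseteq\{a\}$, equivalently $\{s=0\}\setminus\{t=0\}=\{a\}$, together with a boundedness/continuity condition forcing $t$ not to ``escape'' near $a$; using Lemma~\ref{le:dirac}\ref{it:dirac-B-B0} (the functions $\delta_{B,B_0}$ with values in $\cO$, equal to $1$ on $B$ and vanishing off $B_0$) one sees that such a $t$ exists precisely when a small ball isolates $a$. So $\isol(s,p)$ will be: $\point(p)$, and $\exists t\,\big(t\in\cB\ \wedge\ \inter(s,\text{(something with zero set }\{t\neq0\}\text{'s complement)})\ldots\big)$ --- the existence of $t\in\cB$ with $\{t=0\}\cup\{a\}\supseteq\{s=0\}$ and $\{t=0\}\not\ni a$, phrased via $\sqsubseteq$ and $\point$. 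The main obstacle, and the point requiring care, is the direction ``such a bounded $t$ exists $\Rightarrow$ $a$ is isolated'': one must argue that if $a$ were a limit point of $\{s=0\}$, then any definable $t$ vanishing on $\{s=0\}\setminus\{a\}$ but not at $a$ would have to be discontinuous or unbounded at $a$ --- here boundedness of $t$ combined with continuity at $a$ forces, via a limiting argument along a sequence in $\{s=0\}$ tending to $a$, that $t(a)=\lim t=0$, a contradiction; the converse direction uses the explicit $\delta_{B,B_0}\in\cO$ from Lemma~\ref{le:dirac}. Writing the formula so that only $\cB$ (not $\cO\subseteq K$ directly) is referenced, and checking it is uniform in $X$, completes the argument.
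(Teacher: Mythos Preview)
Your treatment of items~\ref{it:PtIntIsolPt} and~\ref{it:PtIntIsolInt} is essentially the paper's: the lattice of zero sets is interpretable via $\sqsubseteq$, singletons are exactly the atoms (by Lemma~\ref{le:dirac}\ref{it:dirac-pt}), and intersections of zero sets are zero sets (by $\nu_2$ or Proposition~\ref{pr:Auv}\ref{it:AuvPrincipal}). Fine.

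For item~\ref{it:PtIntIsolIsol} your approach works but is built on a misconception, and it is needlessly heavy. You insist on a witness $t\in\cB$ and justify this by saying that in the converse direction (``witness exists $\Rightarrow$ isolated'') one needs boundedness to force $t(a)=0$ when $a$ is a limit point of $\{s=0\}\setminus\{a\}$. But that implication comes from \emph{continuity} alone: every $t\in\CF(X)$ is continuous, so if $t$ vanishes on $\{s=0\}\setminus\{a\}$ and $a$ lies in the closure of that set, then $t(a)=0$ automatically. Boundedness contributes nothing. Hence the predicate $\cB$ is not needed here, and the paper in fact produces $\isol$ as a pure $\Lring$\--formula (stronger than the statement demands):
\[
\isol(s,p)\ \equiv\ \point(p)\ \land\ p\sqsubseteq s\ \land\ \exists u\,\big(s\sqsubseteq pu\ \land\ s\not\sqsubseteq u\big).
\]
For the forward direction the paper takes $u=\nu_2(s,\delta_{B_0^c})$ for a small open ball $B_0$ isolating $p_0$, so that $\{u=0\}=\{s=0\}\cap B_0^c=\{s=0\}\setminus\{p_0\}$; then $s\sqsubseteq pu$ and $s\not\sqsubseteq u$. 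For the converse, $s\sqsubseteq pu$ gives $\{s=0\}\setminus\{p_0\}\subseteq\{u=0\}$, and $s\not\sqsubseteq u$ then forces $u(p_0)\neq 0$, so $\{u\neq 0\}$ is an open neighbourhood of $p_0$ disjoint from $\{s=0\}\setminus\{p_0\}$.

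So your plan is salvageable (drop the $\cB$ clause and the talk of boundedness), but the paper's formulation is both simpler and sharper: it shows $\isol$ lives in $\Lring$, which your route, as written, obscures.
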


\begin{proof}
  Let $\displaystyle\ZS(X)=\big\{\{s=0\}\tq s\in\CF(X)\big\}$ ordered by
  inclusion. It is a bounded distributive lattice. Indeed for every
  $f,g\in\CF(X)$ we have
\begin{align*}
    \{f=0\}\cup\{g=0\}&=\{fg=0\}\cr
    \{f=0\}\cap\{g=0\}&=\{\nu_2(f,g)=0\},
\end{align*}
  where $\nu_2\in\CF(K^2)$ is the function given by Lemma~\ref{le:dirac} \ref{it:def-norm}.
  For every $a\in K^m$, $\{a\}=\{\delta_a=0\}$ where $\delta_a$ is given by
  Lemma~\ref{le:dirac} \ref{it:dirac-pt}, so the atoms of $(\ZS(X),\subseteq)$ are exactly the
  singletons. The two first points then follow from the fact that
  $(\ZS(X),\subseteq)$ is uniformly interpretable in $\CF(X)$ as the
  quotient ordered set of the preorder $\sqsubseteq$, see Theorem~\ref{th:PosetZeroIsJac}.

  For item \ref{it:PtIntIsolIsol}, assume that $\{p=0\}=\{p_0\}$. If $p_0$ is
  an isolated point of $\{s=0\}$, let $B_0$ be an open ball with center
  $p_0$ which is disjoint from $S=\{s=0\}\setminus\{p_0\}$.
  By Lemma~\ref{le:dirac} \ref{it:dirac-pt} there is a function $\delta_{B_0^c}\in\CF(X)$ such that $\delta_{B_0^c}(x)=0$
  if and only if $x\notin B_0$. Finally let $u=\nu_2(s,\delta_{B_0^c})$ where $\nu_2$ is
  given by Lemma~\ref{le:dirac} \ref{it:def-norm}. Then $\{u=0\}=\{s=0\}\cap B_0^c=S$, hence $s\sqsubseteq
  pu$ and $s\not\sqsubseteq u$. Conversely if there is $u\in\CF(X)$ such that $s\sqsubseteq
  pu$ and $s\not\sqsubseteq u$ then $\{u\neq0\}$ is a neighborhood of $p_0$ disjoint
  from $S$ hence $p_0$ is an isolated point $\{s=0\}$. So this property
  is axiomatized by the formula
\[
\isol(s,p)\equiv\point(p)\land p\sqsubseteq s \land \exists u\big(s\sqsubseteq pu\land s\not\sqsubseteq u\big).
\]
\end{proof}

\begin{proposition}\label{pr:local-formulas}
  For every parameter-free formula $\varphi(y)$ in $\Lring$ (resp.
  $\Lring\cup\{\cO\}$) in $k$ free variables, there is a parameter-free
  formula $[\varphi]$ in $\Lring$ (resp. $\Lring\cup\{\cB\}$) in $k+1$ variables
such that for every definable set $X\subseteq K^m$,
  every $h\in\CF(X)^k$ and every $s\in\CF(X)$ we have
\[
    (\CF(X),\cB)\models[\varphi](h,s)\iff \forall x\in\{s=0\},\  K \models\varphi(h(x)).
\]
\end{proposition}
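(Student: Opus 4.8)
The idea is to express the quantifier ``$\forall x\in\{s=0\}$'' by quantifying over tuples of functions in $\CF(X)$ whose joint behaviour on the zero set $\{s=0\}$ simulates an arbitrary point of that set together with the values $h(x)$. More precisely, I would proceed by induction on the structure of $\varphi$, after first isolating the key observation that for any $a\in\CF(X)^k$, $g\in\CF(X)$ and any point $x\in X$, the value $K\models\varphi(a(x))$ is ``witnessed'' at $x$ by the vanishing/non-vanishing pattern of finitely many polynomial terms in the $a_i$, together with membership of those terms in $\cO$ in the $\Lring\cup\{\cO\}$ case. The relation ``$\{s=0\}\subseteq\{t=0\}$'' is $0$-definable in $\CF(X)$ by Theorem~\ref{th:PosetZeroIsJac} (it is $f\sqsubseteq g$ with $f=s$), and likewise $\{s=0\}\cap\{t=0\}=\{u=0\}$ is captured by $\inter$ of Lemma~\ref{le:PtIntIsol}, so all Boolean combinations of zero-set conditions along $\{s=0\}$ are available. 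The predicate ``$t(x)\in\cO$ for all $x\in\{s=0\}$'' is captured using $\cB$: it says $\exists b\in\cB$ with $\{s=0\}\subseteq\{t-b=0\}$, i.e. $s\sqsubseteq t-b$ for some $b\in\cB$.

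The induction would run as follows. For an atomic formula $\varphi(y)$ of the form $p(y)=0$ (resp. $p(y)\in\cO$) with $p\in\ZZ[y]$, set $[\varphi](h,s)$ to be $s\sqsubseteq p(h)$ (resp. $\exists b\,(b\in\cB\land s\sqsubseteq p(h)-b)$); this works because $p(h)\in\CF(X)$ is just the polynomial applied pointwise, and $\forall x\in\{s=0\}\ p(h(x))=0$ is exactly $\{s=0\}\subseteq\{p(h)=0\}$. Conjunction is immediate: $[\varphi\land\psi]=[\varphi]\land[\psi]$. The subtle cases are negation and the existential quantifier. For negation, $\neg[\varphi](h,s)$ is \emph{not} $[\neg\varphi](h,s)$, since the first says ``not for all $x$'' while the second says ``for all $x$, not''. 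The standard device is to split $\{s=0\}$: $[\neg\varphi](h,s)$ should hold iff for every $x\in\{s=0\}$ we have $\neg\varphi(h(x))$, and this in turn is equivalent to: for every function $s'$ with $\{s'=0\}\subseteq\{s=0\}$ and $\{s'=0\}$ an atom (a singleton $\{x\}$, detected by $\point(s')$), we have $\neg[\varphi](h,s')$. Dually $[\varphi]$ itself can be re-expressed via atoms, and then $[\neg\varphi](h,s)\equiv\forall s'\,(\point(s')\land s'\sqsubseteq s\to \neg\varphi\text{-at-}s')$ — but to make this rigorous without circularity one shows simultaneously, by induction, that there is a formula $\langle\varphi\rangle(h,s')$ valid when $\{s'=0\}$ is a single point $\{x\}$, equivalent to $K\models\varphi(h(x))$, and then sets $[\varphi](h,s)\equiv\forall s'\,(\point(s')\land s'\sqsubseteq s\to\langle\varphi\rangle(h,s'))$.

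The pointwise version $\langle\varphi\rangle$ is where the real content sits, and it is handled by the following observation: if $\{s'=0\}=\{x\}$ then for any $t\in\CF(X)$, the value $t(x)$ is the unique constant $c\in K$ with $s'\sqsubseteq t-c$, i.e.\ with $\{s'=0\}\subseteq\{t-c=0\}$; and the set of such witnessing constants, read through the constant functions inside $\CF(X)$ (which are definable as those $t$ with $\{s''=0\}\subseteq\{t-t'=0\}$ impossible to violate, or more simply via the subfield of $\CF(X)$ that is $K\cdot 1$), lets us pull $\varphi(y)$ back: $\langle\varphi\rangle(h,s')$ asserts the existence of constant functions $c_1,\dots,c_k$ with $s'\sqsubseteq h_i-c_i$ for each $i$, such that $\varphi(c_1,\dots,c_k)$ holds in $K$ — and the latter is again expressed by $s'\sqsubseteq p(c)$-type conditions, or, since the $c_i$ are constants, directly: $\varphi$ with free variables interpreted in the copy of $K$ sitting as constants, which is $0$-definable. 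One must be slightly careful that ``$t$ is a constant function'' is definable in $\CF(X)$; this follows because $X$ (being a definable set over a field) is nonempty and $t$ constant iff for all $x,y\in X$, $t(x)=t(y)$, equivalently the zero set of $t-t(x_0)$ is all of $X$ for some, hence any, basepoint — expressible as: $\exists$ no atom $s'\sqsubseteq$ (something)... Here the existence of $\delta_a$ from Lemma~\ref{le:dirac} and the atom structure of $\ZS(X)$ give a clean formula.

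I expect the main obstacle to be exactly this negation/existential bookkeeping: setting up the double induction (on $\varphi$, simultaneously for $[\varphi]$ over zero sets and $\langle\varphi\rangle$ over singleton zero sets) so that it closes, and verifying that ``$t$ is a constant function'' and ``the constant value of a function on a singleton zero set'' are uniformly definable in $\CF(X)$ (resp.\ $(\CF(X),\cB)$ when $\cO$ occurs), independently of $X$ and $m$. Once those auxiliary definability facts are in place — all of which reduce, via Theorem~\ref{th:PosetZeroIsJac} and Lemma~\ref{le:PtIntIsol}, to the definability of $\sqsubseteq$, $\point$, $\inter$ and the use of $\cB$ to express ``values in $\cO$'' — the induction on the logical complexity of $\varphi$ is routine.
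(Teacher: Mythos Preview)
Your inductive scheme (atomic formulas via $\sqsubseteq$ and $\cB$, conjunction directly, negation and existential quantifiers reduced to atoms via $\point$) is essentially the paper's approach. The genuine gap is your plan for $\langle\varphi\rangle$: the set of constant functions is \emph{not} $0$-definable in $\CF(X)$ in general, so you cannot ``interpret $\varphi$ in the copy of $K$ sitting as constants''. For an explicit obstruction, let $K$ be a real closed field with a nontrivial field automorphism $\sigma$ (for instance the real closure of $\QQ(t)$ with $t$ positive and infinite, where $t\mapsto 2t$ extends) and let $X\subseteq K$ consist of two points. Then $\CF(X)\cong K\times K$, the constants form the diagonal, and $(a,b)\mapsto(\sigma(a),b)$ is a ring automorphism of $K\times K$ that does not preserve the diagonal. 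Your sketch with $\delta_a$ and the atom structure cannot overcome this.

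The fix, which is exactly what the paper does, is to drop the restriction to constants and quantify over all of $\CF(X)$. One sets
\[
[\lnot\varphi](h,s)\equiv\forall p\,\bigl(\point(p)\land p\sqsubseteq s\to\lnot[\varphi](h,p)\bigr),
\qquad
[\exists w\,\varphi](h,s)\equiv\forall p\,\bigl(\point(p)\land p\sqsubseteq s\to\exists w\,[\varphi](w,h,p)\bigr).
\]
The key observation is that once $p$ is an atom with unique zero $x$, the truth of $[\varphi](w,h,p)$ depends only on the values $w(x)$ and $h(x)$. Hence in the existential clause, for the right-to-left direction a constant function with the required value $c\in K$ serves as witness (one only needs such a function to \emph{exist}, not the set of all constants to be definable), while left to right any witnessing $w\in\CF(X)$ yields $w(x)\in K$ as the witness in $K$. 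Your auxiliary $\langle\varphi\rangle$ then collapses to $[\varphi]$ evaluated at atoms, and no separate induction---and no definability of constants---is needed.
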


\begin{proof}
  If $\varphi(y)$ is a polynomial equation $P(y)=0$ ($y$ a single
  variable), we may take
  $[\varphi](y,s)$
  as $s\sqsubseteq P(y)$. If $\varphi(y)$ is the formula $P(y)\in\cO$,  we may
  take $[\varphi](y,s)$ as
\[
    \forall p\,\bigg(\point(p)\land p\sqsubseteq s\ \longrightarrow\ \exists f\,\big(
    f\in\cB\land p\sqsubseteq P(y)-f\big)\bigg),
\]
expressing the fact that a function $P(y)$ has values in $\cO$ on the
zero set of $s$ just if for all $x\in \{s=0\}$, $P(y)$ agrees with a
function $f\in\cB$ at $x$.

\smallskip\noindent
This proves the result for atomic formulas. For arbitrary formulas,
  $[\varphi]$ is defined by induction as follows.
  \begin{itemize}
    \item
      $[\varphi\land\psi]$ is $[\varphi]\land[\psi]$.
    \item
      $[\lnot\varphi](y,s)$ is
      $\forall p\, \big((\point(p)\land p\sqsubseteq s)\rightarrow \lnot[\varphi](y,p)\big)$.
    \item
      $[\exists w\,\varphi](y,s)$ is $\forall p\, \big((\point(p)\land p\sqsubseteq s)\rightarrow
      \exists w\,[\varphi](w,y,p)\big)$.
  \end{itemize}
  We only show the right to left implication of the claimed equivalence in the case of an existential quantifier,
  all other implications are straightforward. Take
  $h\in\CF(X)^k$, $s\in\CF(X)$ and assume that $K\models\exists w\,\varphi(w,h(x))$ for
  every $x\in\{s=0\}$. We need to show that
  $(\CF(X),\cB)\models\forall p\, \big((\point(p)\land p\sqsubseteq s)\rightarrow
      \exists w\,[\varphi](w,y,p)\big)$.
  So take $p\in\CF(X)$ whose unique zero $p_0$ satisfies $s(p_0)=0$.
  By assumption there is some $c\in K$ with $K\models \varphi(c,h(p_0))$.
  Let $h_0\in \CF(X)$ be the constant function with value $c$.
  Now, for every $z\in X$ and every $q\in\CF(X)$ that has a unique zero $z$,
  if $p(z)=0$ then  $K\models \varphi(h_0(z),h(z))$. By induction, this means
  $(\CF(X),\cB)\models [\varphi](h_0,h,p)$, as required.

\smallskip
Finally, by choice of the formulas $[\varphi]$ we see that $[\varphi]$ is an $\Lring$-formula, if
$\phi$ is an $\Lring$-formula.
\end{proof}

\begin{remark}\label{re:def-O3}
  If $\cO$ is definable in $K$ by a formula $\varphi(x, a)$ in $\Lring$,
  where $a$ is an $n$\--tuple of parameters from $K$, then Proposition~\ref{pr:local-formulas} applied to $\varphi(x, y)$ implies that
  $\cB$ is definable in $\CF(X)$  by the
$\Lring$-formula $[\varphi](x,a^*,0)$
with parameters $a^*$, the constant function of $\CF(X)^n$ with value $a$.
  There are plenty of such fields with definable orders or valuations
  (including all the non-algebraically closed local fields):
  \begin{itemize}[itemsep=0pt]
    \item
      real-closed and $p$\--adically closed fields;
    \item
      the field of rational numbers (by Lagrange's Four Squares Theorem);
    \item
      one-variable functions fields over number fields
      \cite{mill-shla-2017};
    \item
      dp-minimal valued fields that are not algebraically closed
      \cite{john-2015};
    \item
      the field of Laurent series $F((t))$ with the natural valuation
      \cite{ax-1965};
  \end{itemize}
  and numerous others: see \cite{fehm-jahn-2017} for a summary on
  Henselian valuation rings definable in their fraction field.
\end{remark}

Having arbitrarily many germs at a point is a local property, but it
can be made a bit more global as follows.

\begin{property}\label{py:mdir-global}
  Let $X\subseteq K^m$ be a definable set and let $p_0\in X$. Then $X$ has
  arbitrarily many germs at $p_0$ (see Definition~\ref{de:vanish-on-sep-germs}) if and only if for every positive
  integer $k$ there exist $k$ functions in $\CF(X)$ that vanish on
  separated germs at $p_0$ with separating functions in
  $\CF(X\setminus\{p_0\})$.
\end{property}

\begin{proof}
  One implication is obvious. For the converse, assume that a
  definable neighborhood $U$ of $p_0$ in $X$ is given together with
  $v_1,\dots,v_k\in\CF(U)$ that vanish on separated germs at $p_0$, and
  with corresponding separating functions $d_1,\dots,d_k\in\CF(U\setminus\{p_0\})$.
  Restricting $U$ if necessary, by continuity at $p_0$, we may assume
  that each $v_i$ is bounded on $U$ and that $U=B_0\cap X$ for some open
  ball $B_0$ with center $p_0$. Let $B$ be a closed ball with center
  $p_0$ contained in $B_0$.
  By Lemma~\ref{le:dirac} \ref{it:dirac-B-B0} we have a bounded function $h=\delta_{B,B_0}\in\CF(X)$
  with $h(x)=0$ on $X\setminus B_0$ and $h(x)=1$ on $B\cap X$.

  For each $i\leq k$ let $u_i(x)=0$ on $X\setminus B_0$ and $u_i(x)=h(x)v_i(x)$
  on $B_0\cap X$. Similarly let $\delta_i(x)=0$ on $X\setminus B_0$ and
  $\delta_i(x)=h(x)d_i(x)$ on $B_0\cap X\setminus\{p_0\}$. Each $u_i$ (resp. $\delta_i$) is
  continuous on $B_0\cap X$ and tends to $0$ at every point in $\partial(B_0\cap
  X)$ (because $v_i$ and $\delta_i$ are bounded) hence $u_i\in\CF(X)$ and
  $\delta_i\in\CF(X\setminus\{p_0\})$. Now let $s_i=\nu_2(\delta_B,u_i)$, where
  $\nu_2,\delta_B\in\CF(K^2)$ are given by Lemma~\ref{le:dirac} \ref{it:def-norm},\ref{it:dirac-B-B0}. By construction $\delta_i$ is bounded on $X\setminus\{p_0\}$,
  $\delta_i(x)=d_i(x)$ on $B\cap X\setminus\{p_0\}$ and $\{s_i=0\}$ equals $\{v_i=0\}\cap B$.
  So the functions $s_i$, $\delta_i$ inherit from $v_i$, $d_i$ all the
  properties (S1)--(S4) of functions vanishing on
  separated germs, {\it cf}. Definition~\ref{de:vanish-on-sep-germs}.
\end{proof}

\section{Constructing integers using limit values and chunks}
\label{se:lim-chunk}

The present section is devoted to our first main results of
interpretability and definability. Our construction is based on the
following subset of $K$. Let $X\subseteq K^m$ be a definable set. Let $s,p\in\CF(X)$ be
such that the zero-set of $p$ is a single point $p_0$ and $s$ vanishes
on a germ at $p_0$. Then for all $f,g\in\CF(X)$ for which
$g$ has no zeroes in the set
$S=\{s=0\}\setminus\{0\}$, we consider the continuous function $f/g$ on $S$ and write
$\Gamma$ for its graph. We define
\[
L_{s,p}(f/g)=\big\{l\in K\tq (p_0,l)\in\overline{\Gamma}\big\}.
\]

\noindent
Informally, $L_{s,p}$ is the set of ``limit values'' at $p_0$ of $f/g$
restricted to $S$. Note that this is always a closed definable subset
of $K$.
We will also consider the following relations on $\CF(X)$. Note that
they are definable in $\Lring\cup\{\cB\}$ using Proposition~\ref{pr:local-formulas}.
\begin{displaymath}
  |f|\leq_s|g| \iff \{s=0\}\subseteq\{|f|\leq|g|\}
\end{displaymath}
\begin{displaymath}
  |f|<_s|g| \iff \{s=0\}\subseteq\{|f|<|g|\}.
\end{displaymath}

\begin{lemma}\label{le:lim-form}
  There is a parameter-free formula $\limit(f,g,h,s,p)$ in
  $\Lring$ such that for any definable set $X\subseteq K^m$, we have
  $\CF(X)\models\limit(f,g,h,s,p)$ if and only if
  \begin{itemize}[itemsep=0pt]
  \item   the zero-set of $p$ is a single point $p_0$,
  \item $s$ vanishes on a germ at $p_0$,
 \item  $g$ has no zeroes in the set $S=\{s=0\}\setminus\{p_0\}$ and
 \item $h(p_0)\in L_{s,p}(f/g)$.
  \end{itemize}

\end{lemma}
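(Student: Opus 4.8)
The goal is to express, by a first-order formula in $\Lring$ over $\CF(X)$, that $h(p_0)$ is a limit value at $p_0$ of the function $f/g$ restricted to $S=\{s=0\}\setminus\{p_0\}$, under the side conditions that $\{p=0\}=\{p_0\}$, that $s$ vanishes on a germ at $p_0$, and that $g$ has no zero on $S$. The first three side conditions are already available: $\point(p)$ from Lemma~\ref{le:PtIntIsol}, the relation $p\sqsubseteq s$ together with $\lnot\isol(s,p)$ expresses that $p_0$ is a non-isolated zero of $s$, and ``$g$ has no zero on $S$'' is the relation $\inter(s,g,p)$ (i.e. $\{s=0\}\cap\{g=0\}=\{p_0\}$) — all of these are definable in $\Lring\cup\{\cB\}$, and in fact in $\Lring$ where the side conditions do not mention $\cO$. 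So the real content is the clause $h(p_0)\in L_{s,p}(f/g)$.

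**Key step: rephrasing the limit condition topologically.** By definition, $h(p_0)\in L_{s,p}(f/g)$ means $(p_0,h(p_0))\in\overline{\Gamma}$ where $\Gamma\subseteq S\times K$ is the graph of $f/g$ on $S$. Because $K$ carries a definable topology with definable bases of balls around points, this closure condition can be spelled out as: for every open ball $B'$ around $h(p_0)$ and every open ball $B$ around $p_0$, there is a point $x\in S\cap B$ with $(f/g)(x)\in B'$, equivalently $f(x)-h(x)g(x)$ small relative to $g(x)$ near $p_0$ — more precisely, for every $\eps>0$ (resp. every valuation level), $\{x\in S\st |f(x)-h(p_0)g(x)|<\eps|g(x)|\}$ accumulates at $p_0$. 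The plan is to encode ``a definable subset $T$ of $X$ accumulates at $p_0$'' as: $T\setminus\{p_0\}\neq\0$ and $p_0\in\overline{T}$, which in the lattice $\ZS(X)$ of zero sets is ``$\{p_0\}\subseteq\overline{T}$ and $T\not\subseteq\{p_0\}$'', and the closure of a zero set is again expressible since $\overline{\{u=0\}}=\{u=0\}$ already (zero sets are closed), so one instead works directly with the approximating zero sets. Concretely: $h(p_0)$ is a limit value iff for all $q\in\CF(X)$ with $\point(q)$ and $q\sqsubseteq s$ (so the zero $q_0$ of $q$ lies on $\{s=0\}$, playing the role of a generic point accumulating to $p_0$) and for all ``error bounds'' presented as functions, one can find a zero of the combined data close to $p_0$; this is the standard $\forall$-ball $\exists$-point unwinding, carried out with zero sets in place of open sets and with $\nu_2$ from Lemma~\ref{le:dirac}\ref{it:def-norm} used to intersect zero sets.

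**Using Proposition~\ref{pr:local-formulas} to finish.** The cleanest route, and the one I expect the authors to take, is this: the condition ``$h(p_0)\in L_{s,p}(f/g)$'' is a first-order property of the point $p_0$ referring only to the values of $f,g,h$ on $X$ near $p_0$ and to the ordered-field (or valued-field) structure of $K$; but it is not purely a property of $h(p_0)$ — it quantifies over points of $S$. So one cannot apply Proposition~\ref{pr:local-formulas} directly to a formula $\varphi(y)$. Instead one builds $\limit$ by hand out of the $\sqsubseteq$, $\point$, $\inter$, $\isol$ relations and the relations $|f|\leq_s|g|$, $|f|<_s|g|$ introduced just above the lemma: the assertion is that $p_0$ is a non-isolated point of $\{s=0\}$ (via $\point(p),p\sqsubseteq s,\lnot\isol(s,p)$), that $\{s=0\}\cap\{g=0\}\subseteq\{p=0\}$ (via $\inter$ or $\sqsubseteq$), and that for every function $e\in\CF(X)$ with $e(p_0)\neq0$ — witnessed by $\lnot(p\sqsubseteq e)$ — the zero set of $s$ together with the constraint $|f-hg|<|eg|$ still accumulates at $p_0$, i.e. the point $p_0$ is not isolated in $\{s=0\}\cap\{|f-hg|\leq|eg|\}$, which by Lemma~\ref{le:dirac} has a defining function $\nu_2$ of $s$ and of a function with that zero set, and non-isolatedness is $\lnot\isol$. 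Quantifying $e$ over $\CF(X)$ realizes ``for all $\eps$'' since constant functions with arbitrarily small nonzero value at $p_0$ are available. One then checks the converse: if this holds, the graph $\Gamma$ meets every neighborhood of $(p_0,h(p_0))$, so $h(p_0)\in\overline{\Gamma}$.

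**Main obstacle.** The delicate point is handling the ``radius'' quantification correctly in both the ordered and the valued case simultaneously, and making sure the error-constraint set $\{x\in S\st |f(x)-h(x)g(x)|\leq|e(x)g(x)|\}$ is genuinely a zero set in $\CF(X)$ (so that $\point$/$\isol$ apply to it) — this needs the lattice structure of $\ZS(X)$ from Lemma~\ref{le:PtIntIsol} together with the fact, from Lemma~\ref{le:dirac}, that sets of the form $\{|a|\leq|b|\}$ are zero sets (one can write $\{|a|\leq|b|\}=\{\nu_2(a,b)\cdot(\text{something})=0\}$ in the valued case using indicator functions, and $=\{\max(0,|a|-|b|)=0\}$ in the ordered case). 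A further subtlety is that one must insist $g$ has no zero on $S$ so that $f/g$ is genuinely continuous there; this is why the side condition $\{s=0\}\cap\{g=0\}=\{p=0\}$ (i.e. $\inter(s,g,p)$) is built into the formula. Once these bookkeeping issues are discharged, $\limit$ is assembled as a Boolean combination of $\Lring$-formulas, hence is an $\Lring$-formula, proving the lemma.
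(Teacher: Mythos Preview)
Your overall strategy matches the paper's: encode the three side conditions with $\point$, $\isol$, $\inter$, and then unwind the closure condition $h(p_0)\in L_{s,p}(f/g)$ by an $\forall\varepsilon$--style quantification over error bounds. There are, however, two points where your sketch diverges from the paper, one of them a genuine slip.

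\smallskip
\textbf{The side condition on $g$.} Your encoding of ``$g$ has no zero on $S$'' as $\inter(s,g,p)$ asserts $\{s=0\}\cap\{g=0\}=\{p_0\}$, which in particular forces $g(p_0)=0$. But the lemma allows $g(p_0)\neq 0$, in which case $\{s=0\}\cap\{g=0\}=\emptyset$. The paper therefore uses the disjunction $\inter(g,s,p)\lor\inter(g,s,1)$. This is a small but real error in your formula.

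\smallskip
\textbf{Encoding the limit.} For the clause $h(p_0)\in L_{s,p}(f/g)$ the paper does \emph{not} form the constraint set $\{s=0\}\cap\{|f-hg|\leq|eg|\}$ as a zero set and then apply $\lnot\isol$. Instead it quantifies universally over \emph{two} auxiliary functions $\varepsilon,v$ with $\varepsilon(p_0)\neq 0$ and $v(p_0)\neq 0$: the function $\varepsilon$ is your error bound, while $v$ encodes the punctured neighborhood $\{v\neq 0\}$. The existential witness is a point function $q$ with $\point(q)$, $q\not\sqsubseteq pv$ (so its zero $q_0$ lies in $\{v\neq 0\}\setminus\{p_0\}$), $q\sqsubseteq s$, and $|f-gh|\leq_q|g\varepsilon|$. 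The inequality is checked only at the single point $q_0$ via the relation $|\cdot|\leq_q|\cdot|$, so one never needs $\{|f-hg|\leq|eg|\}$ to be a zero set globally. Your $\lnot\isol$ route can be repaired, but it needs two fixes you do not carry out: first, multiply by $p$ so that $p_0$ actually lies in the set (otherwise $\lnot\isol$ holds vacuously when $p_0\notin\{|f-hg|\leq|eg|\}$, which can happen, e.g.\ when $g(p_0)=0\neq f(p_0)$); second, exhibit a function in $\CF(X)$ with zero set $\{|f-hg|\leq|eg|\}$, uniformly in the ordered and valued cases. The paper's pointwise $\forall v\,\exists q$ formulation sidesteps both issues.
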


\begin{proof}
  The first three properties are defined by the conjunction
  $\chi(g,s,p)$
  of the formulas in Lemma~\ref{le:PtIntIsol}. Hence we define
  $\chi(g,s,p)$ as
  \begin{displaymath}
    \point(p)\land \lnot\isol(s,p) \land
    \big(\inter(g,s,p)\lor \inter(g,s,1)\big).
  \end{displaymath}
  Let $f,g,h,s,p\in\CF(X)$ be such that $\CF(X)\models\chi(g,s,p)$ and let
  $p_0$ be the zero of $p$. Pick $\tau\in K^\times$ with $|\tau|<1$. For
  every $\varepsilon,v\in\CF(X)$ with $\varepsilon(p_0)\neq0$ and $v(p_0)\neq0$, by  continuity, there is an open ball $B_0$ centered at $p_0$ such that
  $B_0\cap X\subseteq\{v\neq0\}$ and for every $x\in B_0\cap X$, $|\varepsilon(x)|>|\tau\varepsilon(p_0)|$ and
  $|h(x)-h(p_0)|<|\tau\varepsilon(p_0)|$. So if $h(p_0)\in L_{s,p}(f/g)$, then there is a
  point $q_0\in B_0\cap X\setminus\{p_0\}$ with
  \begin{displaymath}
    s(q_0)=0\mbox{ and }
    \left|\frac{f(q_0)}{g(q_0)}-h(q_0)\right|\leq\big|\varepsilon(q_0)\big|.
  \end{displaymath}
  Note that $q_0\in B_0\setminus\{p_0\}$ implies that $q\not\sqsubseteq pv$, and that $s(q_0)=0$
  implies $g(q_0)\neq0$, so the second condition above is equivalent
  to $|f(q_0)-g(q_0)h(q_0)|\leq|g(q_0)\varepsilon(q_0)|$.
  Conversely, if there are such points $q_0$ for any $\varepsilon,v\in\CF(X)$
  with $\varepsilon(p_0)\neq0$ and $v(p_0)\neq0$, then setting $v=\delta_{B_0^c}$
  for any open ball $B_0$ centered at $p_0$ (with $\delta_{B_0^c}$ given by
  Lemma~\ref{le:dirac}\ref{it:dirac-co-ball}), we get that
  $h(p_0)\in L_{s,p}(f/g)$. Thus we can take for $\limit(f,g,h,s,p)$ the
  conjunction of $\chi(g,s,p)$ and the formula
  \begin{displaymath}
    \forall \varepsilon,\, v\, \big(p\not\sqsubseteq v\varepsilon \to
    \exists q\,\big[ \point(q)\land q\not\sqsubseteq pv \land q\sqsubseteq s\land |f-gh|\leq_q|g\varepsilon|\big]\big).
  \end{displaymath}
\end{proof}

\begin{lemma}\label{le:limits}
  Let $l_1,\dots,l_k\in K$ and let $X\subseteq K^m$ be a definable set. Let $p\in X$ be such
  that $p$ has a single zero $p_0$ at which $X$ has arbitrarily many
  germs.
  Let $s_1,\dots,s_k\in\CF(X)$ be vanishing on separated germs at $p_0$ and let
  $S_i=\{s_i=0\}\setminus\{p_0\}$. Then there are $f,g\in\CF(X)$ with
  $g(x)\neq0$ on $S=S_1\cup\cdots\cup S_k$ such that the restriction of $f/g$ to each
  $S_i$ has constant value $l_i$. In particular:
  \begin{enumerate}
    \item\label{it:limits-Lspfg}
      $\{l_1,\dots,l_k\}=L_{s,p}(f/g)$, with $s=s_1s_2\cdots s_k$.
    \item\label{it:limits-h}
      $\displaystyle\{l_1,\dots,l_k\}=\big\{l(p_0)\tq l\in\CF(X)$ and
      $p_0\in\overline{S\cap\{f=gl\}}\big\}$.
  \end{enumerate}
\end{lemma}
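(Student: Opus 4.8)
The plan is to build $f$ and $g$ directly out of the separating functions $\delta_1,\dots,\delta_k$ provided by Definition~\ref{de:vanish-on-sep-germs}. First I would, possibly after shrinking a definable neighbourhood $U$ of $p_0$ and replacing the $s_i$ by their restrictions (using Property~\ref{py:mdir-global} to push everything back to $\CF(X)$ and $\CF(X\setminus\{p_0\})$), fix separating functions $\delta_1,\dots,\delta_k\in\CF(U\setminus\{p_0\})$ satisfying (S1)--(S4): the $S_i=\{s_i=0\}\setminus\{p_0\}$ are pairwise disjoint, each $\delta_i$ is bounded on $U\setminus\{p_0\}$, and for $i\ne j$ the function $\delta_i$ is constantly $1$ on $S_i$ and constantly $0$ on $S_j$. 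The natural candidate is then
\[
  g^\ast=\delta_1+\dots+\delta_k,\qquad f^\ast=l_1\delta_1+\dots+l_k\delta_k,
\]
both continuous on $U\setminus\{p_0\}$. On each $S_i$ we have $\delta_i\equiv1$ and $\delta_j\equiv0$ for $j\ne i$, so $g^\ast\equiv1$ and $f^\ast\equiv l_i$ there; in particular $g^\ast$ has no zero on $S=S_1\cup\cdots\cup S_k$ and $f^\ast/g^\ast$ restricted to $S_i$ is the constant $l_i$.

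The next step is to repair the fact that $f^\ast,g^\ast$ live only on $U\setminus\{p_0\}$ and need not extend continuously to all of $X$. Here I would follow exactly the truncation trick used in the proof of Property~\ref{py:mdir-global}: choose an open ball $B_0$ with centre $p_0$ with $B_0\cap X\subseteq U$, a closed ball $B\subseteq B_0$ centred at $p_0$, and the bounded function $h=\delta_{B,B_0}\in\CF(X)$ from Lemma~\ref{le:dirac}\,\ref{it:dirac-B-B0}, which vanishes off $B_0$ and equals $1$ on $B\cap X$. Setting $g=h\cdot g^\ast$ and $f=h\cdot f^\ast$ on $B_0\cap X\setminus\{p_0\}$ and $g=f=0$ elsewhere (including at $p_0$), boundedness of the $\delta_i$ forces $f,g$ to tend to $0$ at every point of $\partial(B_0\cap X)$ and at $p_0$, so $f,g\in\CF(X)$. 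On $S\cap B$ nothing has changed, so after further shrinking $s_i$ to $\nu_2(\delta_B,s_i)$ as in Property~\ref{py:mdir-global} we may assume $S_i\subseteq B$; then $g\equiv1$, $f\equiv l_i$ on $S_i$, so $g$ has no zero on $S$ and $f/g\equiv l_i$ on each $S_i$, which is the main assertion. (If one prefers to keep the original $s_i$, one instead argues with a neighbourhood of $p_0$ and uses that limit values at $p_0$ only see arbitrarily small neighbourhoods.)

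For the two itemized consequences: with $s=s_1s_2\cdots s_k$ one has $\{s=0\}\setminus\{p_0\}=S_1\cup\cdots\cup S_k=S$, so the closure of the graph $\Gamma$ of $f/g$ over $S$ meets the fibre over $p_0$ in exactly the set of limit values of $f/g$ along $S$; since $f/g$ is locally constant equal to $l_i$ on $S_i$ and $p_0\in\overline{S_i}$ for every $i$ (each $s_i$ vanishes on a germ at $p_0$, condition (S2)), the limit set is precisely $\{l_1,\dots,l_k\}$, proving \ref{it:limits-Lspfg}. For \ref{it:limits-h}, if $l\in\CF(X)$ and $p_0\in\overline{S\cap\{f=gl\}}$ then, $S$ being the disjoint union of the $S_i$ and $p_0\in\overline{S}$, the point $p_0$ lies in $\overline{S_i\cap\{f=gl\}}$ for some $i$; on that set $l=f/g=l_i$, and continuity of $l$ at $p_0$ gives $l(p_0)=l_i$. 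Conversely the constant function $l\equiv l_i$ satisfies $\{f=gl\}\supseteq S_i$ and $p_0\in\overline{S_i}$, giving $l_i=l(p_0)$ in the right-hand set. The main obstacle is the bookkeeping in the extension-by-zero step—checking continuity of $f,g$ at $p_0$ and along $\partial(B_0\cap X)$, and making sure the modified $s_i$ still satisfy (S1)--(S4)—but this is entirely parallel to the already-proved Property~\ref{py:mdir-global}, so no new idea is needed.
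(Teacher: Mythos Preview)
Your plan is close and your arguments for items~\ref{it:limits-Lspfg} and~\ref{it:limits-h} are fine, but the extension step has a gap and the whole route is more laborious than necessary.

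First, a simplification you missed: since the hypothesis says $s_1,\dots,s_k\in\CF(X)$ vanish on separated germs at $p_0$, Definition~\ref{de:vanish-on-sep-germs} applied with $U=X$ already hands you separating functions $\delta_1,\dots,\delta_k\in\CF(X\setminus\{p_0\})$. There is no need to pass to a smaller neighbourhood $U$ or to invoke Property~\ref{py:mdir-global}.

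Second, and more importantly, your choice of the bump function $h=\delta_{B,B_0}$ forces $g=hg^\ast$ to vanish outside $B_0$, but the \emph{given} sets $S_i$ may well meet $X\setminus B_0$. So your $g$ has zeros on $S$, and the main assertion ``$g(x)\neq 0$ on $S$ and $f/g\equiv l_i$ on $S_i$'' fails for the original $s_i$. Your proposed remedy of replacing $s_i$ by $\nu_2(\delta_B,s_i)$ changes the $S_i$ and hence the statement being proved; the parenthetical about limit values only rescues \ref{it:limits-Lspfg} and \ref{it:limits-h}, not the displayed main claim (which is what is actually quoted later in the proof of Theorem~\ref{th:def-chunk}).

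The paper's proof sidesteps all of this with one observation: you already have a function in $\CF(X)$ whose only zero is $p_0$, namely $p$ itself. Taking $g=p$ and $f=\bigl(\sum_i l_i\delta_i\bigr)\,p$ (extended by $0$ at $p_0$, which is continuous since the $\delta_i$ are bounded and $p(p_0)=0$) gives $f,g\in\CF(X)$ with $g\neq 0$ everywhere on $X\setminus\{p_0\}\supseteq S$, and $f/g=\sum_i l_i\delta_i=l_i$ on each original $S_i$. No truncation, no shrinking. Your approach becomes correct, and a one-liner, once you replace the bump function $h$ by $p$.
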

\begin{proof}
Let $\delta_1,\dots,\delta_s\in\CF(X\setminus\{p_0\})$ be separating functions for $s_1,\dots,s_k$.
Clearly $f=\sum_{i\leq k}l_i\delta_ip$ (extended by $0$ at $p_0$) and $g=p$ have
the required properties, from which items~\ref{it:limits-Lspfg} and
\ref{it:limits-h} follow immediately. For the second item, note that
$p_0\in\overline{S\cap\{f=gl\}}$ simply means that the function $l$ takes the
same values as $f/g$ on a subset of $S$ having points arbitrarily
close to $p_0$, hence $l(p_0)\in L_{s,p}(f/g)$ by continuity of $l$.
\end{proof}

\begin{definition}\label{de:defnChunk}
Let $(G,+,\leq)$ be  totally pre-ordered abelian group,
hence $\leq $ is a total pre-order satisfying
$x\leq y\Rightarrow x+z\leq y+z$.
For $\tau\in G$ with $\tau>0$ we write
$\tau\ZZ=\{\tau k\tq k\in\ZZ\}$.

We call a subset $T$ of $G$ a {\df $\tau\ZZ$\--chunk} of $G$ if $\tau\in T$ and for all $\alpha,\beta,\gamma\in T$:
\begin{enumerate}
  \item
    $-\alpha\in T$;
  \item
    $\alpha+\beta\in[-\gamma,\gamma]\Rightarrow \alpha+\beta\in T$;
  \item
    $\forall u\in[-\gamma,\gamma]$, $\exists!\,\xi\in T,\ \xi\leq u< \xi+\tau$.
\end{enumerate}
This should be seen as a finitary, and hence definable, version of integer parts as studied by \cite{mourg-ress-1993} in the case of real closed fields.
\end{definition}

\begin{remark}\label{re:fin-chunk}
 For every finite $\tau\ZZ$\--chunk of $G$ one checks easily that there is some integer $n$ with
$G=\{-n\tau,\dots,-\tau,0,\tau,2\tau,\dots,n\tau\}$.
\end{remark}

\noindent
In the ordered case (and more generally when there is a total pre-order
on $K$, definable in $\Lring\cup\{\cO\}$ and compatible with the group
structure of $(K,+)$), we can consider
chunks of $(K,+,\leq)$. In the valued case however we have to lift
 chunks from  $|K^\times|$ to $K^\times$. So we will consider chunks of
the totally pre-ordered multiplicative group $(K^\times,\times,\leq_{\cO})$
where $\leq_{\cO}$ is the inverse image of the order of $|K^\times|$, i.e.,
$x \leq_{\cO}y$ is just $|x|\leq|y|$.
In this case we adapt the terminology to multiplicative language. For example, for $\tau\in K^\times$ with
$|\tau|<1$ we write $\tau^\ZZ=\{\tau^k\tq k\in\ZZ\}$ instead of $\tau\ZZ$.
\ifoldversion\OldStart
set
\begin{displaymath}
\tau^\ZZ=\{\tau^k\tq k\in\ZZ\}
\end{displaymath}
and we say {\df $\tau^\ZZ$\--chunks} of $K^\times$
for any subset $T$ of $K^\times$ with $\tau\in T$ such that for every
$\alpha,\beta,\gamma\in T$:
\begin{enumerate}
  \item
    $1/\alpha\in T$,
  \item
    $|1/\gamma|\leq|\alpha\beta|\leq|\gamma|\Rightarrow \alpha\beta\in T$ and
  \item
    $\forall u\in K$, $|1/\gamma|\leq|u|\leq|\gamma|\Rightarrow\exists!\,\xi\in T,\ |\xi\tau|<|u|\leq|\xi|$.
\end{enumerate}
\OldEnd\else\fi

\begin{theorem}\label{th:def-chunk}
  Assume that $\cK$ satisfies \hyperlink{BFin}{(BFin)}. Let $I=\{x\in K\tq |0|<|x|<|1|\}$.
  There is a parameter-free formula $\mInt(p,h,\tau^*)$ in
  $\Lring\cup\{\cB\}$ such that for every definable set $X\subseteq K^m$, if
  $p$ has a unique zero $p_0\in X$, if $X$ has arbitrarily many germs at $p_0$
  and if the value $\tau=\tau^*(p_0)$ is in $I$, then
  \begin{displaymath}
    (\CF(X),\cB)\models\mInt(h,p,\tau^*)\iff
    h(p_0)\in\tau^\ZZ.
  \end{displaymath}
  If $K$ has a total order $\preccurlyeq$ compatible with $+$ and definable%
  \footnote{This obviously happens in the ordered case, where $K$ is an
    ordered field and $\cO$ is the interval $[-1,1]$. But
    Theorem~\ref{th:def-chunk} does not restrict to this case, as it
    does not assume any relation at all between $\cO$ and the order
    $\preccurlyeq$.}
  in $\Lring\cup\{\cO\}$, then there is similarly a formula
  $\aInt(h,p,\tau^*)$ such that, with the same assumptions on $p$, if
  $\tau=\tau^*(p_0)\succ0$ then
  \begin{displaymath}
    (\CF(X),\cB)\models\aInt(h,p,\tau^*)\iff h(p_0)\in\tau\ZZ.
  \end{displaymath}
\end{theorem}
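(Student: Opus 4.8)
The plan is to build a first-order formula that, evaluated at $(h,p,\tau^*)$ in $(\CF(X),\cB)$, captures exactly the $\tau\ZZ$-chunks of the relevant preordered group of limit values at $p_0$; by Remark~\ref{re:fin-chunk} together with \hyperlink{BFin}{(BFin)}, the only chunks available will be the finite initial segments $\{-n\tau,\dots,n\tau\}$, and taking their union over all such chunks recovers the full subgroup $\tau\ZZ$ (respectively $\tau^\ZZ$), so that $h(p_0)\in\tau\ZZ$ becomes expressible. Concretely, using Lemma~\ref{le:limits} we have the key device: since $X$ has arbitrarily many germs at $p_0$, for any finite list $l_1,\dots,l_k\in K$ we can realise $\{l_1,\dots,l_k\}$ as $L_{s,p}(f/g)$ for suitable $f,g,s\in\CF(X)$; and by Lemma~\ref{le:lim-form} the relation ``$h(p_0)\in L_{s,p}(f/g)$'' is definable by $\limit(f,g,h,s,p)$. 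Thus finite subsets of $K$ that arise as such limit-value sets can be quantified over inside $\CF(X)$ via triples $(f,g,s)$, and membership of a point (coded by a function value at $p_0$) in such a set is definable. This is the mechanism that turns the external combinatorial object ``finite $\tau\ZZ$-chunk'' into an internal definable one.

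First I would fix, inside the ring, a way to speak of ``the value of a function at $p_0$'': using $\point(p)$ and the relation $\sqsubseteq$ together with the formulas $[\varphi]$ of Proposition~\ref{pr:local-formulas}, statements of the form ``$K\models\varphi(h_1(p_0),\dots,h_j(p_0))$'' are uniformly definable in $(\CF(X),\cB)$. In particular the preorder $|h_1(p_0)|\le|h_2(p_0)|$ (via the local version of $|y_1|\le|y_2|$, i.e.\ $\exists w\,(w y_1 = y_2 \wedge w\in\cO)$ read through $[\cdot]$ and $\cB$), multiplication of values, and the interval conditions $|1/\gamma|\le|u|\le|\gamma|$ are all available; similarly in the ordered-$\preccurlyeq$ case the additive preorder and intervals are available through $\Lring\cup\{\cO\}$. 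Next I would write a formula $\chunk(F)$ asserting that $F$ — presented as a triple $(f,g,s)$ with $p$, so that the coded set is $Z_F := L_{s,p}(f/g)$ — is a $\tau^\ZZ$-chunk of $(K^\times,\times,\le_\cO)$ in the sense of Definition~\ref{de:defnChunk}: this is a direct transcription of conditions (1)--(3) of that definition, where each universally/existentially quantified element of $T$ is quantified as ``$l(p_0)$ for some $l\in\CF(X)$ with $p_0\in\overline{S\cap\{f=gl\}}$'' (using Lemma~\ref{le:limits}\ref{it:limits-h} to see this ranges over exactly $Z_F$), and each $u$ ranges over all values $v(p_0)$, $v\in\CF(X)$. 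The condition ``$\tau\in T$'' is ``$\tau^*(p_0)\in Z_F$'', definable by $\limit$. Finally I would set
\[
  \mInt(h,p,\tau^*)\ \equiv\ \exists (f,g,s)\ \big(\chunk_{f,g,s,p,\tau^*}\ \wedge\ \limit(f,g,h,s,p)\big),
\]
i.e.\ $h(p_0)$ lies in some $\tau^\ZZ$-chunk. The verification then has two halves: if $h(p_0)\in\tau^\ZZ$, say $h(p_0)=\tau^n$, then by Lemma~\ref{le:limits} (applied with the list $\{\tau^{-n},\dots,\tau^n\}$, using arbitrarily many germs) there is a triple realising $Z_F=\{\tau^{-n},\dots,\tau^n\}$, which one checks is a $\tau^\ZZ$-chunk and contains $h(p_0)$; conversely, if $h(p_0)\in Z_F$ for some $Z_F$ satisfying $\chunk$, then $Z_F$ is a genuine $\tau^\ZZ$-chunk of $(K^\times,\times,\le_\cO)$, it is finite because it is a closed \emph{discrete} definable subset of $K$ and \hyperlink{BFin}{(BFin)} applies — discreteness coming from the chunk axiom (3), which forces the elements of $Z_F$ to be $\|\cdot\|$-separated in steps of $|\tau|$ — hence by Remark~\ref{re:fin-chunk} $Z_F=\{\tau^{-n},\dots,\tau^n\}\subseteq\tau^\ZZ$, so $h(p_0)\in\tau^\ZZ$. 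The ordered case with $\preccurlyeq$ is identical, replacing $(K^\times,\times,\le_\cO)$ by $(K,+,\preccurlyeq)$ and $\tau^\ZZ$ by $\tau\ZZ$ throughout, and noting that a $\tau\ZZ$-chunk of $(K,+,\preccurlyeq)$ is closed, bounded (it sits in $[-\gamma,\gamma]$ for any of its elements $\gamma\ge 0$ via axiom (2)) and discrete, so again finite by \hyperlink{BFin}{(BFin)}.

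The main obstacle I expect is the careful bookkeeping needed to ensure that everything quantified over \emph{inside} the ring corresponds exactly to the intended \emph{external} objects. In particular: (i) one must check that as $(f,g,s)$ ranges over all valid triples, the sets $L_{s,p}(f/g)$ range over \emph{all} finite subsets of $K$ containing the required points (this is exactly what ``arbitrarily many germs'' buys us via Lemma~\ref{le:limits}, but the quantifier ``$\exists(f,g,s)$'' must also include the side conditions from $\chi(g,s,p)$ in Lemma~\ref{le:lim-form} so that $L_{s,p}$ is well-defined); (ii) one must verify that the ``value at $p_0$'' quantification is sound, i.e.\ that for every $c\in K$ there is a constant function in $\CF(X)$ with that value and that the local formulas $[\varphi]$ really do compute $\varphi$ at $p_0$ when $p$ is a point with $p(p_0)=0$ — this is Proposition~\ref{pr:local-formulas}, applied with $s=p$; and (iii) in translating Definition~\ref{de:defnChunk}(3), the clause involves \emph{all} $u$ in an interval and a \emph{unique} $\xi$ in $T$, so the formula must quantify $u$ over arbitrary function-values while quantifying $\xi$ over the coded set $Z_F$ — getting the two ranges right (general $\CF(X)$ versus ``limit value of $f/g$'') is where an error would most naturally creep in. Once these correspondences are pinned down, the two implications above are routine, and the ``as a consequence'' clauses of the theorem follow by composing the definition of $\tau^\ZZ$ with the interpretation of its ring structure coming from the chunk (addition of exponents corresponds to the partial group operation internal to large enough chunks).
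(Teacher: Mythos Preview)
Your approach is essentially the paper's: code finite subsets of $K$ as limit-value sets $L_{s,p}(f/g)$, transcribe the chunk axioms of Definition~\ref{de:defnChunk} into a formula in $\Lring\cup\{\cB\}$ via $\limit$ and Proposition~\ref{pr:local-formulas}, and let $\mInt(h,p,\tau^*)$ assert that $h(p_0)$ lies in some such chunk. The two directions you sketch are the ones the paper proves.

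There is, however, a genuine gap concerning \emph{boundedness}. You invoke \hyperlink{BFin}{(BFin)} to conclude that any $Z_F=L_{s,p}(f/g)$ satisfying the chunk axioms is finite, but \hyperlink{BFin}{(BFin)} requires the set to be closed, \emph{bounded} and discrete. In the multiplicative case you only argue ``closed and discrete''; in the additive case you claim boundedness follows from axiom~(2), but axiom~(2) is an inward closure condition ($\alpha+\beta\in[-\gamma,\gamma]\Rightarrow\alpha+\beta\in T$), not a containment $T\subseteq[-\gamma,\gamma]$. Chunks are \emph{not} bounded in general: $\tau\ZZ$ itself is always a $\tau\ZZ$-chunk, and worse, in non-archimedean groups there are unbounded chunks not contained in $\tau\ZZ$ (for instance in $\ZZ\times\ZZ$ with the lexicographic order and $\tau=(0,1)$, the whole group satisfies (1)--(3)). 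If such a set arose as some $L_{s,p}(f/g)$, your formula $\mInt$ would hold for $h$ with $h(p_0)\notin\tau^\ZZ$, and Remark~\ref{re:fin-chunk} would not apply.

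The paper closes this gap by building boundedness into the formula: after writing $\mlimch$ for the chunk axioms, it sets $\mlimBch(f,g,s,p,\tau^*)$ to be $\mlimch\land\exists\delta\,\forall\alpha\,(\varphi^\sigma(\alpha)\to|\alpha|\leq_p|\delta|)$, and then $\mInt$ quantifies over \emph{bounded} chunks only. With this one extra conjunct your argument goes through verbatim: $Z_F$ is then closed, bounded and discrete, hence finite by \hyperlink{BFin}{(BFin)}, and Remark~\ref{re:fin-chunk} yields $Z_F\subseteq\tau^\ZZ$.
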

\noindent
In other words, Theorem~\ref{th:def-chunk} says essentially that,
given any $\tau\in
K^\times$ with $|\tau|<1$ and any definable set $X\subseteq K^m$ with
arbitrarily many germs at some point $p_0$, then the set of all $h\in\CF(X)$ taking
values in $\tau^\ZZ$ at $p_0$, is definable in $(\CF(X),\cB)$;
the definition is independent of $X$, $K$ and $m$ and
is in the two definable parameters $p_0,\tau$.

Also notice that by Remark~\ref{re:def-O3}, the set $\cB$ is definable in $\Lring$ in many cases, hence Theorem~\ref{th:def-chunk} applies to
the pure ring $\CF(X)$ in all these cases.

\begin{proof}
  In order to ease the notation, in this proof we write
  $\CF(X)\models\chi(\dots)$ instead of $(\CF(X),\cB)\models\chi(\dots)$, for every formula
  in $\Lring\cup\{\cB\}$.

  We first construct a formula which axiomatizes
  (uniformly) the property that $\tau=\tau^*(p_0)\in\cO\setminus\{0\}$ and $L_{s,p}(f/g)$ is a
  bounded $\tau^\ZZ$\--chunk. We use the formula $\limit(f,g,h,s,p)$
  stated in Lemma~\ref{le:lim-form}. It will be convenient to
  abbreviate it as $\varphi^\sigma(h)$ where $\sigma=(f,g,s,p)$. Thus
  for every $f,g,v,s,p\in\CF(X)$ we have
\[
    v(p_0)\in L_{s,p}(f/g)\iff\CF(X)\models\varphi^\sigma(v).
\]
  Let $\psi^\sigma(\alpha,\alpha',\beta,\gamma,p,\tau^*)$ be the conjunction of the following formulas.
  \begin{enumerate}[1.]
    \item
      $p\sqsubseteq \alpha\alpha'-1 \to \varphi^\sigma(\alpha')$.
    \item
      $|1|\leq_p|\alpha\beta\gamma|\leq_p|\gamma^2|$.
    \item
      $\displaystyle \forall u\,\big(|1|\leq_p|u\gamma|\leq_p|\gamma^2|\to
      \exists\xi\,\big[\varphi^\sigma(\xi)\land |\xi\tau^*|<_p|u|\leq_p|\xi|\big]\big)$.
    \item
      $\displaystyle \forall u,\xi,\xi'\,
        \big(\big[ A(u,\xi,p,\tau^*)\land A(u,\xi',p,\tau^*) \big]
        \to p\sqsubseteq \xi-\xi'\big)$,\\
        where $A(u,\xi,p,\tau^*)$ stands for $\varphi^\sigma(\xi)\land |\xi\tau^*|<_p|u|\leq_p|\xi|$.
  \end{enumerate}
  Clearly $\tau=\tau^*(p_0)\in I$ and $L_{s,p}(f/g)$ is a $\tau^\ZZ$\--chunk if and
  only if $\CF(X)$ satisfies the conjunction, which we will denote
  $\mlimch(f,g,s,p,\tau^*)$, of $|0|<_p|\tau^*|<_p|1|$, of $\varphi^\sigma(\tau^*)$ and of
  \begin{displaymath}
    \forall \alpha,\alpha',\beta,\gamma\,
    \big[\varphi^\sigma(\alpha)\land \varphi^\sigma(\beta)\land \varphi^\sigma(\gamma) \to \psi^\sigma(\alpha,\alpha',\beta,\gamma,p,\tau^*)\big].
  \end{displaymath}
  The formula $\mlimBch(f,g,s,p,\tau^*)$ is then defined as
  \begin{displaymath}
    \mlimch(f,g,s,p,\tau^*)\land  \exists\delta\forall\alpha\big(\varphi^\sigma(\alpha)\to|\alpha|\leq_p|\delta|\big).
  \end{displaymath}
  Clearly it holds true if and only if $\tau=\tau^*(p_0)\in I$ and
  $L_{s,p}(f/g)$ is a bounded $\tau^\ZZ$\--chunk. Finally, let
  $\mInt(h,p,\tau^*)$ be the formula
  \begin{displaymath}
    \exists f,g,s,\,\limit(f,g,h,s,p)\land
    \mlimBch(f,g,s,p,\tau^*).
  \end{displaymath}
  If $\CF(X)\models\mInt(h,p,\tau^*)$ then $p$ has a single zero
  $p_0$, $s$ vanishes on a germ at $p_0$ and $h(p_0)\in
  L_{s,p}(f/g)$; further, by the above, $\tau=\tau^*(p_0)\in I$,
  and $L_{s,p}(f/g)$ is a bounded $\tau^\ZZ$\--chunk (in particular it is a bounded discrete subset of $K$). Since $L_{s,p}(f/g)$ is
is a closed definable subset of $K$ it is then finite by \hyperlink{BFin}{(BFin)}.
  Thus by Remark~\ref{re:fin-chunk} (in multiplicative notation) there is a
  non-negative integer $n$ such that $L_{s,p}(f/g)=\{\tau^i\tq -n\leq i\leq n\}$. We obtain
  $L_{s,p}(f/g)\subseteq\tau^\ZZ$ and so $h(p_0)\in\tau^\ZZ$.

  Conversely, assume that $p$ has a single zero $p_0$ and for this zero we have
  $\tau=\tau^*(p_0)\in I$ and  $h(p_0)=\tau^k$ for some
  $k\in\ZZ$. Let $T_h=\{\tau^i\tq -|k|\leq i\leq|k|\}$. Since $X$ has arbitrarily
  many germs
  at $p_0$, we may invoke Property~\ref{py:mdir-global} to obtain functions
  $s_i\in\CF(X)$ that vanish at separated germs at $p_0$, for $-|k|\leq i\leq|k|$\footnote{Here $|k|$
  denotes the ordinary absolute value in $\ZZ$, not the image of $k$ in
  $|K|$.}. Let $s$ be the product of the $s_i$'s and
  $S=\{s=0\}$. By Lemma~\ref{le:limits} there are $f,g\in\CF(X)$ such that
  $g(x)\neq0$ on $S$ and
\[
T_h = L_{s,p}(f/g) = \big\{l(p_0)\tq l\in\CF(X)\mbox{ and }p_0\in\overline{S\cap\{f=gl\}}\big\}.
\]
  Thus $\CF(X)\models\limit(f,g,h,s,p)\land \mlimBch(f,g,s,p,\tau^*)$, hence
  $\CF(X)\models\mInt(h,p,\tau^*)$.
  This finishes the proof of the first equivalence of the theorem.

  \smallskip
  If $K$ has a total order $\preccurlyeq$ that is definable in
  $\Lring\cup\{\cO\}$ and compatible with $+$, then by Proposition~\ref{pr:local-formulas} the following
  relations are definable in $\Lring\cup\{\cO\}$.
  \begin{displaymath}
    g\preccurlyeq_s f \iff \{s=0\}\subseteq\{g\preccurlyeq f\}
  \end{displaymath}
  \begin{displaymath}
    g\prec_s f \iff \{s=0\}\subseteq\{g\prec f\}
  \end{displaymath}
  Let
  $\Phi^\sigma(\alpha,\beta,\gamma,p,\tau^*)$ be the conjunction of the following formulas.
  \begin{enumerate}[1.]
    \item
      $\varphi^\sigma(-\alpha')$.
    \item
      $-\gamma\preccurlyeq_p \alpha+\beta\preccurlyeq_p \gamma \to \varphi^\sigma(\alpha+\beta)$.
    \item
      $\displaystyle \forall u\,
        \big[-\gamma\preccurlyeq_p u\preccurlyeq_p \gamma \to
        \exists\xi\,\big(\varphi^\sigma(\xi)\land \xi\preccurlyeq_p u \prec_p \xi+\tau^*\big)\big]$.
    \item
      $\displaystyle \forall u,\xi,\xi'\,
        \big(B(u,\xi,p,\tau^*)\land B(u,\xi',p,\tau^*)\big]
        \to p\sqsubseteq \xi-\xi'\big)$,\\
        where $B(u,\xi,p,\tau^*)$ stands for $\varphi^\sigma(\xi)\land \xi \preccurlyeq_p u \prec_p \xi+\tau^*$.
  \end{enumerate}
  We then let $\alimch(f,g,s,p,\tau^*)$ be the conjunction
  of $0\prec_p \tau^*$
  with $\varphi^\sigma(\tau^*)$ and
  \begin{displaymath}
    \forall \alpha,\beta,\gamma\,
    \big[\varphi^\sigma(\alpha)\land \varphi^\sigma(\beta)\land \varphi^\sigma(\gamma) \to \Phi^\sigma(\alpha,\beta,\gamma,p,\tau^*)\big],
  \end{displaymath}
  and let $\alimBch(f,g,s,p,\tau^*)$ be
  \begin{displaymath}
    \alimch(f,g,s,p,\tau^*)\land  \exists\delta\forall\alpha\big(\varphi^\sigma(\alpha)\to\alpha \preccurlyeq_p \delta\big).
  \end{displaymath}
  Finally we take $\aInt(h,p,\tau^*)$ as $\exists f,g,s \bigl(\limit(f,g,h,s,p)\land
  \mlimBch(f,g,s,p,\tau^*)\bigr)$.
  The proof that it satisfies the second equivalence in the Theorem
  is analogous  to the multiplicative case and left to the reader.
\end{proof}

\begin{corollary}\label{co:def-Z-p0}
  Assume that $\cK$ satisfies
  \hyperlink{BFin}{(BFin)}
  and take $\tau\in K^\times$ with
  $|\tau|<|1|$. Let $X\subseteq K^m$ be a definable set having arbitrarily many
  germs at a given point $p_0$. Let $p,\tau^*\in\CF(X)$ be such that $p_0$ is
  the unique zero of $p$ and $\tau^*(p_0)=\tau$. Then the ring of integers
  $(\ZZ,+,\times)$ is interpretable in $(\CF(X),\cB)$ with parameters
  $(p,\tau^*)$.
\end{corollary}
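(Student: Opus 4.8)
The plan is to interpret $\ZZ$ by attaching to a function $h\in\CF(X)$ only its value $h(p_0)$, which will be forced to lie in $\tau^\ZZ$, and then to reconstruct $(\ZZ,+,\times)$ from the arithmetic of the exponents.

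First I would fix the domain and equality. Since $\tau\in K^\times$ and $|\tau|<|1|$ we have $\tau\in I$, so Theorem~\ref{th:def-chunk} shows that the set $D:=\{h\in\CF(X)\tq h(p_0)\in\tau^\ZZ\}$ is definable in $(\CF(X),\cB)$ by $\mInt(h,p,\tau^*)$. As $p$ has $p_0$ as its only zero and $\sqsubseteq$ is $0$-definable in the ring $\CF(X)$ (Theorem~\ref{th:PosetZeroIsJac}), the relation $h_1\sim h_2:\iff p\sqsubseteq h_1-h_2$ is a definable equivalence relation on $D$ that holds exactly when $h_1(p_0)=h_2(p_0)$. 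So $h\mapsto h(p_0)$ induces a bijection $D/{\sim}\to\tau^\ZZ$, which composed with $\tau^k\mapsto k$ gives a bijection $D/{\sim}\to\ZZ$. I would also single out the sets $D_0:=\{h\in D\tq |h|\leq_p|1|\}$ and $D^+:=\{h\in D\tq |h|<_p|1|\}$, both definable by Proposition~\ref{pr:local-formulas}, corresponding under this bijection to $\NN$ and $\NN\setminus\{0\}$ (here we use $|\tau|<|1|$, so larger exponent means smaller absolute value).

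Next, several operations come essentially for free on $D_0/{\sim}\cong\NN$, all definable in $(\CF(X),\cB)$: the constant $1$ represents $0\in\NN$; $\tau^*$ represents $1\in\NN$; pointwise multiplication of functions represents addition, since $(h_1h_2)(p_0)=\tau^{k_1+k_2}$ and $D_0$ is closed under it, so the graph of addition is given by $p\sqsubseteq h_1h_2-h_3$; and $k_1\leq k_2$ is expressed by $|h_2|\leq_p|h_1|$.

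The hard part is multiplication, which is not induced by any pointwise operation on $\CF(X)$. The idea I would use is that the formula $\mInt(h,p,\tau^*)$ from Theorem~\ref{th:def-chunk} may be applied with \emph{any} function in place of its third argument: for $h_1\in D^+$ the value $h_1(p_0)=\tau^{k_1}$ lies in $I$, so $(\CF(X),\cB)\models\mInt(h_2,p,h_1)$ holds precisely when $h_2(p_0)\in(h_1(p_0))^\ZZ=\tau^{k_1\ZZ}$, that is, for $h_1,h_2\in D^+$, precisely when $k_1$ divides $k_2$. Thus $(D^+/{\sim},+,\leq,\mid)$ is definable in $(\CF(X),\cB)$ and isomorphic to $(\NN\setminus\{0\},+,\leq,\mid)$, in which multiplication is first-order definable by a classical argument: $\leq$ and $\mid$ define least common multiples, and since $k$ and $k+1$ are coprime (the successor being multiplication of the representative by $\tau^*$), one has $k(k+1)=\operatorname{lcm}(k,k+1)$, hence $k^2$ and then $k_1k_2=\tfrac{1}{2}\big((k_1+k_2)^2-k_1^2-k_2^2\big)$ are definable. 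Extending by $0\cdot n=0$ to $D_0/{\sim}$, this makes $(\NN,+,\times)$ interpretable in $(\CF(X),\cB)$ with parameters $p,\tau^*$, and hence $(\ZZ,+,\times)$ as well via the usual presentation of $\ZZ$ as differences of natural numbers. The genuinely non-routine step here is this last derivation of multiplication; once one notices that varying the base in $\mInt$ produces the divisibility relation on exponents, the remainder is bookkeeping about values at $p_0$.
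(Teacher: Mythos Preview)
Your proof is correct and follows essentially the same route as the paper: both define the domain via $\mInt(h,p,\tau^*)$, the equivalence via $p\sqsubseteq h_1-h_2$, addition via pointwise multiplication, the order via $|\cdot|\leq_p|\cdot|$, and---crucially---divisibility by feeding a variable base into the third slot of $\mInt$. The only differences are cosmetic: the paper works directly on $\ZZ$ and simply cites a reference for the definability of multiplication in $(\ZZ,+,\mid,\leq)$, whereas you pass through $\NN$ first and spell out the $\operatorname{lcm}$/squaring argument; you are also slightly more careful than the paper in restricting the divisibility clause to $D^+$ so that the base value actually lies in $I$ as Theorem~\ref{th:def-chunk} requires.
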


\begin{proof}
  For every $k,l\in\ZZ$ let $l|k$ denote the divisibility relation. It is
  well known (see for example \cite{denis-1989})
  that multiplication is 0-definable in
  $(\ZZ,+,|,\leq)$, hence it suffices to interpret the latter structure in
  $(\CF(X),\cB)$. Let $\cZ=\{f\in\CF(X)\tq f(p_0)\in\tau^\ZZ\}$, and for
  every $f\in\cZ$ let $\sigma(f)$ be the unique $k\in\ZZ$ such that
  $f(p_0)=\tau^k$.
  Then
  \begin{displaymath}
    f(p_0)\in\tau^\ZZ\iff (\CF(X),\cB)\models\mInt(f,p,\tau^*),
  \end{displaymath}
  by Theorem~\ref{th:def-chunk} and therefore
$\cZ$ is definable in $(\CF(X),\cB)$.
The equivalence relation $\sigma(f)=\sigma(g)$ on $\cZ$ is also definable, because
  \begin{displaymath}
    \sigma(f)=\sigma(g)\iff f(p_0)=g(p_0) \iff (\CF(X),\cB)\models p\sqsubseteq f-g.
  \end{displaymath}
  For every
  $f,g\in\cZ$ we have obviously $\sigma(f)+\sigma(g)=\sigma(fg)$ and
  \begin{displaymath}
    \sigma(g)\geq \sigma(f) \iff |g(p_0)|\leq|f(p_0)|\iff
    (\CF(X),\cB)\models|g|\leq_p|f|.
  \end{displaymath}
  This gives an interpretation of $(\ZZ,+,\leq)$ in $(\CF(X),\cB)$.
  Moreover $\sigma(g)$ divides $\sigma(f)$ if and only if
  $f(p_0)\in(\tau^{\sigma(g)})^\ZZ=(g(p_0))^\ZZ$.
  We obtain
  \begin{displaymath}
    \sigma(f)|\sigma(g)\iff (\CF(X),\cB)\models\mInt(f,p,g)
  \end{displaymath}
  in the terminology of Theorem~\ref{th:def-chunk}.
  Consequently $(\ZZ,+,|,\leq)$ is interpretable in $(\CF(X),\cB)$, from which
  the result follows.
\end{proof}

\smallskip\noindent
We say that $|K^\times|$ is {\df $\tau$\--archimedean} for some $\tau\in K$ if
$\tau^\ZZ$ is coinitial in $|K^\times|$, that is for every $x\in K^\times$ there is an
integer $k$ such that $|\tau^k|\leq|x|$. In the ordered case, $\cK$ is
$2$\--archimedean if and only if it is archimedean in the sense
that $\ZZ$ is cofinal in $K$.

\begin{corollary}\label{co:plong-non-elem}
  Let $\cK'=(K',\dots)$ be any elementary extension of $\cK$ and assume
  that $\cK$ and $\cK'$ satisfy \hyperlink{BFin}{(BFin)}. Let $X\subseteq K^m$ be a definable set
  having arbitrarily many germs at a given point $p_0$. Let $X'\subseteq K'^m$ be
  defined by a formula defining $X$. If $|K^\times|$ is $\tau$\--archimedean
  for some $\tau\in K$ and $|K'^\times|$ is not $\tau$\--archimedean, then the
  natural $\Lring\cup\{\cB\}$\--embedding of $\CF(X)$ into $\CF(X')$ is \emph{not}
  an elementary embedding.
\end{corollary}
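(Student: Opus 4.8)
The plan is to exhibit a single $\Lring\cup\{\cB\}$\--formula, with parameters lying in the image of the natural embedding $\CF(X)\hookrightarrow\CF(X')$, that is true in $(\CF(X),\cB)$ but false in $(\CF(X'),\cB')$. Put $p=\delta_{p_0}\in\CF(X)$, the function of Lemma~\ref{le:dirac} \ref{it:dirac-pt}, whose only zero is $p_0$, and let $\tau^*\in\CF(X)$ be the constant function with value $\tau$. Both are defined by fixed $\Lring\cup\{\cO\}$\--formulas with parameters $p_0,\tau\in K$; let $p',\tau^{*\prime}\in\CF(X')$ be defined by the same formulas. Since $p_0\in X\subseteq X'$ and $\tau\in K\subseteq K'$, the pair $(p',\tau^{*\prime})$ is the image of $(p,\tau^*)$ under the natural embedding, $p'$ has $p_0$ as its unique zero in $X'$, and $\tau^{*\prime}$ is the constant function $\tau$.

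Before anything else I would record two transfers along the given elementary extension of $\cK$ by $\cK'$. First, $\tau$ remains a non-zero and non-invertible element of $\cO$ in $\cK'$, equivalently $\tau$ lies in the set $I=\{x\tq|0|<|x|<|1|\}$ read in $\cK'$, since this is a first-order property of $\tau$. Second, $X'$ has arbitrarily many germs at $p_0$: fixing $k\in\NN$, the assertion that $X$ has at least $k$ germs at $p_0$ amounts to the existence of parameters $\bar a$ over $K$ for which some fixed tuple of formulas defines a neighborhood $U$ of $p_0$ in $X$ and functions $s_1,\dots,s_k\in\CF(U)$, $\delta_1,\dots,\delta_k\in\CF(U\setminus\{p_0\})$ satisfying conditions (S1)--(S4) of Definition~\ref{de:vanish-on-sep-germs}; these are all first-order conditions on $\cK$, using that the balls are uniformly definable. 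Each such assertion therefore passes from $\cK$ to $\cK'$, and it does so for every $k$.

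Now consider the formula
\begin{displaymath}
  \theta(p,\tau^*)\ \equiv\ \forall g\,\bigl(p\not\sqsubseteq g\ \longrightarrow\ \exists f\,\bigl(\mInt(f,p,\tau^*)\ \land\ |f|\leq_p|g|\bigr)\bigr)
\end{displaymath}
in $\Lring\cup\{\cB\}$, where $\mInt$ is the formula of Theorem~\ref{th:def-chunk} (in the notation of Corollary~\ref{co:def-Z-p0}) and $|f|\leq_p|g|$ is the relation $\{p=0\}\subseteq\{|f|\leq|g|\}$, definable by Proposition~\ref{pr:local-formulas}. I claim that whenever $Y$ is a definable set, in $\cK$ or in $\cK'$, having arbitrarily many germs at $p_0$, with $q\in\CF(Y)$ of unique zero $p_0$ and $\tau^*$ the constant $\tau$, then $(\CF(Y),\cB)\models\theta(q,\tau^*)$ if and only if $|K^\times|$ is $\tau$\--archimedean, where $K$ is the base field. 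Indeed $q\not\sqsubseteq g$ expresses $g(p_0)\neq0$, $|f|\leq_p|g|$ expresses $|f(p_0)|\leq|g(p_0)|$, and by Theorem~\ref{th:def-chunk} the relation $\mInt(f,q,\tau^*)$ holds exactly when $f(p_0)\in\tau^\ZZ$; since the constant functions lie in $\CF(Y)$, the values $g(p_0)$ and $f(p_0)$ realize all of $K$, so $\theta(q,\tau^*)$ says precisely that for every $y\in K^\times$ there is $k\in\ZZ$ with $|\tau^k|\leq|y|$, which is the definition of $\tau$\--archimedeanity.

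Finally I would apply the claim twice: with $Y=X$, $q=p$ over $\cK$ it gives $(\CF(X),\cB)\models\theta(p,\tau^*)$, because $|K^\times|$ is $\tau$\--archimedean; and with $Y=X'$, $q=p'$ over $\cK'$, legitimate by the transfers above, it gives $(\CF(X'),\cB')\not\models\theta(p',\tau^{*\prime})$, because $|K'^\times|$ is not $\tau$\--archimedean. As the natural embedding is an $\Lring\cup\{\cB\}$\--embedding sending $(p,\tau^*)$ to $(p',\tau^{*\prime})$, it is not elementary. The one step needing care is the transfer of ``arbitrarily many germs'' to $X'$; the remainder is routine bookkeeping with Theorem~\ref{th:def-chunk} and Proposition~\ref{pr:local-formulas}.
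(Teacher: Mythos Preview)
Your proof is correct and follows essentially the same approach as the paper's: exhibit a sentence built from $\mInt$ that expresses $\tau$\--archimedeanity of the value group at $p_0$, and observe it holds in $(\CF(X),\cB)$ but fails in $(\CF(X'),\cB')$. Your formula $\theta$ is the coinitial dual of the paper's cofinal version $\forall f\,\exists g\,\bigl(|f|\leq_p|g|\land\mInt(g,p,\tau^*)\bigr)$, and you are more explicit than the paper in transferring the ``arbitrarily many germs'' hypothesis to $X'$ (the paper tacitly uses only the direction of Theorem~\ref{th:def-chunk} that does not require it).
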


\begin{proof}
  Let $p\in\CF(X)$ be such that $\{p=0\}=\{p_0\}$, and $\tau^*$ be the constant
  function on $X$ with value $\tau$. The assumption that $|K^\times|$ is
  $\tau$\--archimedean implies that $|0|<|\tau|<|1|$. For every $f\in\CF(X)$,
  there exists $k\in\ZZ$ such that $|f(p_0)|\leq|\tau^k|$, so there exists
  $g\in\CF(X)$ such that $|f(p_0)|\leq|g(p_0)|$ and $g(p_0)\in\tau^\ZZ$ (it
  suffices to take $g=(\tau^*)^k$). Thus by Theorem~\ref{th:def-chunk}
\[
    (\CF(X),\cB)\models\forall f\, \exists g,\, |f|\leq_p|g|\land\mInt(g,p,\tau^*).\leqno{(*)}
\]
  On the other hand, since $K'$ is not $\tau$\--archimedean, there exists
  an element $a'\in K'$ such that $|a'|\nleq|\tau^k|$ for every $k\in\ZZ$. Let $f'$
  be the constant function on $X$ with value $a'$. For every
  $g'\in\CF(X')$ such that $g'(p_0)\in\tau^\ZZ$, $|f'(p_0)|\nleq|g'(p_0)|$, hence
  the formula in ($*$) is not satisfied in
  $(\CF(X'),\sqsubseteq,\cB)$.
\end{proof}

\begin{remark}\label{re:CofXNotInLattice}
Let $R\subseteq S$ be real closed fields.
Let $L(R^n)$ be the lattice of closed and semi-algebraic subsets of
$R^n$. In \cite{astier-2013} it is shown that the natural embedding $L(R^n)\to L(S^n)$
is an elementary map.
However,
if $n\geq 2$ and $R=\RR\subsetneq S$, then we know from \ref{co:plong-non-elem}
that the natural embedding $C(R^n)\to C(S^n)$ is not elementary.
Consequently for real closed fields $R$, there is no
interpretation of the ring $C(R^n)$ in the lattice $L(R^n)$ that is independent of $R$.
\end{remark}

\section{Defining integers using local dimension}
\label{se:loc-dim}

In this section, assuming \hyperlink{Dim}{(Dim)}, we show that the set of
functions in $\CF(X)$ that take values in $\tau^\ZZ$ or $\tau\ZZ$ for some
$\tau\in K^\times$ at some point $p_0$ as in Theorem~\ref{th:def-chunk}, is definable in $(\CF(X),\cB)$,
provided $p_0$ is not of small local dimension. In order to
do so we first prove that the points at which $X$ has arbitrarily many
germs are dense among those at which $X$ has local
dimension $\geq2$.

Recall from Definition~\ref{de:defnDim} that $\Delta_k(X)$ denotes the set of $x\in X$ of local dimension $k$
and $W_k(X)$ denotes the set of $x\in X$ such that there is an
open ball $B$ centered at $x$ and a coordinate projection $\pi:K^m\to K^k$
which induces by restriction a homeomorphism between $B\cap X$ and an
open subset of $K^k$.

\begin{lemma}\label{le:Sk-dense}
  Assume that $\cK$ satisfies \hyperlink{Dim}{(Dim)}. For every definable set $X\subseteq K^m$
  and every integer $k\geq0$, $W_k(X)$ is a dense subset of $\Delta_k(X)$. If
  non-empty, both of them have dimension $k$.
\end{lemma}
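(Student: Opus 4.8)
The plan is to prove the two assertions in order: first that $W_k(X)$ is dense in $\Delta_k(X)$, then that each of them, when non-empty, has dimension exactly $k$. Throughout I work inside $\Delta_k(X)$, which by Property~\ref{py:Sk-Dk} contains $W_k(X)$.

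For the density statement, I would argue by contradiction: suppose there is a point $x_0\in\Delta_k(X)$ and an open ball $B$ centered at $x_0$ such that $(B\cap X)\cap W_k(X)=\emptyset$. Set $Y=B\cap X$; shrinking $B$ if necessary we may assume $\dim Y=\dim(X,x_0)=k$ (this uses the very definition of local dimension). Now $W_k(Y)=Y\cap W_k(X)=\emptyset$ — here the key point is that $W_k(\cdot)$ is a local notion, so $W_k(Y)$ and $W_k(X)$ agree on the open set $Y$ of $X$. But then $Y=Y\setminus W_k(Y)$, and by (Dim\ref{it:dim-Sk}) applied to $Y$ we get $\dim Y=\dim(Y\setminus W_k(Y))<k$, contradicting $\dim Y=k$. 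Hence every ball around $x_0$ meets $W_k(X)$, i.e. $W_k(X)$ is dense in $\Delta_k(X)$.

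For the dimension claim, assume $\Delta_k(X)\neq\emptyset$, so by the density just proved $W_k(X)\neq\emptyset$ as well. Pick $x\in W_k(X)$: by definition there is a ball $B$ and a coordinate projection $\pi\colon K^m\to K^k$ restricting to a homeomorphism from $B\cap X$ onto an open subset of $K^k$, so $\pi(B\cap X)$ has non-empty interior in $K^k$ and hence $\dim W_k(X)\geq\dim(B\cap X)=k$. For the reverse inequality, $W_k(X)\subseteq\Delta_k(X)\subseteq X$ gives $\dim W_k(X)\leq\dim\Delta_k(X)\leq\dim X$ by monotonicity of dimension under inclusion (a consequence of (Dim\ref{it:dim-image})). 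It remains to see $\dim\Delta_k(X)\leq k$; this follows because for every $x\in\Delta_k(X)$ some ball $B$ has $\dim(B\cap X)=k$, and stratifying $X$ by the sets $\Delta_l(X)$ together with (Dim\ref{it:dim-image}) and (Dim\ref{it:dim-frontier}) forces the portion of $X$ of local dimension exactly $k$ to have global dimension at most $k$ — concretely, $\Delta_k(X)\subseteq \bigcup_{l\le k}\Delta_l(X)$, a set of dimension $\le k$ since it is covered near each of its points by a $k$-dimensional slice. Combining, $\dim W_k(X)=\dim\Delta_k(X)=k$.

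The main obstacle is the bookkeeping in the last step: one must be careful that ``local dimension $\le k$ everywhere'' really does bound the global dimension by $k$, rather than merely bounding it pointwise. I expect this to follow cleanly from (Dim\ref{it:dim-image}) and (Dim\ref{it:dim-frontier}) — e.g. if $\dim\Delta_k(X)>k$ then some coordinate projection sends $\Delta_k(X)$ onto a set with interior in $K^{k+1}$, which by the homeomorphism-on-a-ball characterization of dimension would produce a point of local dimension $>k$ inside $\Delta_k(X)$, a contradiction. The density argument and the lower bound $\dim W_k(X)\ge k$ are routine; the upper bound is where care is needed.
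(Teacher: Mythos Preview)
Your density argument is correct and essentially identical to the paper's: shrink a ball so that $\dim(B\cap X)=k$, observe that $W_k(B\cap X)\subseteq W_k(X)$ because $B\cap X$ is open in $X$, and invoke (Dim\ref{it:dim-Sk}) to see $W_k(B\cap X)\neq\emptyset$.

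The dimension claim, however, has a genuine gap in the upper bound. You correctly isolate the issue --- that ``local dimension $\leq k$ everywhere'' must force global dimension $\leq k$ --- but you attribute this to (Dim\ref{it:dim-image}) and (Dim\ref{it:dim-frontier}), which do not suffice. Your final sketch (``some coordinate projection sends $\Delta_k(X)$ onto a set with interior in $K^{k+1}$, which \ldots\ would produce a point of local dimension $>k$'') is pointing in the right direction, but the passage from ``projection has interior'' to ``there is a point where the projection is a local homeomorphism'' is exactly (Dim\ref{it:dim-Sk}), not a consequence of the definition of dimension. So the argument is circular until you explicitly invoke (Dim\ref{it:dim-Sk}) again.

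The paper handles this more cleanly and avoids working with $\Delta_k(X)$ directly. Having established density, (Dim\ref{it:dim-frontier}) gives $\dim\Delta_k(X)\leq\dim\overline{W_k(X)}=\dim W_k(X)$, so it suffices to bound $\dim W_k(X)$. If $\dim W_k(X)=l>k$, then (Dim\ref{it:dim-Sk}) applied to $W_k(X)$ gives $W_l(W_k(X))\neq\emptyset$; since $W_k(X)$ is \emph{open} in $X$, one has $W_l(W_k(X))\subseteq W_l(X)$, contradicting the disjointness of $W_k(X)$ and $W_l(X)$ from Property~\ref{py:Sk-Dk}. The point is that $W_k(X)$ is open in $X$ while $\Delta_k(X)$ need not be, so the containment $W_l(\,\cdot\,)\subseteq W_l(X)$ is available for the former but not the latter --- this is why the paper routes the argument through $W_k(X)$ rather than attempting your direct bound on $\Delta_k(X)$.
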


\begin{proof}
  We already now that $W_k(X)\subseteq\Delta_k(X)$ by Property~\ref{py:Sk-Dk}.
  Pick  $x\in\Delta_k(X)$ and an open ball $B\subseteq K^m$ centered at $x$.
  By shrinking $B$ if necessary we may assume that
  $\dim (B\cap X)=k$. From (Dim\ref{it:dim-Sk}) we know $W_k(B\cap X)\neq\emptyset$.
  On the other hand,
  $W_k(B\cap X)\subseteq W_k(X)$ because $B\cap X$ is open in $X$.
  Consequently $W_k(B\cap X)\subseteq B\cap W_k(X)$ and so $B\cap W_k(X)\neq\emptyset$.
  This proves density.

  By (Dim\ref{it:dim-frontier}) it only remains to prove that
   $W_k(X)$ has dimension $k$, provided it is not empty. Clearly $\dim W_k(X)\geq k$.
   If $\dim W_k(X)=l>k$ then by
  (Dim\ref{it:dim-Sk}) $W_l(W_k(X))$ is non-empty. But $W_k(X)$ is
  open in $X$, hence $W_l(W_k(X))$ is contained in $W_l(X)$. So
  $W_l(W_k(X))$ is contained both in $W_l(X)$ and in $W_k(X)$, a
  contradiction since $W_l(X)$ and $W_k(X)$ are disjoint by Property~\ref{py:Sk-Dk}.
\end{proof}

\medskip\noindent
For any two subsets $A,B$ of a topological space $X$, the {\df strong
order} $\Subset$ is defined by
\begin{displaymath}
  B\Subset A \iff B\subseteq\overline{A\setminus B}.
\end{displaymath}
If $X\subseteq K^m$ is definable and $f,g\in\CF(X)$ we define
\begin{displaymath}
  g\Subset f \iff \{g=0\}\Subset\{f=0\}.
\end{displaymath}

\begin{lemma}\label{le:def-strong}
  For every definable set $X\subseteq K^m$ and every $f,g\in\CF(X)$
  \begin{displaymath}
    g\Subset f \iff \CF(X)\models\forall h\,(f\sqsubseteq gh\to g\sqsubseteq h).
  \end{displaymath}
  In particular $\Subset$ is definable in $\Lring$.
\end{lemma}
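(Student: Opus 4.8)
The plan is to verify the claimed equivalence directly from the definitions, unwinding both sides in terms of zero sets. Recall that $g\Subset f$ means $\{g=0\}\subseteq\overline{\{f=0\}\setminus\{g=0\}}$, and that $f\sqsubseteq gh$ means $\{gh=0\}\subseteq\{f=0\}$, i.e. $\{g=0\}\cup\{h=0\}\subseteq\{f=0\}$, while $g\sqsubseteq h$ means $\{g=0\}\subseteq\{h=0\}$. Since $\sqsubseteq$ is $0$-definable in $\CF(X)$ by Theorem~\ref{th:PosetZeroIsJac}, the right-hand side is manifestly an $\Lring$-formula, so once the equivalence is established the final ``in particular'' is immediate; thus the whole content is the set-theoretic equivalence.

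For the forward implication, assume $\{g=0\}\subseteq\overline{\{f=0\}\setminus\{g=0\}}$ and take $h\in\CF(X)$ with $\{g=0\}\cup\{h=0\}\subseteq\{f=0\}$. I need $\{g=0\}\subseteq\{h=0\}$. Pick $x\in\{g=0\}$; by hypothesis there is a net (or, since everything is definable over a field with a definable topology, a definable curve) in $\{f=0\}\setminus\{g=0\}$ converging to $x$. On that set $f=0$ but $g\neq0$, hence by the containment $\{g=0\}\cup\{h=0\}\subseteq\{f=0\}$ we cannot conclude $h=0$ there directly — instead I argue the other way: on $\{f=0\}\setminus\{g=0\}$ the point is \emph{not} forced into $\{h=0\}$, so this approach needs refinement. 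The cleaner route: suppose for contradiction $g\not\sqsubseteq h$ after having chosen $h$ well. Actually the right choice of $h$ is the crux — one takes $h=\nu_2(f,\text{something})$ or, more simply, observes that the implication must hold for \emph{all} $h$, so to get a contradiction with $g\Subset f$ failing we should instantiate $h$. Let me restructure: prove the contrapositive of the forward direction is automatic and focus on the reverse.

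For the reverse implication, assume $g\not\Subset f$, i.e. there is $x_0\in\{g=0\}$ with $x_0\notin\overline{\{f=0\}\setminus\{g=0\}}$; equivalently there is an open ball $B$ centered at $x_0$ with $B\cap\{f=0\}\subseteq\{g=0\}$. I must produce $h\in\CF(X)$ with $f\sqsubseteq gh$ but $g\not\sqsubseteq h$, i.e. $\{g=0\}\cup\{h=0\}\subseteq\{f=0\}$ yet $\{g=0\}\not\subseteq\{h=0\}$. Take $h=\delta_{B^c}$ restricted to $X$ from Lemma~\ref{le:dirac}, whose zero set is $X\setminus B$ (an open ball, so using an open ball $B_0\supseteq B$ if one needs $B^c$ closed; in the valued case balls are clopen and this is immediate, in the ordered case use a slightly larger open ball). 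Then $\{h=0\}=X\setminus B$, so $\{g=0\}\cup\{h=0\}=\{g=0\}\cup(X\setminus B)$; this is contained in $\{f=0\}$ iff $B\cap\{g=0\}^c\subseteq\{f=0\}$... which need not hold. So instead take $h=\delta_B\cdot(\text{indicator adjustments})$: the right object is a function vanishing exactly on $(\{f=0\}\setminus B)\cup(\{g=0\})$ — concretely $h$ with $\{h=0\}=\{f=0\}\setminus B$, obtained as $\nu_2(f,\delta_{B^c})$ via Lemma~\ref{le:dirac}\ref{it:def-norm}. Then $\{g=0\}\cup\{h=0\}=\{g=0\}\cup(\{f=0\}\setminus B)\subseteq\{f=0\}$ since $\{g=0\}\subseteq\{f=0\}$ is \emph{not} assumed — so one first needs $\{g=0\}\subseteq\{f=0\}$, which is exactly what the $h=1$ instance of the right-hand side forces ($f\sqsubseteq g\cdot1$ gives $g\sqsubseteq1$, vacuous; rather $f\sqsubseteq g$ is not given). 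The genuine subtlety, and the step I expect to be the main obstacle, is extracting from the universally quantified right-hand side the base fact $\{g=0\}\subseteq\{f=0\}$ and simultaneously handling the strong-order gap: I anticipate the correct instantiation is $h=\nu_2(f,\delta_{B^c})$ together with the observation that $\{g=0\}\not\subseteq\{h=0\}$ precisely because $x_0\in\{g=0\}\cap B$ while $\{h=0\}\cap B=\{f=0\}\cap B\subseteq\{g=0\}$ gives $x_0\in\{h=0\}$ only if $f(x_0)=0$ — so one must also feed in that $f(x_0)=0$, which follows from $x_0\in\{g=0\}$ once $\{g=0\}\subseteq\{f=0\}$ is known. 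Assembling these instantiations carefully, with Lemma~\ref{le:dirac} supplying all the needed $\delta$'s and $\nu_2$, completes the argument.
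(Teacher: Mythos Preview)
Your proposal has a fundamental error: you have the direction of $\sqsubseteq$ reversed. In the paper, $g\sqsubseteq f$ is defined by $\{g=0\}\subseteq\{f=0\}$, so $f\sqsubseteq gh$ means $\{f=0\}\subseteq\{gh=0\}=\{g=0\}\cup\{h=0\}$, which is equivalent to $\{f=0\}\setminus\{g=0\}\subseteq\{h=0\}$; and $g\sqsubseteq h$ means $\{g=0\}\subseteq\{h=0\}$. With the correct reading, the right-hand side of the lemma says precisely: for every $h\in\CF(X)$, if $\{f=0\}\setminus\{g=0\}\subseteq\{h=0\}$ then $\{g=0\}\subseteq\{h=0\}$. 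This is exactly the statement that $\{g=0\}$ lies in every zero-set containing $\{f=0\}\setminus\{g=0\}$; since zero-sets form a basis of closed sets in $X$ (Lemma~\ref{le:dirac} gives $\delta_{B_0^c}$ for every open ball $B_0$), this is equivalent to $\{g=0\}\subseteq\overline{\{f=0\}\setminus\{g=0\}}$, i.e.\ $g\Subset f$. That is the entire argument, and it is the paper's.

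All of the difficulties you encountered --- the forward implication ``needing refinement'', the constructed $h$'s failing to satisfy the required containment, the apparent need to first secure $\{g=0\}\subseteq\{f=0\}$, the $h=1$ instance being vacuous --- are artifacts of the reversed convention. With your reading, the right-hand side would assert that for all $h$, $\{g=0\}\cup\{h=0\}\subseteq\{f=0\}\Rightarrow\{g=0\}\subseteq\{h=0\}$, which is a quite different (and rather odd) condition, not obviously related to the strong order; that is why nothing fit together. Once you correct the direction, no contrapositive, no instantiation tricks with $\nu_2$ or $\delta_B$, and no case analysis on ordered versus valued are needed beyond the single observation that zero-sets generate the closed definable sets.
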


\begin{proof}
  For every definable set $U\subseteq X$ that is open in $X$ and each $p_0\in U$
   there is a function $h\in\CF(X)$ with $h(p_0)\neq0$
  and $h(x)=0$ on $X\setminus U$ (using Lemma~\ref{le:dirac} it suffices to take $h=\delta_{B_0^c}$ for any
  open ball $B_0$ with center $p_0$ such that $B\cap X$ is contained in
  $U$), hence $p_0\in\{h\neq0\}\subseteq U$. Hence the sets $\{h=0\}$ with $h\in\CF(X)$ form
  a basis of closed sets of the topology of $X$.
  Since $g\Subset f$ just if every closed definable set containing $\{f=0\}\setminus\{g=0\}$ also
  contains $\{g=0\}$, we see that $g\Subset f$ if and only if for all $h\in\CF(X)$
  \begin{displaymath}
    \{f=0\}\setminus\{g=0\}\subseteq\{h=0\}\Rightarrow\{g=0\}\subseteq\{h=0\}.
  \end{displaymath}
  This is equivalent to $\{f=0\}\subseteq\{g=0\}\cup\{h=0\}\ \Longrightarrow\ \{g=0\}\subseteq\{h=0\}$, in other words, $f\sqsubseteq  gh\Longrightarrow g\sqsubseteq h$.
\end{proof}

\begin{proposition}\label{pr:def-Dk}
  Assume that $\cK$ satisfies \hyperlink{Dim}{(Dim)}. For every
  integer $k\geq0$ and every $p_0\in X$ the following are equivalent.
  \begin{enumerate}
    \item\label{it:LocDimGeqk}
      $\dim(X,p_0)\geq k$.
    \item\label{it:LocDimGeqkElem}
      $\forall v\in\CF(X)$, $v(p_0)\neq0\Rightarrow\exists f_0\Subset f_1\Subset \cdots \Subset f_k\in\CF(X)$ such that
      $f_0\notin\CF(X)^\times$ and $\{f_k=0\}$ is disjoint from $\{v=0\}$.
  \end{enumerate}
Furthermore, there is a parameter-free formula $\chi_k(p)$ in
  $\Lring$ such that $\CF(X)\models\chi_k(p)$ if and only if $p$ has a
  single zero $p_0\in X$ and $\dim(X,p_0)=k$.
\end{proposition}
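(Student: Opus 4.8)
The plan is to establish the equivalence of \ref{it:LocDimGeqk} and \ref{it:LocDimGeqkElem} and then to read off $\chi_k(p)$ from it, using that ``$p$ has a single zero'' is expressed by $\point(p)$ (Lemma~\ref{le:PtIntIsol}). Throughout I use that the relations $g\sqsubseteq f$ and $g\Subset f$ are definable in $\Lring$ (Theorem~\ref{th:PosetZeroIsJac} and Lemma~\ref{le:def-strong}), and that $g\Subset f$ forces $\{g=0\}\subseteq\{f=0\}$ since $\{f=0\}$ is closed.

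For \ref{it:LocDimGeqkElem}$\Rightarrow$\ref{it:LocDimGeqk} I would argue contrapositively. Suppose $\dim(X,p_0)<k$; pick an open ball $B_0$ centered at $p_0$ with $\dim(B_0\cap X)<k$ and set $v=\delta_{B_0^c}$, so that $v(p_0)\neq0$ and $\{v=0\}=X\setminus B_0$. If functions $f_0\Subset f_1\Subset\cdots\Subset f_k$ existed with $f_0\notin\CF(X)^\times$ and $\{f_k=0\}$ disjoint from $\{v=0\}$, then $\emptyset\neq\{f_0=0\}\subseteq\{f_1=0\}\subseteq\cdots\subseteq\{f_k=0\}\subseteq B_0\cap X$. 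The key observation is that a strong inclusion of nonempty closed sets strictly lowers the dimension: if $g\Subset f$ and $\{g=0\}\neq\emptyset$, then $Y:=\{f=0\}\setminus\{g=0\}$ is a nonempty definable set (otherwise $\{g=0\}\subseteq\overline{Y}=\emptyset$), and $\{g=0\}\subseteq\overline{Y}\setminus Y=\partial Y$, hence $\dim\{g=0\}\leq\dim\partial Y<\dim Y\leq\dim\{f=0\}$ by \hyperlink{Dim}{(Dim\ref{it:dim-frontier})}. Since $\{f_0=0\}\neq\emptyset$ has dimension $\geq0$, iterating gives $\dim\{f_k=0\}\geq k$, contradicting $\dim(B_0\cap X)<k$.

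For \ref{it:LocDimGeqk}$\Rightarrow$\ref{it:LocDimGeqkElem}, fix $v$ with $v(p_0)\neq0$; the case $k=0$ is immediate (take $f_0=\delta_{p_0}$), so assume $k\geq1$. Then $U:=\{v\neq0\}$ is a definable open neighborhood of $p_0$ in $X$, so $d:=\dim U\geq\dim(X,p_0)\geq k$. By \hyperlink{Dim}{(Dim\ref{it:dim-Sk})}, $W_d(U)\neq\emptyset$; I would choose $q\in W_d(U)$, an open ball $B_0$ centered at $q$ small enough that $B_0\cap X\subseteq U$, and a coordinate projection $\pi\colon K^m\to K^d$ for which $\pi|_{B_0\cap X}$ is a homeomorphism onto an open set $V\subseteq K^d$, then an open ball $B''$ centered at $q$ with $\overline{B''}\subseteq B_0$, so that $\pi(B''\cap X)$ is open in $K^d$ and contains a ball $V'$ about $\pi(q)$. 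Inside $V'$ I build a nested chain $C_0\subseteq C_1\subseteq\cdots\subseteq C_k\subseteq V'$ in which $C_j$ is a product of $j$ small coordinate balls about $\pi(q)$ with the remaining coordinates frozen at points of the corresponding balls; since balls in $K$ have no isolated points, $C_{j-1}\Subset C_j$ for each $j$ and $C_0$ is a singleton. Picking bounded continuous definable $\gamma_j\colon K^d\to K$ with $\{\gamma_j=0\}=C_j$ (the indicator of the complement of $C_j$ in the valued case, a truncated $\max$-distance to $C_j$ in the ordered case), I set $f_j:=\nu_2(\gamma_j\circ\pi,\,\delta_{\overline{B''}})$ restricted to $X$, with $\nu_2$ and $\delta_{\overline{B''}}$ from Lemma~\ref{le:dirac}. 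Then $\{f_j=0\}=\{x\in X\tq\pi(x)\in C_j\}\cap(\overline{B''}\cap X)$, and since $\overline{B''}\subseteq B_0$ and $\pi|_{B_0\cap X}$ is injective, this is exactly $Z_j:=(\pi|_{B_0\cap X})^{-1}(C_j)$. As $\pi|_{B_0\cap X}$ is a homeomorphism, $Z_0\Subset Z_1\Subset\cdots\Subset Z_k$ holds in $B_0\cap X$, hence in $X$ (closures only grow in a larger space); $Z_0$ is a singleton, so $f_0\notin\CF(X)^\times$; and $Z_k\subseteq B_0\cap X\subseteq U=\{v\neq0\}$, so $\{f_k=0\}$ is disjoint from $\{v=0\}$. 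This establishes \ref{it:LocDimGeqkElem}.

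For the final assertion, condition \ref{it:LocDimGeqkElem} for the unique zero $p_0$ of $p$ is captured by a parameter-free formula $\theta_k(p)$ in $\Lring$: rewrite ``$v(p_0)\neq0$'' as $\lnot(p\sqsubseteq v)$, ``$f_0\notin\CF(X)^\times$'' as $\forall w\,(wf_0\neq1)$, ``$\{f_k=0\}$ disjoint from $\{v=0\}$'' as $\inter(v,f_k,1)$ (Lemma~\ref{le:PtIntIsol}), and each $f_{j-1}\Subset f_j$ via Lemma~\ref{le:def-strong}. Then $\chi_k(p):\equiv\point(p)\land\theta_k(p)\land\lnot\theta_{k+1}(p)$ works: if $\CF(X)\models\point(p)$ then $p$ has a single zero $p_0$, and by the equivalence above $\theta_k(p)\land\lnot\theta_{k+1}(p)$ holds iff $\dim(X,p_0)\geq k$ and $\dim(X,p_0)<k+1$, i.e.\ $\dim(X,p_0)=k$. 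I expect the main obstacle to be the geometric construction in \ref{it:LocDimGeqk}$\Rightarrow$\ref{it:LocDimGeqkElem}: exhibiting members of $\CF(X)$ whose zero sets are \emph{exactly} the sets $Z_j$ of the strong chain and lie inside $U$. The delicate point is that $Z_j$, although closed in the open subspace $B_0\cap X$, need not be closed in $X$ a priori, and that $\gamma_j\circ\pi$ vanishes on all of $\{\pi(x)\in C_j\}$; both are resolved by the cut-down with $\delta_{\overline{B''}}$, which works precisely because $\overline{B''}$ is a closed ball lying strictly inside $B_0$.
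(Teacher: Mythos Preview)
Your overall architecture is the paper's: for \ref{it:LocDimGeqkElem}$\Rightarrow$\ref{it:LocDimGeqk} you use $v=\delta_{B_0^c}$ and the observation that a strong inclusion strictly raises dimension via (Dim\ref{it:dim-frontier}); for \ref{it:LocDimGeqk}$\Rightarrow$\ref{it:LocDimGeqkElem} you pass to a chart point $q\in W_d$, build a strong chain of length $k$ in $K^d$, and pull it back through the coordinate projection, cutting down with a closed ball to force the zero sets to sit inside $\{v\neq 0\}$; and the formula $\chi_k$ is then read off exactly as in the paper. The one substantive difference is the choice of the chain in $K^d$: the paper takes affine coordinate subspaces through $q_0$ and uses $g_i=\nu_i(\phi_1,\dots,\phi_i)$ (with $\phi(y)=y-q_0$), which gives continuous definable functions with the right zero sets uniformly in both the ordered and the valued case.

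Your variant with $C_j$ a product of $j$ balls and $d-j$ frozen coordinates is fine \emph{set-theoretically}, but your proposed $\gamma_j$ in the valued case is not: for $j<d$ the set $C_j$ is closed but not open in $K^d$ (the frozen coordinates are singletons, which are not open), so the indicator of its complement is discontinuous at every point of $C_j$. This is the only place the argument breaks. The fix is immediate and is precisely what the paper does implicitly: handle the frozen coordinates with $\nu_{d-j}$ applied to those coordinates minus their frozen values, handle the ball coordinates with the (genuinely clopen) indicator, and combine with $\nu_2$; or, more simply, replace your $C_j$ by the affine subspaces $\{\phi_1=\cdots=\phi_{d-j}=0\}$ and take $\gamma_j=\nu_{d-j}(\phi_1,\dots,\phi_{d-j})$ as in the paper. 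With that correction your proof goes through and is essentially the paper's.
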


\begin{proof}
\ref{it:LocDimGeqk}$\Rightarrow $\ref{it:LocDimGeqkElem}.
  Assume that $\dim(X,p_0)=l\geq k$ and take $v\in\CF(X)$ with
  $v(p_0)\neq0$. Then $\{v\neq0\}$ is a neighborhood of $p_0$ in $X$ hence by
  Lemma~\ref{le:Sk-dense} there is a point $q_0\in\{v\neq0\}\cap W_l(X)$. Let
  $V$ be a definable neighborhood of $q_0$ in $X$, contained in
  $\{v\neq0\}\cap W_l(X)$ and homeomorphic to an open subset $W$ of $K^l$  via the restriction of a coordinate projection $\pi:K^m\to K^l$.

  Let $\phi=(\phi_1,\ldots,\phi_l):K^l\lra K^l$ be defined by $\phi(y)=y-q_0$. For
  $1\leq i\leq l$ let
  $g_i=\nu_i(\phi_1,\dots,\varphi_i)$ where $\nu_i\in\CF(K^i)$ is the map defined in
  Lemma~\ref{le:dirac} \ref{it:def-norm}. Each $g_i$ is in $\CF(K^l)$ and $\{g_i=0\}$ is the affine
  subspace of $K^m$ defined by $\varphi_1=\cdots=\varphi_i=0$, hence $g_0\Subset\cdots\Subset g_l$. Let
  $B$ be a closed ball in $K^m$ centered at $q_0$ and contained in
  $V$. Take $\delta_B\in\CF(K^m)$ with $\{\delta_B=0\}=B$ as given
  by Lemma~\ref{le:dirac} \ref{it:dirac-ball}. Finally let
  $f_i$ be the restriction of $\nu_2(g_i\circ\pi,\delta_B)$ to $X$, where $\nu_2$ is
  defined in Lemma~\ref{le:dirac} \ref{it:def-norm}. Clearly $f_i\in\CF(X)$, $f_0(q_0)=0$
  hence $f_0\notin\CF(X)^\times$, $\{f_k=0\}\subseteq B$ is disjoint from $\{v=0\}$, and
  $f_0\Subset \cdots\Subset f_k$ because $\pi$ maps the zero-set of each
  $f_i$ homeomorphically to $\{g_i=0\}\cap\pi(B)$.

\noindent
\ref{it:LocDimGeqkElem}$\Rightarrow $\ref{it:LocDimGeqk}.
Let $B_0$ be
  an open ball in $K^m$ centered at $p_0$. We have to prove that
  $\dim (B_0\cap X)\geq k$. Let $v\in\CF(X)$ be such that $v(x)=0$ on $X\setminus B_0$
  and $v(p_0)\neq0$ (e.g. one can choose $v=\delta_{B_0^c}$ as in
  Lemma~\ref{le:dirac} \ref{it:dirac-co-ball}). By assumption
  there are $f_0\Subset\cdots\Subset f_k\in\CF(X)$ such that $f_0\notin\CF(X)^\times$ and $\{f_k=0\}$
  is disjoint from $\{v=0\}$, hence contained in $B_0\cap X$. It then
  suffices to prove that each set $Z_i=\{f_i=0\}$ has dimension $\geq i$.
  This holds for $Z_0$ because $f_0\notin\CF(X)^\times$ implies $Z_0\neq\emptyset$. When $i>0$,
  the set $Z_i$ is non-empty since it contains $Z_0$; further $Z_{i-1}\subseteq
  \partial(Z_i\setminus Z_{i-1})$ because $Z_{i-1}\Subset Z_i$; consequently $\dim Z_{i-1}<\dim Z_i$
  by (Dim\ref{it:dim-frontier}). Item \ref{it:LocDimGeqk} now follows by induction on
  $i$.

  Using Lemma~\ref{le:def-strong} and Lemma~\ref{le:PtIntIsol},
  the equivalence of \ref{it:LocDimGeqk} and \ref{it:LocDimGeqkElem} implies
  the existence of a parameter-free formula $\chi_{\geq k}(p)$ in
  $\Lring$, such that for every $p\in \CF(X)$ we have
  $\CF(X)\models\chi_{\geq k}(p)$ just
  if $p$ has a single zero $p_0\in X$ and $\dim(X,p_0)\geq k$.
  We then take $\chi_k$ as $\chi_{\geq k}\land \lnot\chi_{\geq k+1}$.
\end{proof}

\begin{proposition}\label{pr:mdir-open}
  Every definable open subset $U$ of $K^r$ with $r\geq2$ has arbitrarily
  many germs at every point $p_0\in U$.
\end{proposition}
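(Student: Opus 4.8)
The plan is to translate so that $p_0=0$ and then, for each fixed $k\geq1$, to produce an open ball $V$ with centre $0$ contained in $U$ together with functions $s_1,\dots,s_k\in\CF(V)$ that vanish on separated germs at $0$; the germs will be $k$ distinct lines through $0$ inside the plane $K\times K\times\{0\}^{r-2}$, which is where the hypothesis $r\geq2$ enters. First I would pick pairwise distinct scalars $c_1,\dots,c_k\in K$ with $|c_j|>1$ for all $j$; such scalars exist, since in the ordered case one may take $c_j=j+1$ and in the valued case $c_j=\pi^{-j}$ for some $\pi$ with $0<|\pi|<1$ (the valuation being non-trivial). Put
\[
  L_j=\big\{x\in K^r\tq x_2=c_jx_1,\ x_3=\cdots=x_r=0\big\},
\]
and let $s_j$ be the restriction to $V$ of the map $x\mapsto\nu_{r-1}(x_2-c_jx_1,x_3,\dots,x_r)$, where $\nu_{r-1}$ is given by Lemma~\ref{le:dirac}\ref{it:def-norm}; then $s_j\in\CF(V)$ and $\{s_j=0\}=L_j\cap V$. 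Since the $c_j$ are distinct the lines $L_j$ meet pairwise only at $0$, so the sets $S_j=(L_j\cap V)\setminus\{0\}$ are pairwise disjoint, which is (S1); and $0$ is non-isolated in $L_j\cap V$ because every neighbourhood of $0$ contains a point $(t,c_jt,0,\dots,0)$ with $t\neq0$, which is (S2).

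The delicate point, which I would single out as the crux, is the construction of separating functions $\delta_i\in\CF(V\setminus\{0\})$: these must be continuous and bounded on the \emph{whole} punctured ball, not merely near the germs. The naive candidate, a Lagrange interpolation in the slope $x_2/x_1$, is undefined on the hyperplane $\{x_1=0\}$. Instead I would interpolate in the bounded function
\[
  t(x)=\frac{x_1}{\nu_r(x)},
\]
with $\nu_r$ again from Lemma~\ref{le:dirac}\ref{it:def-norm}. On $V\setminus\{0\}$ we have $\nu_r(x)\neq0$, so $t$ is definable and continuous there, and $|t(x)|=|x_1|/\|x\|\leq1$, so $t$ is bounded with values in $\cO$. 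Moreover on $S_j$ one has $x=(x_1,c_jx_1,0,\dots,0)$ with $|x_1|<|c_jx_1|=\|x\|$, so coordinate $2$ uniquely realises $\|x\|$ and hence $\nu_r(x)=x_2=c_jx_1$; therefore $t$ is constantly equal to $\gamma_j:=1/c_j$ on $S_j$, and the $\gamma_j$ are pairwise distinct. Setting
\[
  \delta_i(x)=\prod_{j\neq i}\frac{t(x)-\gamma_j}{\gamma_i-\gamma_j}\qquad(1\leq i\leq k),
\]
each $\delta_i$ is a polynomial in $t(x)$, hence continuous and definable on $V\setminus\{0\}$; it is bounded because $t$ is, which is (S4); and it equals $1$ on $S_i$ and $0$ on $S_j$ for $j\neq i$, which is (S3).

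Thus $s_1,\dots,s_k$ vanish on separated germs at $0$ with separating functions $\delta_1,\dots,\delta_k$, and since $k$ was arbitrary, $U$ has arbitrarily many germs at $p_0$; finally $p_0\in U$ was arbitrary. The parts I expect to be routine are the continuity of $\nu_r$ and $\nu_{r-1}$ (already granted by Lemma~\ref{le:dirac}) and the verification that $t$ is constant along each $L_j$; the genuine idea is the replacement of the partially defined, unbounded slope by the globally defined, bounded function $t=x_1/\nu_r(x)$ before interpolating.
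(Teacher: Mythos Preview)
Your proof is correct and follows the same overall plan as the paper---choose $k$ distinct lines through $p_0$ as the germs and cut them out by functions $s_j$ built from the $\nu$-map of Lemma~\ref{le:dirac}---but your construction of the separating functions is genuinely different. The paper fixes a hyperplane $H\ni p_0$ and the linear projections $\sigma_i:K^r\to L_i$ along $H$, then takes $\delta_i=\prod_{j\neq i}\delta_{i,j}$ with $\delta_{i,j}(x)=s_j(x)/s_j(\sigma_i(x))$; the verification of (S3) is immediate, while (S4) and continuity on $K^r\setminus\{p_0\}$ rely on the degree-one homogeneity of the $s_j$ and the linearity of $\sigma_i$. Your idea of replacing the undefined slope $x_2/x_1$ by the globally defined, $\cO$-valued function $t(x)=x_1/\nu_r(x)$ and then doing ordinary Lagrange interpolation in $t$ is a clean alternative: it makes boundedness and continuity on all of $V\setminus\{0\}$ transparent, since $\delta_i$ is a polynomial in a function with values in $\cO$. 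The constraint $|c_j|>1$, which forces $\nu_r$ to pick the second coordinate along $L_j$ and hence makes $t$ constant on each $S_j$, is exactly the hinge of your argument and is handled correctly. In short, both proofs execute the same geometric idea; yours trades the projection machinery for an explicit bounded ``normalised first coordinate'' and gains an argument whose boundedness check is essentially free.
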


\begin{proof}
  For each $k\geq 1$ we need to define $s_i$ and $\delta_i$ verifying
  conditions (S1)-(S4) of Definition~\ref{de:vanish-on-sep-germs}.
  It suffices to do the case of $K^r$ (and then take restrictions to $U$ of the functions
  $s_i$ and $\delta_i$ built for $K^r$).
  Let $V_k=K^r$, take a hyperplane $H\subset K^r$
  containing $p_0$ and $k$ distinct lines $L_1,\dots,L_k$ passing through
  $p_0$ and not contained in $H$. Let $\sigma_i:K^r\to L_i$ be the projection
  onto $L_i$ along $H$. Now let $s_i(x)=\nu_r(x-\sigma_i(x))$ where
  $\nu_r$ is given in Lemma~\ref{le:dirac} \ref{it:def-norm}.
  Conditions (S1) and (S2) are
  fulfilled since $\{s_i=0\}=L_i$ for each $i$.
  Finally let $\delta_i=\prod_{j\neq j}\delta_{i,j}$ where
  $\delta_{i,j}(x)=s_j(x)/s_j(\sigma_i(x))$. Clearly $\delta_{i,j}(x)\in\CF(K^r\setminus\{p_0\})$
  and we have $\delta_{i,j}(x)=1$ on $L_i\setminus\{p_0\}$ and $\delta_{i,j}(x)=0$ on
  $L_j\setminus\{p_0\}$, hence $\delta_i$ has properties (S3) and (S4).
\end{proof}

\medskip\noindent
The intuition coming from real geometry suggests that having local
dimension $\geq2 $ should be necessary (and sufficient) for a definable
set to have arbitrarily many germs at a point. In contrast, the
next result shows that local dimension $\geq1$ (which is obviously
necessary) is sufficient at least if $\cK$ has the following property,
which holds true for example in $P$\--minimal structures like the $p$\--adics.
\begin{description}
  \item[(Z)]\hypertarget{ConditionZ}{}
    $v(K^\times)$ is elementarily equivalent to $(\ZZ,+,\leq)$ (i.e., it is a $\ZZ$\--group)
    and for every definable set $X\subseteq K^\times$, $v(X)$ is definable in
    $v(K^\times)$.
\end{description}

\begin{proposition}\label{pr:mdir-Z}
  If $\cK$ satisfies \hyperlink{ConditionZ}{(Z)}, then every definable open subset $U$ of $K$
  has arbitrarily many germs at every point of $U$.
\end{proposition}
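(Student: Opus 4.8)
The plan is to reduce at once to an open ball and then exploit the total disconnectedness of the valuation topology together with the $\ZZ$-group structure of $v(K^\times)$: a punctured ball around $p_0$ splits into $k$ relatively clopen pieces according to the residue of the valuation modulo $k$, and the functions witnessing ``arbitrarily many germs'' are built directly from these pieces. (We are in the valued case here, since \hyperlink{ConditionZ}{(Z)} refers to a valuation.)

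Fix $p_0\in U$ and an integer $k\geq1$. Since $U$ is open there is an open ball $B_0$ centered at $p_0$ with $B_0\subseteq U$, and by Definition~\ref{de:vanish-on-sep-germs} it suffices to exhibit $k$ functions $s_0,\dots,s_{k-1}\in\CF(B_0)$ that vanish on separated germs at $p_0$ with separating functions in $\CF(B_0\setminus\{p_0\})$. For $i\in\{0,\dots,k-1\}$ I would set
\[
  T_i=\big\{x\in B_0\setminus\{p_0\}\tq v(x-p_0)\equiv i \pmod{k}\big\}.
\]
Since $v(K^\times)$ is a $\ZZ$-group (only the first clause of \hyperlink{ConditionZ}{(Z)} is needed), congruence conditions on valuations are definable in $\cK$, so each $T_i$ is definable; moreover every residue class is unbounded above in $v(K^\times)$, whence $p_0\in\overline{T_i}$, and the $T_i$ partition $B_0\setminus\{p_0\}$. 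The key geometric point is that each $T_i$ is a union of annuli $\{x\tq v(x-p_0)=\gamma\}$, which are clopen in $K$, so $T_i$ is clopen in $B_0\setminus\{p_0\}$.

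Next I would put
\[
  s_i(x)=\begin{cases} x-p_0 & \text{if }x\in B_0\setminus T_i,\cr 0 & \text{if }x\in T_i,\end{cases}
\]
and let $\delta_i\in\CF(B_0\setminus\{p_0\})$ be the characteristic function of $T_i$. Then $s_i$ is definable with zero set $\{p_0\}\cup T_i$, and it is continuous: off $p_0$ the valuation $v(\cdot-p_0)$ is locally constant, so $s_i$ agrees locally either with $x\mapsto x-p_0$ or with $0$, while at $p_0$ one has $|s_i(x)|\leq|x-p_0|\to0$. Likewise $\delta_i$ is continuous (as $T_i$ is clopen in $B_0\setminus\{p_0\}$), definable, and $\{0,1\}$-valued hence bounded. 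Putting $S_i=\{s_i=0\}\setminus\{p_0\}=T_i$ one then checks (S1)--(S4) at once: the $S_i$ are pairwise disjoint; each $s_i$ has the non-isolated zero $p_0$ because $p_0\in\overline{T_i}$; for $i\neq j$ we have $\delta_i\equiv1$ on $S_i$ and $\delta_i\equiv0$ on $S_j$ since $T_i\cap T_j=\emptyset$; and each $\delta_i$ is bounded. As $k$ was arbitrary, $U$ has arbitrarily many germs at $p_0$.

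The only step calling for genuine care is the continuity of the $s_i$ and $\delta_i$, and this is precisely where the valuation topology is essential: away from $p_0$ the valuation is locally constant, so a partition into $k$ relatively clopen pieces is available. The same construction collapses in the $o$-minimal setting, since a punctured interval admits no partition into more than two relatively clopen subsets --- compare the $o$-minimal case discussed after Definition~\ref{de:vanish-on-sep-germs}.
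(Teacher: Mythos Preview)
Your proof is correct and follows essentially the same idea as the paper's: partition a punctured neighborhood of $p_0$ into $k$ clopen pieces according to the residue of $v(x-p_0)$ modulo $k$, and let the $s_i$ vanish on these pieces.

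There is one noteworthy difference. The paper works directly on $U$ and invokes the \emph{second} clause of \hyperlink{ConditionZ}{(Z)} to see that $\{v(u-p_0)\tq u\in U\setminus\{p_0\}\}$ is a Presburger set, from which an arithmetic progression is extracted. By first shrinking to an open ball $B_0\subseteq U$, you sidestep this entirely: the set of valuations $v(x-p_0)$ for $x\in B_0\setminus\{p_0\}$ is simply a final segment of $v(K^\times)$, and the congruence classes modulo $k$ are already definable in $\cK$ (using $\cO$ and a uniformizer as parameter). So your argument uses only the first clause of \hyperlink{ConditionZ}{(Z)}, which is a genuine simplification. You are also more explicit than the paper in writing down the separating functions $\delta_i$.
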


\begin{proof}
  For any $p_0\in U$, the set $\{v(u-p_0)\tq u\in U\setminus\{p_0\}\}$ is a Presburger
  set by \hyperlink{ConditionZ}{(Z)}, and it is not bounded above in $v(K^\times)$. Hence it
  contains some set
  \begin{displaymath}
    A=\big\{\xi\in v(K^\times)\tq \alpha\leq \xi\mbox{ and }\xi\equiv a\,[N]\big\}
  \end{displaymath}
  for some $\alpha\in v(K^\times)$ and some integers $a$ and $N\geq1$ (where $\xi\equiv
  a\,[N]$ denotes the usual congruence relation $\xi-a\in Nv(K^\times)$). Given
  an integer $k\geq1$, for $1\leq i\leq k$ let
  \begin{displaymath}
    A_i=\big\{\xi\in A\tq \alpha\leq \xi\mbox{ and }\xi\equiv a+iN\,[kN]\big\}
  \end{displaymath}
  and let $S_i=v^{-1}(A_i)\cap U$. The sets $S_i$ are pairwise disjoint,
  clopen in $U\setminus\{p_0\}$, and $p_0$ belongs to the closure of each of
  them. So the functions defined by $s_i(x)=0$ if $x\in S_i$ and
  $s_i(x)=\delta_{p_0}(x)$ otherwise (where $\{\delta_{p_0}=0\}=\{p_0\}$, see
  Lemma~\ref{le:dirac}) are continuous on $U$. Clearly each of them
  vanishes on a germ at $p_0$ since $\{s_i=0\}=S_i\cup\{p_0\}$.
\end{proof}

\begin{remark}\label{re:mdir-locdim}
  For all definable sets $X\subseteq K^m$, $Y\subseteq K^n$ and every point $p_0\in
  X$, if there is a definable homeomorphism $\varphi:U\to V$ such that $U$
  (resp. $V$) is a definable neighborhood of $p_0$ in $X$ (resp. of
  $\varphi(p_0)$ in $Y$) then $X$ has arbitrarily many germs at $p_0$ if and
  only if $Y$ has arbitrarily many germs at $\varphi(p_0)$, because this
  property is local. It then follows from Proposition~\ref{pr:mdir-open}
  that for every definable set $X\subseteq K^m$ and every integer $k\geq2$, $X$
  has arbitrarily many germs at every point of $W_k(X)$ for every $k\geq2$ (and
  even for $k=1$ if $\cK$ satisfies \hyperlink{ConditionZ}{(Z)} by Proposition~\ref{pr:mdir-Z}).
\end{remark}

For every integer $k\geq0$, every definable set $X\subseteq K^m$ and every $Y\subseteq K$
let $\CF_k(X,Y)$ (resp. $\CF_{\geq k}(X,Y)$) be the set of functions
$f\in\CF(X)$ such that $f(x)\in Y$ for every $x\in\Delta_k(X)$ (resp. $x\in\bigcup_{l\geq
k}\Delta_l(X)$), and $f(x)=0$ otherwise.

\begin{theorem}\label{th:def-fns-Dk-Z}
  Assume that $\cK$ satisfies \hyperlink{Dim}{(Dim)} and \hyperlink{BFin}{(BFin)}. Let $X\subseteq K^m$ be a
  definable set and let $\tau$ be a non-zero element of $K$.
  \begin{enumerate}
    \item
      If $|\tau|\neq|1|$ then the set $\CF_k(X,\tau^\ZZ)$ is definable in
      $\Lring\cup\{\cB\}$ for every integer $k\geq2$, hence so is $\CF_{\geq
      2}(X,\tau^\ZZ)$. If moreover $\cK$ satisfies \hyperlink{ConditionZ}{(Z)} the same holds true
      for $k\geq1$.
    \item
      If $K$ has a total order $\preccurlyeq$ compatible with $+$ and
      definable in $\Lring\cup\{\cO\}$ then the set
      $\CF_k(X,\tau\ZZ)$ is definable in $\Lring\cup\{\cB\}$ for every
      integer $k\geq2$, hence so is $\CF_{\geq 2}(X,\tau\ZZ)$.
  \end{enumerate}
\end{theorem}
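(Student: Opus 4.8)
The plan is to localise the statement to individual points and then invoke Theorem~\ref{th:def-chunk}, using the dimension-theoretic machinery already developed. First I would fix $k\geq 2$ (or $k\geq 1$ under \hyperlink{ConditionZ}{(Z)}) and use Proposition~\ref{pr:def-Dk} to obtain the formula $\chi_k(p)$ that holds in $\CF(X)$ exactly when $p$ has a single zero $p_0$ with $\dim(X,p_0)=k$. By Remark~\ref{re:mdir-locdim}, every such $p_0$ lies in $\Delta_k(X)$, and combining Lemma~\ref{le:Sk-dense} with Proposition~\ref{pr:mdir-open} (resp. Proposition~\ref{pr:mdir-Z} when $k=1$), the points of $\Delta_k(X)$ at which $X$ has arbitrarily many germs are dense in $\Delta_k(X)$. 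So for each $p_0\in\Delta_k(X)$ with arbitrarily many germs we may apply Theorem~\ref{th:def-chunk}: with $\tau^*$ the constant function with value $\tau$ and $p$ a function whose sole zero is $p_0$, the formula $\mInt(h,p,\tau^*)$ (resp. $\aInt$ in the ordered case) holds iff $h(p_0)\in\tau^\ZZ$ (resp. $\tau\ZZ$).

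Next I would assemble the global definition. A function $f\in\CF(X)$ belongs to $\CF_k(X,\tau^\ZZ)$ iff $f(x)\in\tau^\ZZ$ for all $x\in\Delta_k(X)$ and $f(x)=0$ elsewhere. The first condition I would express as: for every $p$ with $\chi_k(p)$, if $p_0$ (the zero of $p$) is a point at which $X$ has arbitrarily many germs, then $\mInt(f,p,\tau^*)$ holds. The side condition ``$p_0$ has arbitrarily many germs'' is itself definable — for $k\geq2$ it is automatic on $\Delta_k(X)$, and more carefully one can either absorb it into $\chi_k$ via $W_k$ (using that $W_k(X)$ is definable and, by Remark~\ref{re:mdir-locdim}, every point of $W_k(X)$ has arbitrarily many germs) or quantify over the separating-function data of Definition~\ref{de:vanish-on-sep-germs} directly in $\CF(X)$. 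Since $W_k(X)$ is dense in $\Delta_k(X)$ and $f$ is continuous, $f$ takes values in the closed set $\overline{\tau^\ZZ\cup\{0\}}$ on $\Delta_k(X)$ as soon as it does so on $W_k(X)$; but one must check that this closure adds nothing problematic — in the ordered case $\overline{\tau\ZZ}=\tau\ZZ$ since $\tau\ZZ$ is closed and discrete by \hyperlink{BFin}{(BFin)} applied to the relevant bounded pieces, and similarly $\overline{\tau^\ZZ\cup\{0\}}=\tau^\ZZ\cup\{0\}$ in the valued case. The clause $f(x)=0$ off $\bigcup_{l\geq k}\Delta_l(X)$ I would handle using $\sqsubseteq$ and the definable sets $\chi_{\geq k}$: $f$ vanishes on the (closed, definable) complement of $\bigcup_{l\geq k}\Delta_l(X)$ iff $\{f=0\}$ contains that set, which can be written via the lattice $\ZS(X)$ of zero sets that is interpretable in $\CF(X)$ by Lemma~\ref{le:PtIntIsol} together with Proposition~\ref{pr:def-Dk}. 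Taking the conjunction gives a single $\Lring\cup\{\cB\}$-formula defining $\CF_k(X,\tau^\ZZ)$, and $\CF_{\geq2}(X,\tau^\ZZ)$ is then definable since $\bigcup_{l\geq k}\Delta_l(X)$ is a finite union of the $\Delta_l$'s up to dimension $m$, each captured by $\chi_l$.

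The main obstacle I expect is the density/continuity passage: Theorem~\ref{th:def-chunk} only constrains $f(p_0)$ at points $p_0$ with arbitrarily many germs, which by Lemma~\ref{le:Sk-dense} and Remark~\ref{re:mdir-locdim} form merely a dense subset $W_k(X)$ of $\Delta_k(X)$, not all of it. One therefore cannot directly say ``$f(p_0)\in\tau^\ZZ$ for all $p_0\in\Delta_k(X)$''; instead one argues that continuity of $f$ forces $f(\Delta_k(X))\subseteq\overline{f(W_k(X))}\subseteq\overline{\tau^\ZZ}$, and then that this closure is harmless. The converse direction (every $f$ with the right values satisfies the formula) is the easy half, following directly from Theorem~\ref{th:def-chunk}. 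The more precise and more general statement alluded to in the remark after the theorem presumably records exactly which local-dimension strata and which value sets $Y\subseteq K$ the argument covers, and the \hyperlink{ConditionZ}{(Z)} refinement to $k=1$ is immediate once Proposition~\ref{pr:mdir-Z} supplies arbitrarily many germs at every point of a one-dimensional definable open chart.
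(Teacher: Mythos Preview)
Your overall strategy is correct and very close to the paper's: both arguments localise via $\chi_k$, invoke $\mInt$ from Theorem~\ref{th:def-chunk}, and rely on the density of $W_k(X)$ in $\Delta_k(X)$ (Lemma~\ref{le:Sk-dense}) together with Remark~\ref{re:mdir-locdim} to bridge from points with arbitrarily many germs to all of $\Delta_k(X)$ by continuity.

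The one substantive difference is in how the density step is packaged into the formula. You propose to restrict the universal quantifier over $p$ to points where $X$ has arbitrarily many germs, and then argue externally via closure. The paper avoids this entirely: its formula $\FORM{Z}_k(h,\tau^*)$ says that for every $p$ with $\chi_k(p)$ and every neighbourhood $\{v\neq 0\}$ of $p_0$, there \emph{exists} a point $q_0$ in that neighbourhood with $\mInt(h,q,\tau^*)$. Thus the density argument is built directly into the formula, and one never needs to express ``$p_0$ has arbitrarily many germs'' or ``$p_0\in W_k(X)$'' at all. Your option~(b), quantifying over the separating-function data of Definition~\ref{de:vanish-on-sep-germs}, does not yield a single first-order formula, since ``arbitrarily many'' is a schema over $k\in\NN$; your option~(a), restricting to $W_k(X)$, can be made to work via Proposition~\ref{pr:local-formulas} but pulls in the $\cK$-formula defining $W_k(X)$ and hence parameters tied to $X$. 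The paper's trick sidesteps both issues and keeps the defining formula uniform with only $\tau^*$ as parameter (cf.\ Remark~\ref{re:def-fns-Dk-Z}). You also carefully treat the clause ``$f(x)=0$ off $\Delta_k(X)$'', which the paper's proof does not address explicitly.
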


By Remark~\ref{re:def-O3}, Theorem~\ref{th:def-fns-Dk-Z} leads to
definability in $\Lring$ in many cases (in particular if $\cK$ is an
expansion of a local field or if the theory of $\cK$ is dp-minimal)
provided $K$ is not algebraically closed.

\begin{remark}\label{re:def-fns-Dk-Z}
  The formulas given by the proof of Theorem~\ref{th:def-fns-Dk-Z}
  involve only one parameter: the function with constant value $\tau$.
  Thus if $\tau\in\ZZ$ these definitions of $\CF_k(X,\tau^\ZZ)$ and $\CF_k(X,\tau\ZZ)$
  are parameter-free.
\end{remark}

\begin{proof}
  Replacing $\tau$ by $1/\tau$ if necessary we may assume that
  $|\tau|<|1|$.
  Let $\chi_k(p)$ and $\mInt(h,p,\tau^*)$ be the formulas given by
  Proposition~\ref{pr:def-Dk} and Theorem~\ref{th:def-chunk} respectively.
  Consider the formula $\FORM{Z}_k(h,\tau^*)$ defined as
  \begin{displaymath}
    \forall p\, \big[\chi_k(p)\to \forall v\, \big( p\not\sqsubseteq v
      \to \exists q\, [q\not\sqsubseteq v\land \mInt(h,q,\tau^*)]\big)\big].
  \end{displaymath}
  Given $\tau\in K^\times$ with $|0|<|\tau|<|1|$ let $\tau^*\in\CF(X)$ be the
  function with constant value $\tau$. Let $h$ be any function in
  $\CF(X)$ and $D_h=\{x\in X\tq h(x)\in\tau^\ZZ\}$.

  Assume that $\CF(X)\models\FORM{Z}_k(h,\tau^*)$. For any point $p_0\in\Delta_k(X)$
  and any open ball $B_0\subseteq K^m$ centered at $p_0$ let $p=\delta_{p_0}$ and
  $v=\delta_{B_0^c}$ be the functions from Lemma~\ref{le:dirac}. Then $\CF(X)$
  satisfies $\chi_k(p)$ and $p\not\sqsubseteq v$; hence $\FORM{Z}_k(h,\tau^*)$ gives
  $q\in\CF(X)$ having a single zero $q_0$ with $q_0\in B$ and
  $h(q_0)\in\tau^\ZZ$. In other words $p_0$ belongs to the closure of
  $D_h$. By continuity we obtain $h(p_0)\in\tau^\ZZ$, hence
  $h\in\CF_k(X,\tau^\ZZ)$.

  Conversely assume that $h\in\CF_k(X,\tau^\ZZ)$. Take $p\in\CF(X)$ with
  $\CF(X)\models\chi_k(p)$. Then $p$ has a single zero $p_0$ and $p_0\in\Delta_k(X)$.
  For any $v\in\CF(X)$ with $\CF(X)\models p\not\sqsubseteq v$, by continuity, the
  set $V=\{v\neq0\}$ is a neighborhood of $p_0$ in $X$. Hence by
  Lemma~\ref{le:Sk-dense} there is a point $q_0\in V\cap W_k(X)$. In
  particular $q_0\in\Delta_k(X)$ by Property~\ref{py:Sk-Dk} and therefore $h(q_0)\in\tau^\ZZ$.
  By Remark~\ref{re:mdir-locdim} the assumptions on $\cK$ ensure that
  $X$ has arbitrarily many germs at $p_0$. Thus $\CF(X)\models\mInt(h,q,\tau^*)$
  with $q=\delta_{q_0}$ defined in
  Lemma~\ref{le:dirac} \ref{it:dirac-pt}, which shows that $\CF(X)\models\FORM{Z}_k(h,\tau^*)$.

  The proof of the second statement is similar. Replacing $\tau$ by
  $-\tau$ if necessary we may assume that $\tau\succ0$. Then let $\tau^*\in\CF(X)$ be
  the function with constant value $\tau$ and replace $\mInt(h,q,\tau^*)$
  by $\aInt(h,q,\tau^*)$ in $\FORM{Z}_k(h,\tau^*)$.
\end{proof}

\def\DIRroot{}
\def\href#1#2{}


\end{document}